\crefname{subsection}{Subsection}{Subsections}
\crefname{lemma}{Lemma}{Lemmas}
\crefname{proposition}{Proposition}{Propositions}
\crefname{corollary}{Corollary}{Corollaries}
\crefname{criterion}{Criterion}{Criteria}
\crefname{definition}{Definition}{Definitions}
\crefname{example}{Example}{Examples}
\crefname{exercise}{Exercise}{Exercises}
\crefname{conclusion}{Conclusion}{Conclusions}
\crefname{conjecture}{Conjecture}{Conjectures}
\crefname{summary}{Summary}{Summaries}
\crefname{axiom}{Axiom}{Axioms}
\crefname{problem}{Problem}{Problems}
\crefname{remark}{Remark}{Remarks}
\newcommand{\B}{\mathbb{B}}
\newcommand{\C}{\mathbb{C}}
\newcommand{\bH}{H}
\newcommand{\N}{\mathbb{N}}
\newcommand{\R}{\mathbb{R}}
\newcommand{\fB}{\mathfrak{B}}
\newcommand{\fD}{\mathfrak{D}}
\newcommand{\fM}{\mathfrak{M}}
\newcommand{\dst}{\displaystyle}
\newcommand{\scal}{\mbox{\bf{(}}}
\newcommand{\scar}{\mbox{\bf{)}}}
\newcommand{\dG}{\overset{\text{\tiny$\bullet$}}{G}}
\newcommand{\cH}{{\mathcal H}}
\newcommand{\cS}{{\mathcal S}}
\DeclareMathOperator*{\esssup}{ess\,sup}
\newcommand{\group}{G}
\newcommand{\abs}[1]{\left| #1 \right|}
\newcommand{\conj}[1]{\overline{#1}}
\newcommand{\pDiffToOrd}[2]{\partial_{#1}^{#2}}
\newcommand{\pDiffGrpTo}[1]{\mathfrak{D}_{#1}}
\newcommand{\pDiffGrpToOrd}[2]{\pDiffGrpTo{#1}^{#2}}
\newcommand{\CinftyOn}[1]{C^{\infty}(#1)}
\newcommand{\CinftyCSOn}[1]{C^{\infty}_{c}(#1)}
\newcommand{\SchwartzSpaceOn}[1]{\mathcal{S}(#1)}
\newcommand{\LpnormOver}[3]{\lVert #3 \rVert_{L^{#1}(#2)}}
\newcommand{\inProdIn}[3]{\left\langle #1, #2 \right\rangle_{#3}}
\newcommand{\pull}[1]{#1_{*}}
\newcommand{\FTEucl}{\mathscr{F}_{\R^{n}}}
\newcommand{\FTGrp}{\mathscr{F}_{\group}}
\newcommand{\HaarMeas}{\mu_{\group}}
\newcommand{\symbClassOn}[4]{S^{#1}_{#2,#3}(#4)}
\newcommand{\PsiDOHor}[4]{\Psi^{#1}_{#2,#3}(#4)}
\newcommand{\opRnOf}[1]{#1(t,D)}
\newcommand{\symbToGrp}[1]{#1^{\group}}
\newcommand{\symbToRn}[1]{#1^{\R^{n}}}
\newcommand{\adj}[1]{#1^{*}}
\newtheorem{theorem}{Theorem}[section]
\newtheorem{lemma}[theorem]{Lemma}
\newtheorem{proposition}[theorem]{Proposition}
\newtheorem{corollary}[theorem]{Corollary}
\theoremstyle{definition}
\newtheorem{definition}[theorem]{Definition}
\theoremstyle{remark}
\newtheorem{remark}[theorem]{Remark}
\numberwithin{equation}{section}
\begin{document}
\setcounter{page}{1}

\title[Global pseudo-differential operators on $G=(-1,1)^n$]{Global pseudo-differential operators on the Lie group $G= (-1,1)^n$}

\author[D. Cardona]{Duv\'an Cardona}
\address{
  Duv\'an Cardona:
  \endgraf
  Department of Mathematics: Analysis, Logic and Discrete Mathematics
  \endgraf
  Ghent University, Belgium
  \endgraf
  {\it E-mail address} {\rm duvanc306@gmail.com, duvan.cardonasanchez@ugent.be}
  }

\author[R. Duduchava]{Roland Duduchava }
\address{
  Roland Duduchava:
  \endgraf
  Institute of Mathematics
  \endgraf
  The University of Georgia
  \endgraf
  Tbilisi, Georgia
   \endgraf
 and
  \endgraf
  A. Razmadze Mathematical Institute
  \endgraf
  Tbilisi State University
  \endgraf
  Georgia
  \endgraf
  {\it E-mail address} {\rm roldud@gmail.com, r.duduchava@ug.edu.ge}
  }

\author[A. Hendrickx]{Arne Hendrickx }
\address{
 Arne Hendrickx
  \endgraf
  Department of Mathematics: Analysis, Logic and Discrete Mathematics
  \endgraf
  Ghent University, Belgium
  \endgraf
  {\it E-mail address:} {\rm arnhendr.Hendrickx@UGent.be}
  }

\author[M. Ruzhansky]{Michael Ruzhansky}
\address{
  Michael Ruzhansky:
  \endgraf
  Department of Mathematics: Analysis, Logic and Discrete Mathematics
  \endgraf
  Ghent University, Belgium
  \endgraf
 and
  \endgraf
  School of Mathematical Sciences
  \endgraf
  Queen Mary University of London
  \endgraf
  United Kingdom
  \endgraf
  {\it E-mail address} {\rm michael.ruzhansky@ugent.be, m.ruzhansky@qmul.ac.uk}
  }

\thanks{The authors are supported  by the FWO  Odysseus  1  grant  G.0H94.18N:  Analysis  and  Partial Differential Equations and by the Methusalem programme of the Ghent University Special Research Fund (BOF)
(Grant number 01M01021). Michael Ruzhansky is also supported  by EPSRC grant
EP/R003025/2. 
R. Duduchava is supported by the grant of the Shota Rustaveli Georgian National Science Foundation FR-19-676.
}

     \keywords{Pseudo-differential operators, Microlocal analysis, Index theory, $L^p$-Multipliers}
     \subjclass[2010]{35S30, 42B20; Secondary 42B37, 42B35}

\begin{abstract}  In this work we characterise the H\"ormander classes
$\symbClassOn{m}{\rho}{\delta}{\group,\textnormal{H\"or}}$ on the open
manifold $\group = (-1,1)^n$. We show that by endowing the open manifold
$\group = (-1,1)^n$ with a group structure, the corresponding global Fourier
analysis  on the group allows one to define a global notion of symbol on the
phase space $\group \times \R^n$. Then, the class of pseudo-differential
operators associated to the global H\"ormander classes
$\symbClassOn{m}{\rho}{\delta}{\group \times \R^n}$ recovers the H\"ormander
classes $\symbClassOn{m}{\rho}{\delta}{\group,\textnormal{loc}}$ defined by
local coordinate systems. The analytic and qualitative properties of the
classes $\symbClassOn{m}{\rho}{\delta}{\group \times \R^n}$ are presented
in terms of the corresponding global symbols. In particular, $L^p$-Fefferman
type estimates and Calder\'on-Vaillancourt theorems are analysed, as well as the spectral properties of the operators.
\end{abstract}

\maketitle

\tableofcontents
\allowdisplaybreaks

\section{\bf Introduction}
The open set $G=(-1,1)^n$ endowed with the operation $x+_{G}y=(x+y)/(1+x\cdot y)$ is a non-compact Lie group and in terms of the Fourier analysis associated to the group $(G,+_G),$  in this work we characterise the H\"ormander classes $\Psi^{m}_{\rho,\delta}(G,\textnormal{H\"or})$ of pseudo-differential operators on $G$ defined by local coordinate systems. The approach described here can be extended for instance, to any star-shaped open sub-set $\tilde{G}$ of $\R^n$ in view if the natural diffeomorphism $G\cong \tilde{G}$.  Indeed, in terms of the Fourier transform $\mathscr{F}_G$ on $G,$ to any continuous linear operator $A$ on $C^{\infty}(G)$ we associate a global distribution $\sigma_A $ on $G\times \R^{n}$ in such a way that the quantisation of the symbol $\sigma_A$ gives the operator according to the formula
\begin{equation}
\forall f\in C^{\infty}(G),\,\,    Af=\mathscr{F}_{G}^{-1}[\sigma_A(x,\xi)[\mathscr{F}_{G}f]].
\end{equation}Then, when the distribution $\sigma_A$ agrees with a function on the phase space $G\times \R^n$, we analyse the properties of the operator $A$ in terms of the properties of the symbol $\sigma_A.$
Summarising the results of this manuscript, we have investigated:
\begin{itemize}
    \item Asymptotic expansions for the composition, the adjoint  and the parametrices (of elliptic operators) for the global H\"ormander classes $\Psi^{m}_{\rho,\delta}(G\times \R^n).$
    \item The mapping properties of the classes $\Psi^{m}_{\rho,\delta}(G\times \R^n)$ on $L^p$-spaces on $G.$ With $p\neq 2$ we prove a $L^p$-Fefferman type theorem, and with $p=2$ we obtain the Calder\'on-Vaillancourt for these classes.
    \item We prove the corresponding Gohberg lemma for a suitable sub-class of the family  $\Psi^{0}_{\rho,\delta}(G\times \R^n)$ and the chracterisation of compact operators on $L^2(G)$ is established.
    \item We prove the Atiyah-Singer-Fedosov index formula in our setting. Indeed, we prove for the Shubin class of elliptic operators of order zero the index formula:
    \begin{equation}\label{Index:A:Intro}
     \textnormal{ind}[A]=-\frac{(n-1)!}{(-2\pi i)^{n}(2n-1)!}\int\limits_{\partial{B}}\textnormal{Tr}[a^{-1}(x,\xi)da(x,\xi)]^{2n-1}.
 \end{equation}The left-hand side in \ref{Index:A:Intro} is the Fredholm index and the right hand side is the ``winding number" of $a.$
 \item Other spectral properties for the pseudo-differential calculus on $G=(-1,1).$
\end{itemize}

In the case of the torus $\mathbb{T}^n=[-1,1]^n,$ $1\sim -1,$ the global characterisation of the H\"ormander classes $\Psi^{m}_{\rho,\delta}(\mathbb{T}^n,\textnormal{H\"or})$ was done by MacLean in \cite{Mc}. Also, an alternative proof for this fact was done in \cite{Ruz} using a  periodisation  technique compatible with a global notion of symbol on the phase space $\mathbb{T}^n\times \mathbb{Z}^n,$ (instead of the phase space  $G\times \R^n=(-1,1)^n\times \R^n$). For the  spectral and the analytical properties (and their applications) of the pseudo-differential calculus on the torus we refer the reader to  \cite{ag},  \cite{Duvan2}, \cite{Duvan3}, \cite{Duvan4}, \cite{KumarCardona2}, \cite{Ournote}, \cite{CRS2018},  \cite{Profe},  \cite{s1}, \cite{m}, \cite{tur}, \cite{Ruz-2}, and, mainly, the  reference \cite{Ruz}.

The construction of pseudo-differential operators using the Lie group approach as in this work is parallel to the pseudo-differential theories in \cite{Ruz}, \cite{FischerRuzhanskyBook} where a global notion of symbol on the phase space $\mathbb{G}\times \widehat{\mathbb{G}}$  has been consistently developed, with $\mathbb{G}$ being a Lie group with a good Fourier analysis induced by its unitary dual $\widehat{\mathbb{G}}$. Even, generalising the global quantisation  from  the torus  \cite{Ruz-2} to any compact Lie group as well as their applications many results were derived in the last years. Indeed, the applications of the global quantisation on compact Lie groups, its analytical and spectral properties as well as their applications for the analysis of PDE, index theorems, regularisation of traces and other aspects of the geometric and harmonic analysis can be found e.g. in  \cite{Cardona5}, \cite{Cardona22}, \cite{Cardona6}, \cite{CdC2}, \cite{CardonaRuzhansky2017I}, \cite{CardonaRuzhansky2019I}, \cite{CardonaRuzhanskyCollectanea}, \cite{Subelliptic:calculus},  \cite{CardonaRuzhanskyMZ}, \cite{CDR21b}, \cite{CDRJevEq},  \cite{CDRMonaMath}, \cite{RuzhanskyDelgado2017}, \cite{DelRuzTrace1}, \cite{DelRuzTrace11}, \cite{DelRuzTrace111}, \cite{DelRuzTrace1111}, \cite{deMoraes}, \cite{RuzhanskyDelgadoCardona2019},  \cite{ARLG}, \cite{GarettoRuzhansky2015},  \cite{NurRuzTikhBesov2015}, \cite{NurRuzTikhBesov2017},     \cite{M1}, \cite{M2},  \cite{RodriguezRuzhansky2020}, \cite{Ruz}, \cite{Ruz-Tok}, \cite{ProfRuzM:TokN:20017}, \cite{RuzhanskyTurunenWirth2014}, \cite{RuzTurIMRN}, \cite{RuzhanskyWirth2014}, \cite{RuzhanskyWirth2015} and in the extensive list of references of these works.

On the other hand, the Fourier transform $\mathcal{F}_G$ and the convolution of functions $\varphi\ast_G\psi$ on the interval $G=(-1,1)$ (that corresponds to the one dimensional case; see below) were firstly  defined by Petrov in \cite{Pe06a,Pe06b} by using the diffeomorphism $x\;:\;\R\to G$ and its inverse $t\;:\;G\to\R$. In these papers and in \cite{SP20}, the defined Fourier transform $\mathcal{F}_G$ and convolution $\varphi\ast_G\psi$ were used for the investigation of convolution and differential equations, such as Prandtl, Tricomi, Lavrentjev-Bitsadze integral and integro-differential equations, Laplace-Beltrami equation on the sphere and some other equations from the Mathematical Physics. The  authors in \cite{Pe06a,Pe06b,SP20} did not utilise  the group structure of $G=(-1,1)$. However, the group structure of $G=(-1,1)$ was used in \cite{Du22} for the intrinsic definition of the Haar measure $d_G x$, the Fourier transform $\mathcal{F}_G$ and the Fuchs-type differential operator $\mathfrak{D}$ (see below). These tools enabled one to define in \cite{Du22} the Bessel potential-type spaces, to prove theorems on multipliers for convolution operators and derive more precise results for the above mentioned convolution equations and other applications to similar models arising from the mathematical physics. 

This work will be dedicated to the consistent development of the global pseudo-differential calculus on $G=(0,1).$ In particular, the H\"ormander classes on $G$ as defined in \cite{Hormander1985III}, will be characterised by this approach. This work is organised as follows:
\begin{itemize}
    \item In Section \ref{Pre} we present the basics on the H\"ormander pseudo-differential calculus, we also  present the topics related with the Fourier analysis on the group $G,$ and the definition of the $L^p$-Sobolev spaces in this setting. 
    \item In Section  \ref{HClassesG} we introduce the global classes of pseudo-differential operators on $G,$ the corresponding  quantisation formula and then we prove that these classes are stable under compositions, adjoints, and the  construction of parametrices. 
    \item In Section \ref{mapping:properties} we study the mapping properties of these classes, we establish the Calder\'on-Vaillancourt theorem, the G\r{a}rding inequality and the Fefferman-Phong inequality. 
    \item In Section \ref{Spectral:1}  we investigate the spectral properties of the pseudo-differential calculus and in particular, the Atiyah-Singer-Fedosov theorem is proved in this setting. 
    \item Finally in Section \ref{Spectral:2} the Fredholmeness of the pseudo-differential operators is analysed as well as other mapping properties on $L^p$-Sobolev spaces. In this last section the results are derived  from their corresponding analogues for Fourier multipliers.
\end{itemize}

\section{\bf Preliminaries}\label{Pre}
In this section we provide the preliminaries used in this work about the theory of pseudo-differential operators. This theory will be introduced for open subsets of the Euclidean space.  To do this we follow H\"ormander \cite{Hormander1985III}. The Lie structure of $G=(-1,1)^n$ and its Fourier analysis  are discussed here as well as, the function spaces on $G$ of interest for this work are defined. For the aspects about the Lie theory we follow \cite{Ruz}.

\subsection{H\"ormander classes on open sets of \texorpdfstring{$\R^n$}{Rn}}

Let us  introduce the H\"ormander classes starting with the definition in the Euclidean setting. 
\begin{definition}[Pseudo-differential operators on Euclidean open sets]
Let $U$ be an open  subset of $\R^n$ such that $U \neq \emptyset$ and $U \neq \R^n.$ We say that the ``symbol" $a \in \CinftyOn{U \times \R^n, \C}$ belongs to the H\"ormander class of order $m$ and of $(\rho,\delta)$-type, denoted by
$  \symbClassOn{m}{\rho}{\delta}{U,\textnormal{H\"or}},$  
where $0\leqslant \rho,\delta \leqslant 1,$ if for every compact subset $K \subset U$ and for all $\alpha,\beta \in \N_0^n$, the symbol inequalities
\begin{equation}\label{Hormander:classes:U}
  \abs{\pDiffToOrd{x}{\beta} \pDiffToOrd{\xi}{\alpha} a(x,\xi)} \leqslant C_{\alpha,\beta,K}(1+\abs{\xi})^{m-\rho\abs{\alpha}+\delta\abs{\beta}},
\end{equation} hold true uniformly in $x \in K$ for all $\xi\in \R^n$. Then, a continuous linear operator $A : \CinftyCSOn{U} \rightarrow \CinftyOn{U}$
is a pseudo-differential operator of order $m$ of $(\rho,\delta)$-type, if there exists a symbol $a \in S^{m}_{{\rho},{\delta}}{(U,\textnormal{H\"or})}$ such that
\begin{equation*}
 \forall x\in \mathbb{R}^n,\,\,   Af(x) = \int\limits_{\R^{n}}{e^{2\pi i x \cdot \xi} \, a(x,\xi) \, (\FTEucl{f})(\xi)}d{\xi},
\end{equation*} for all $f \in \CinftyCSOn{U},$ where
$$
 \forall\xi\in \mathbb{R}^n,\,\,   (\FTEucl{f})(\xi) := \int\limits_{\mathbb{R}^n}{e^{-2 \pi i x\cdot \xi} \, f(x)}d{x}
$$ is the Euclidean Fourier transform of $f$ at $\xi \in \R^n,$ and $U$ is identified with $\mathbb{R}^n$.
\end{definition}
\begin{remark}[Pseudo-differential operators on $\R^n$] In the specific case where $U$ is the whole space $\R^n,$ the conditions of the uniform estimates in \eqref{Hormander:classes:U} on compact subsets on $\R^n$ are removed and the following symbol estimates
\begin{equation}\label{Hormander:classes:U2}
  \abs{\pDiffToOrd{x}{\beta} \pDiffToOrd{\xi}{\alpha} a(x,\xi)} \leqslant C_{\alpha,\beta}(1+\abs{\xi})^{m-\rho\abs{\alpha}+\delta\abs{\beta}},
\end{equation} for functions $a \in \CinftyOn{\R^{n} \times \R^{n}}$ define the symbol class $\symbClassOn{m}{\rho}{\delta}{\R^n \times \R^n} := \symbClassOn{m}{\rho}{\delta}{\R^n;\textnormal{H\"or}}$.
\end{remark}

\subsection{The group \texorpdfstring{\(\group\)}{group}}

Here we explain how we can endow \((-1,1)^{n}\) with an intrinsic group structure. The general idea is that if we have a bijection from a group to a set, we can force this bijection to become an isomorphism by defining a group operation on the set by this condition.
\begin{remark}\label{Remark:1}
Suppose \((G,\cdot)\) is an arbitrary group, \(H\) is a set and \(\theta : G \to H\) is a bijection. Then we can endow \(H\) with a group structure such that \(\theta\) becomes a group isomorphism between \(G\) and \(H\). Since \(\theta\) is a bijection, we can write every element \(h \in H\) as \(h = \theta(g)\) for some \(g \in G\). We define the group operation \(\cdot_{H}\) on \(H\) by
\[\theta(g_{1}) \cdot_{H} \theta(g_{2}) := \theta(g_{1} \cdot g_{2}) \in H.\]
The neutral element in \(H\) is \(1_{H} := \theta(1)\), and one easily finds that
\[\theta(g) \cdot_{H} \theta(1) = \theta(g) = \theta(1) \cdot_{H} \theta(g)\]
and
\[\theta(g) \cdot_{H} \theta(g^{-1}) = \theta(1) = \theta(g^{-1}) \cdot_{H} \theta(g).\]
Similarly, the associativity of \(\cdot_{H}\) follows from the associativity of \(\cdot\) in \(G\). We thus find indeed that \((H,\cdot_{H})\) is a group and \(\theta\) is an isomorphism by the definition of \(\cdot_{H}\).
\end{remark}
\begin{remark}[The Lie group structure on $G=(-1,1)^n$]
We now apply Remark \ref{Remark:1} to the particular case of the bijection
\begin{equation}\label{x:mapping}
    x : \R^{n} \to (-1,1)^{n} : t \mapsto x(t) := (-\tanh{t_{1}}, \cdots, -\tanh{t_{n}}),
\end{equation} where
\begin{equation*}
   \tanh(t):=\frac{\sinh(t)}{\cosh(t)}=\frac{e^{t}-e^{-t} }{e^{t}+e^{-t}},\,t\in \mathbb{R}. 
\end{equation*}
Note that the minus sign is only a matter of convention. We thus endow \(\group = (-1,1)^{n}\) with the group operation
\[(-\tanh{t_{1}}) +_{\group} (-\tanh{t_{2}}) := -\tanh(t_{1} + t_{2}) = \frac{(-\tanh{t_{1}}) + (-\tanh{t_{2}})}{1 + (-\tanh{t_{1}})(-\tanh{t_{2}})}.\]
Since we dispose of an \emph{addition formula} for the hyperbolic tangent function, we can write the group operation more neatly by setting \(x := -\tanh{t_{1}}\) and \(y := -\tanh{t_{2}}\) so that we get
\[x +_{\group} y = \frac{x+y}{1+xy}.\]
This endows \(\group = (-1,1)^{n}\) with a Lie group structure such that \(x\) is an isomorphism, whose inverse is given by
\begin{equation}\label{t:mapping}
    t : (-1,1)^{n} \to \R^{n} : x \mapsto t(x) := \left(\frac{1}{2} \ln\left(\frac{1-x_{1}}{1+x_{1}}\right), \dots, \frac{1}{2} \ln\left(\frac{1-x_{n}}{1+x_{n}}\right)\right).
\end{equation}
These isomorphisms \(x\) and \(t\) will play a very important role in the rest of this paper as they provide a canonical translation of properties of the Lie group \(\R^{n}\) to the corresponding properties on \(\group = (-1,1)^{n}\). Then, $G$ has now a natural structure of a Lie group.
\end{remark}

\subsection{Function spaces on \texorpdfstring{$\group = (-1,1)^n$}{group}}\label{subsection:funcSpaces}
We continue the idea that the isomorphisms \(x\) and \(t\) translate properties and concepts of \(\R^n\) to \(\group\) by constructing function spaces on \(\group\). To this end it will be convenient to consider the pull-backs \(\pull{t}\) and \(\pull{x}\).

\begin{definition}[The pull-backs  $t_*$ and $x_*$]
The pull-back $\pull{t}f$ of a (measurable) function $f : \R^n \to \C$, with $t$ as in \eqref{t:mapping}, is defined via
\begin{equation}
    \pull{t}f := f\circ t: \group \to \C.
\end{equation}
Similarly, if \(f : \group \to \C\) then \(\pull{x}f(t) := f(x(t))\). 
\end{definition}

\begin{remark}
These pull-backs will allow one to switch between function spaces on \(\R^{n}\) and \(\group\) as we will show now.

If \(f \in \CinftyOn{\R^n}\) then \(\pull{t}f : \group = (-1,1)^n \to \C\) is a smooth function as a composition of smooth functions. Conversely, for a smooth function \(f : \group \to \C\) we have that \(\pull{x} f\) is a smooth function on \(\R^n\). Since \(\pull{t}\) and \(\pull{x}\) are inverse bijections, this establishes a canonical bijective correspondence given by \(\CinftyOn{\group} = \pull{t}(\CinftyOn{\R^n})\).
\end{remark}

\begin{definition}[The natural distance on $G$]
Define the distance \(d_{\group} : \group \times \group \to [0,\infty)\) by
\(d_{\group}(x,y) := \abs{t(x) - t(y)}\). Note that this definition is chosen in a such a way that \(t : G \to \R^{n}\) and \(x : \R^{n} \to G\) become isometries. As a consequence, this endows \(G\) with a topology, which is homeomorphic with the Euclidean topology. For this topology it can easily be checked that we have a bijective correspondence between the spaces of compactly supported smooth functions on \(\R^n\) and \(\group\) given by \(\CinftyCSOn{\group} = \pull{t}(\CinftyCSOn{\R^{n}})\).
\end{definition}
\begin{definition}[Canonical vector fields on $G$]
Consider \(f \in \CinftyOn{\R^n}\) and \(g \in \CinftyOn{\group}\). We use the expressions \eqref{t:mapping} and \eqref{x:mapping} to compute that
\[\frac{\partial t_{k}}{\partial x_{j}}(x) = -\delta_{jk} \frac{1}{(1-x_{k}^{2})} \qquad \textnormal{and} \qquad \frac{\partial x_{k}}{\partial t_{j}}(t) = -\delta_{jk} (1-x_{k}^{2}),\]
where \(\delta_{jk}\) is the Kroneker delta, which is \(1\) if \(j=k\) and \(0\) otherwise.
A change of variables then leads to the formulas
\begin{equation}\label{eq:cov1}
    \frac{\partial (\pull{t}f)}{\partial x_{j}}(x) = \sum_{k=1}^{n} \frac{\partial f}{\partial t_{k}}(t(x)) \frac{\partial t_{k}}{\partial x_{j}}(x) = - \frac{1}{(1-x_{j}^{2})} \frac{\partial f}{\partial t_{j}}(t(x))
\end{equation}
and
\begin{equation}\label{eq:cov2}
    \frac{\partial (\pull{x}g)}{\partial t_{j}}(t) = \sum_{k=1}^{n} \frac{\partial g}{\partial x_{k}}(x(t)) \frac{\partial x_{k}}{\partial t_{j}}(t) = - (1-x_{j}^{2}) \frac{\partial g}{\partial x_{j}}(x(t)).
\end{equation}
These equations motivate the introduction of the partial differential operators
\[\pDiffGrpTo{x_{j}} := -(1-x_{j}^{2}) \frac{\partial}{\partial x_{j}}\]
for \(1 \leqslant j \leqslant n\). Note that \eqref{eq:cov1} can be rewritten as
\[\pDiffGrpTo{x_{j}} (\pull{t}f)(x) = \frac{\partial f}{\partial t_{j}}(t(x)) = \pull{t} \frac{\partial f}{\partial t_{j}}(x).\]
Since this identity is true for all \(x \in G\) and all \(f \in \CinftyOn{\R^{n}}\), we have the equality of operators
\begin{equation}\label{eq:derivativeTransform1}
    \pull{t} \frac{\partial}{\partial t_{j}} = \pDiffGrpTo{x_{j}} \pull{t}.
\end{equation}
This equality shows that the operators \(\pDiffGrpTo{x_{j}}\) for \(1 \leq j \leq n\) are the natural partial differential operators on \(G\) since \(\pull{t}\) in a sense transforms \(\frac{\partial}{\partial t_{j}}\) to \(\pDiffGrpTo{x_{j}}\). Similarly, it follows from \eqref{eq:cov2} that
\begin{equation}\label{eq:derivativeTransform2}
    \pull{x} \pDiffGrpTo{x_{j}} = \frac{\partial}{\partial t_{j}} \pull{x}.
\end{equation}
\end{definition}
\begin{definition}[Canonical partial differential operators on $G$] Before we proceed, let us introduce some common notation. For multi-indices \(\alpha, \beta \in \N_{0}^{n}\) we write
\[\pDiffToOrd{t}{\alpha} := \pDiffToOrd{t_{1}}{\alpha_{1}} \pDiffToOrd{t_{2}}{\alpha_{2}} \cdots \pDiffToOrd{t_{n}}{\alpha_{n}} \qquad \textnormal{and} \qquad \pDiffGrpToOrd{x}{\alpha} := \pDiffGrpToOrd{x_{1}}{\alpha_{1}} \pDiffGrpToOrd{x_{2}}{\alpha_{2}} \cdots \pDiffGrpToOrd{x_{n}}{\alpha_{n}}.\]
We denote the length of the multi-index \(\alpha \in \N_{0}^{n}\) by
\[\abs{\alpha} := \alpha_{1} + \alpha_{2} + \cdots + \alpha_{n}.\]
\end{definition}


Now we introduce the space of Schwartz functions on \(\group\).
\begin{remark}[The Schwartz class on $G$]
Let \(f \in \SchwartzSpaceOn{\R^{n}}\). Then for every two multi-indices \(\alpha, \beta \in \N_{0}^{n}\), we consider the seminorm
\[\sup_{t \in \R^{n}} \abs{t^{\alpha} \pDiffToOrd{t}{\beta}f(t)} < \infty.\]
Observe that we can rewrite this seminorm using \eqref{eq:derivativeTransform1} as
\[\sup_{t \in \R^{n}} \abs{t^{\alpha} \pDiffToOrd{t}{\beta}f(t)} = \sup_{x \in \group} \abs{t(x)^{\alpha} \pDiffToOrd{t}{\beta}f(t(x))} = \left(\frac{1}{2}\right)^{\abs{\alpha}} \sup_{x \in \group} \abs{\left(\ln \frac{1-x}{1+x}\right)^{\alpha} \pDiffGrpToOrd{x}{\beta}\pull{t}f(x)}.\]
The latter expression can thus be used for the seminorms defining the Schwartz space \(\SchwartzSpaceOn{G}\).

Note that \(\ln\frac{1-y}{1+y}\) is asymptotically equivalent to \(\ln(1-y^{2})\) on \((-1,1)\) (with the topology induced by \(-\tanh\)) because
\[\frac{\ln\frac{1-y}{1+y}}{\ln(1-y^{2})} = 1 - \frac{\ln(1+y^{2})}{\ln(1-y^{2})} \xrightarrow{y \to \pm 1} 1.\]
Hence, we may replace the factor \(\left(\ln\frac{1-x}{1+x}\right)^{\alpha}\) in the expression for the seminorms by \({[\ln(1-x^{2})]^{\alpha}}\). 
We thus define the Schwartz space on \(\group\) to be the space of all smooth functions \(f\) on \(G\) such that the seminorms
\[\sup_{x \in \group} \abs{{[\ln(1-x^2)]^{\alpha}} \pDiffGrpToOrd{x}{\beta}f(x)} < \infty.\]
Clearly, we also have that \(\SchwartzSpaceOn{G} = \pull{t}(\SchwartzSpaceOn{\R^n})\), as we expected.
\end{remark}

\begin{remark}[Lebesgue spaces on $G$]
Finally, we construct the Lebesgue spaces on \(G\). Let \(f \in L^{p}(\R^{n})\) for \(0 < p \leqslant \infty\). If \(p < \infty\), we find using a change of variables that
{\[\LpnormOver{p}{\R^{n}}{f}^{p} = \int\limits_{\R^{n}}{\abs{f(t)}^{p}}dt= \int\limits_{(-1,1)^{n}}\frac{\abs{f(t(x))}^{p}\mathrm{d}x}{(1-x_{1}^{2}) \cdots (1-x_{n})^{2}}.\]}
This shows that a natural measure on \(G = (-1,1)^n\) is given by
\[\mathrm{d}\HaarMeas(x) := \frac{\mathrm{d}x}{(1-x_1^2)(1-x_2^2) \cdots (1-x_n^2)},\]
where \(\mathrm{d}x\) is the Lebesgue measure on \(\R^n\). This indeed happens to be the Haar measure on \(G\). 
Thus, \(L^{p}(G) = \pull{t}(L^{p}(\R^n))\), so that \(\pull{t}\) is an \(L^{p}\)-isometry, and the corresponding \(L^{p}\)-norm of an element \(f \in L^{p}(G)\) is given by
\[\LpnormOver{p}{G}{f} = \left(\int\limits_{G}
     {\abs{f(x)}^{p}}{{d\mu_{G}(x)}}
     \right)^{\frac{1}{p}}.\]
Moreover, since \(\pull{t}\) is an \(L^{2}\)-isometry (or by a direct computation) we find for all \(f,g \in L^{2}(\R^n)\) that
\begin{equation}\label{eq:innerProducts}
    \inProdIn{f}{g}{L^{2}(\R^n)} = \int\limits_{\R^{n}}{f(t) \, \conj{g(t)}}{{dt}} = \int\limits_{\group}{f(t(x)) \, \conj{g(t(x))}}{d\HaarMeas(x)} = \inProdIn{\pull{t}f}{\pull{t}g}{L^{2}(\group)}.
\end{equation}
For the case \(p = \infty\) we similarly find that \(L^{\infty}(G) = \pull{t}(L^{\infty}(\R^n))\) with norm
\[\LpnormOver{\infty}{G}{f} = \esssup_{x \in G} \abs{f(x)}.\]
Note that the inner product on $L^2(G)$ is given by
\begin{equation}
    (u,v)_{L^2(G)}=\int\limits_{(-1,1)^n}\frac{u(x)\overline{v(x)}\mathrm{d}x}{(1-x_1^2)(1-x_2^2) \cdots (1-x_n^2)}.
\end{equation}
\end{remark}

\subsection{Global Fourier analysis on \texorpdfstring{$\group = (-1,1)^n$}{group=(-1,1)n}} The Euclidean Fourier transform induces  the  group Fourier transform $\mathcal{F}_{G}$ on $G.$ We observe it in the following remark.
\begin{remark}\label{FT:Remark}
Let \(f \in \SchwartzSpaceOn{\group}\). Then \(\pull{x}f \in \SchwartzSpaceOn{\R^{n}}\) and we have

\begin{align*}
&\FTEucl[\pull{x}f](\xi)\\
&= \int\limits_{\R^{n}}{e^{-2\pi i t \cdot \xi} \, \pull{x}f(t)}{dt}
     =\int\limits_{(-1,1)^n} \frac{e^{-2\pi it(x)\cdot\xi} \, f(x) \, \mathrm{d}x}{(1-x_{1}^{2}) \cdots (1-x_{n}^{2})}=\int\limits_{(-1,1)^n}e^{-2 \pi it(x)\cdot\xi}\,f(x)\, \mathrm{d}\mu_G(x)\\
     &=:(\mathscr{F}_{G}f)(2\xi),\,\xi\in \mathbb{R},
\end{align*}
where we applied the change of variables \(x = x(t)\). We thus find that
\begin{equation}\label{eq:FTToGrp}
    \FTEucl[\pull{x}f]\left(\frac{\xi}{2}\right) = \int\limits_{\group}{\prod_{j=1}^{n} \left(\frac{1-x_{j}}{1+x_{j}}\right)^{-i \pi \xi_{j}} f(x)}{d\HaarMeas(x)} = \int\limits_{\group}{\left(\frac{1-x}{1+x}\right)^{-i \pi \xi} f(x)}{d\HaarMeas(x)},
\end{equation}
with the multilinear notation
\[\left(\frac{1-x}{1+x}\right)^{-i \pi \xi} := \left(\frac{1-x_{1}}{1+x_{1}}\right)^{-i \pi \xi_{1}} \cdots \left(\frac{1-x_{n}}{1+x_{n}}\right)^{-i \pi \xi_{n}}.\]
\end{remark}
\begin{definition}[Group Fourier transform]
In view of Remark \ref{FT:Remark},  we define the Fourier transform on \(\group\) of a function \(f \in \SchwartzSpaceOn{\group}\) by
\begin{equation}
   \FTGrp[f](\xi) := \int\limits_{\group}{\left(\frac{1-x}{1+x}\right)^{-i\pi \xi} f(x)}{d\HaarMeas(x)},\qquad {\xi\in\R^n}.
\end{equation}
We can provide similar motivation for defining the inverse Fourier transform as (\cite{Du22})
\begin{equation}\label{FIF}
    \FTGrp^{-1}[f](x):=
     \int\limits_{\R^n}{\left(\frac{1-\xi}{1+\xi}\right)^{i\pi x}
     f\left(\frac{\xi}{2}\right)}d\xi, \qquad x\in\group\,.
\end{equation}
\end{definition}
\subsection{Sobolev and Bessel potential spaces on  $\group$}

For a function of polynomial growth at infinity $|a(x)|\leqslant C\langle
x\rangle^N:=C(1+|x|^2)^{N/2}$ for some constant $C>0$ and some integer $N\in\N_0$, by
$a_{\R^n}(D)$ we denote the Fourier convolution operator on the Euclidean
space $\R^n$, defined by
 \[
a_{\R^n}(D)\varphi(t):=\FTEucl^{-1}a\FTEucl\varphi(t), \qquad
     t\in\R^n,\quad \varphi\in\mathcal{S}(\R^n),
 \]
and $a(\xi)$ is called its symbol (cf. \cite{Du79}, where one used a
different notation $W^0_a=a_{\R^n}(D)$). Let $\Delta_{\mathbb{R}^n}$ be the standard Laplacian on $\mathbb{R}^n,$ given by
\begin{equation}
    \Delta_{\mathbb{R}^n}=-\sum_{j=1}^n\partial^2_{x_j}.
\end{equation}
The operator $ \Delta_{\mathbb{R}^n}$ admits a self-adjoint extension on $L^2(\R^n).$
\begin{definition}[Sobolev spaces on $\mathbb{R}^n$]
The notation $H^s_p(\R^n)$  with $s\in\R,\;1\leqslant p\leqslant\infty,$ refers to the Bessel potential space on the Lie group (Euclidean space) $\R^n$, which represents the closure of the Schwartz space $\mathcal{S}(\R^n)$ with respect of the norm
 \begin{eqnarray}\label{e6.1}
\|f\,\|_{{H}_p^s(\R^n)} \,:=\, \|(1-\Delta_{\mathbb{R}^n})^{\frac{s}{2}}f\|_{ {L}^p(\R^n)}<\infty.
 \end{eqnarray}
\end{definition}
\begin{remark}[Sobolev spaces on $L^2(\R^n)$]
We will apply the standard convention and for $p=2$ use the notation $H^s(\R^n)$ for the Hilbert space ${H}^s_2(\R^n)$, dropping the subscript index $p=2$. Due to the classical Parseval's equality for $\R^n$ the formula
 \begin{eqnarray}\label{e6.2}
\|f\|_{{H}^s(\R^n)}:=\left(\int\limits_{\R^n}\left|(1
     +|\xi|^2)^{s/2}\FTEucl\,f(\xi)\right|^2\,d\xi\right)^{1/2}
 \end{eqnarray}
provides an equivalent norm in the Hilbert space ${H}^s(\R^n)$.
\end{remark}
\begin{remark}
It is well known that a partial derivative $(\partial^\alpha\varphi)(t)=(a^\alpha(D)\varphi)(t)$ is a convolution operator and its symbol is $a(\xi):=(-i\xi)^\alpha$ for arbitrary multi-index $\alpha\in\N^n$:
\begin{eqnarray}\label{e6.3}
(\FTEucl\partial^\alpha\varphi)(\xi)=(-i\xi)^\alpha(\FTEucl
     \varphi)(\xi), \qquad \xi\in\R^n.
\end{eqnarray}
\end{remark}
\begin{remark}[Sobolev spaces of integer order]
For an integer $m\in\N$ and an arbitrary $1\leqslant p\leqslant\infty$ the
Bessel potential space coincides with the Sobolev space
${H}^m_p(\R^n)={W}_p^m (\R^n)$ and an equivalent  norm
is defined as follows (cf. \cite{Tr95})
 \begin{eqnarray*}
\| f\,\|_{{W}_p^m(\R^n)}:=\left(\sum\limits_{|\alpha|\leqslant
   m}\|\partial^\alpha f \|_{L^p(\R^n)}\right)^{1/p}
 \end{eqnarray*}
with the usual ${\rm ess}\,\sup$-norm modification for $p=\infty$:
 \[
\|f\,\|_{{W}_\infty^m(\R^n)}:=\,\sum\limits_{|\alpha|\leqslant
     m}{\rm ess}\,\sup\limits_{t\in\R^n}|\partial^\alpha f(t )|\, .
 \]
\end{remark}
\begin{definition}[The class of $L^p(\R^n)$-multipliers] For $1\leqslant p\leqslant\infty$ by $\mathfrak{M}_p(\R^n)$ we denote
the set of all symbols $a(\xi)$, $\xi\in\R^n$, for which the convolution
operator $a_{\R^n}(D)\;:\;\mathcal{S}(\R^n)\to\mathcal{S}^\prime(\R^n)$ extends to a bounded operator
 \[
a_{\R^n}(D)\;:\;L^p(\R^n) \rightarrow L^p(\R^n).
 \]
Symbols in the class  $\mathfrak{M}_p(\R^n)$ are called $L^p$-multipliers. 
\end{definition}
\begin{remark}
Note that $\mathfrak{M}_p(\R^n)$ is a Banach algebra with the induced operator norm
$\|a_{\R^n}(D)\|_{\mathscr{B}(L^p(\R^n)}$, because the composition of
operators from this class has the property  $$a_{\R^n}(D)b_{\R^n}(D)=(ab)_{\R^n}(D).$$ 
For a function of polynomial growth $a(\xi)$, $\xi\in\R^n,$ the convolution
operator $a_\group(\fD)$ on the group $G$ is defined in a standard way:
 \[
a_\group(\fD)\varphi(x):=\FTGrp^{-1}a\FTGrp\varphi(x), \qquad
     x\in\group=(-1,1)^n,\quad \varphi\in\mathcal{S}(\group),
 \]
and $a(\xi)$ is called its symbol (cf. \cite{Du22}, where is used a
different notation $W^0_{a,G}=a_G(\fD)$).
\end{remark}
\begin{remark}[$W^0_{a,G}=a_G(\fD)$] We adopt the notation $W^0_{a,G}=a_G(\fD)$ through this work. We emphasize the use of this notation in Section \ref{Spectral:2}, specially if the reader is familiar with the references \cite{Du76,Du79}.
\end{remark}
\begin{remark}[Partial derivatives on $G$]
Note that according to \eqref{eq:derivativeTransform1} and
\eqref{eq:derivativeTransform2}   the counterpart of the derivative
$\partial^\alpha=\partial_1^{\alpha_1}\cdots\partial_n^{\alpha_n}$ on
$\R^n$ is the derivative $\fD^\alpha_\group:=\fD_1^{\alpha_1}
\cdots\fD_n^{\alpha_n}$ on the Lie group $\group $ and both of them
are convolution operators with the same symbols $(-i\xi)^\alpha
=(-i\xi_1)^{\alpha_1}\cdots(-i\xi_n)^{\alpha_n}$.
\end{remark}
\begin{remark}[The Laplacian on $G$]
Note that in view of \eqref{eq:derivativeTransform2}, the Laplacian on $G$ is given by
\begin{equation}
    \Delta_{G}=-\sum_{j=1}^n  \pDiffGrpTo{x_{j}}^2.
\end{equation}
The operator $\Delta_{G}$ admits a self-adjoint extension on $L^2(G).$ 
\end{remark}
\begin{definition}[Sobolev spaces on $G$]
Arguing as in the Euclidean case, the notation ${H}^s_p(\group)$  with $s\in\R,\;1
\leqslant p\leqslant\infty$ refers to the Bessel potential space on
the Lie group $\group$, where the norm is defined as follows (cf. \eqref{e6.1}
 \begin{eqnarray}\label{e6.4}
\|f\,\|_{{H}_p^s(\group)} \,:=\, \|(1-\Delta_G)^{\frac{s}{2}}f
    \|_{L^p(\group)}<\infty.
 \end{eqnarray}
\end{definition}

\begin{remark}[Sobolev spaces on $L^2(G)$]
We follow the standard notation and we write ${H}^s(\group)$ for the space
${H}^s_2(\group)$ in the case $p=2$. Due to the Parseval's equality for the group $G$
 \begin{eqnarray}\label{e6.5}
\scal\FTGrp\varphi,\FTGrp\psi\scar_{L^2(\group)}=\int\limits_{\R^n}(\FTGrp
     \varphi)(\xi)\overline{(\FTGrp\psi)(\xi)}d\xi \nonumber=\pi^n\int\limits_G\varphi(y)\overline{\psi(y)}d\mu_\group y
     =:\pi^n\scal\varphi,\psi\scar_{L^2(\group)}
 \end{eqnarray}
the following
 \begin{eqnarray*}
\|f\,\|_{{H}^s(\group)}:=\left(\int\limits_{\mathbb{R}^n}(1
    +|\xi|^2)^{s}|[\FTGrp\,f](\xi)|^2\,d\xi\right)^{1/2}
 \end{eqnarray*}
defines an equivalent norm on ${H}^s(\group)$.
\end{remark}

\begin{remark}[Sobolev spaces of integer order]
For an integer $s=m\in \mathbb{N}$ and arbitrary $1\leqslant p\leqslant\infty$
the Bessel potential space coincides with the Sobolev space
${H}^m_p(\group)={W}^m_p(\group)$
and an equivalent  norm is defined as follows
 \[
\| f\|_{{W}_p^m(\group)}:=\,\left(
    \sum\limits_{|\alpha|\leqslant m}\|\fD^\alpha_\group f\|_{{L}^p(G)}^p\right)^{1/p},
 \]
for $1\leqslant p<\infty$, while for $p=\infty$ is used the usual ${\rm ess}\,\sup$-norm modification:
 \[
\|f\,\|_{{W}_\infty^m(\group)}:=\,\sum\limits_{|\alpha|
\leqslant m}{\rm ess}\,\sup\limits_{x\in\R^n}|\fD^\alpha_\group f(x)|\, .
 \]
\end{remark}
We remind that $\mathscr{B}(\fB_1,\fB_2)$ denotes the space of all linear bounded operators $T:\fB_1\rightarrow\fB_2$ between the Banach spaces $\fB_1$ and $\fB_2.$

The next Proposition \ref{p6.1} is proved in \cite{Du22} for $n=1$, but the proof for arbitrary $n$ is similar.
 %
\begin{proposition}\label{p6.1} For arbitrary $1\leqslant p\leqslant\infty$ and $s\in\R$ a convolution operator
 \[
a_{\R^n}(D)\;:\;{H}_p^s(\R^n) \rightarrow{H}_p^s(\R^n).
 \]
is bounded if and only if the operator
 \[
a_\group(\fD)\;:\;{H}_p^s(G) \rightarrow {H}_p^s(G)
 \]
is bounded and if and only if $a\in\mathfrak{M}_p(\R^n)$ (i.e. $a$ is an $ L^p $-multiplier).
 \end{proposition}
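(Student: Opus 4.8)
The plan is to prove the three-way equivalence by reducing every one of the three boundedness assertions to a single one, namely that $a_{\R^n}(D)$ is bounded on $L^p(\R^n)$ (which is the definition of $a\in\mathfrak{M}_p(\R^n)$), by transporting the operators through isometric isomorphisms under which they become conjugate. The backbone of the argument is that the pull-back $\pull{t}$ intertwines the Euclidean and the group Fourier calculi. Indeed, the change of variables in Remark \ref{FT:Remark} shows that $\FTGrp\pull{t}=\FTEucl$ on $\SchwartzSpaceOn{\R^n}$ (up to the normalisation chosen for $\FTGrp$), and hence $\FTGrp^{-1}=\pull{t}\,\FTEucl^{-1}$. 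For any symbol $b$ of polynomial growth this gives the conjugation
\begin{equation}\label{eq:intertwine}
    \pull{t}\,b_{\R^n}(D)=\pull{t}\,\FTEucl^{-1}b\,\FTEucl=\FTGrp^{-1}b\,\FTGrp\,\pull{t}=b_\group(\fD)\,\pull{t},
\end{equation}
first on $\SchwartzSpaceOn{\R^n}$ and then, by density, on every space on which the operators in question are defined.

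First I would promote the fact that $\pull{t}$ is an $L^p$-isometry (Subsection \ref{subsection:funcSpaces}) to the statement that it is an isometric isomorphism $\pull{t}:{H}^s_p(\R^n)\to{H}^s_p(\group)$ for all $s\in\R$ and $1\leqslant p\leqslant\infty$. Choosing $b(\xi)=(1+|\xi|^2)^{s/2}$ in \eqref{eq:intertwine} yields the intertwining of the Bessel potentials $(1-\Delta_G)^{s/2}\pull{t}=\pull{t}(1-\Delta_{\R^n})^{s/2}$, which for integer $s$ is already visible from the differential identity \eqref{eq:derivativeTransform1} and the definition $\Delta_G=-\sum_{j}\pDiffGrpTo{x_j}^2$. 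Combining this with the $L^p$-isometry property gives, for $f\in{H}^s_p(\R^n)$,
\[
    \|\pull{t}f\|_{{H}^s_p(\group)}=\|(1-\Delta_G)^{s/2}\pull{t}f\|_{L^p(\group)}=\|\pull{t}(1-\Delta_{\R^n})^{s/2}f\|_{L^p(\group)}=\|f\|_{{H}^s_p(\R^n)},
\]
so that $\pull{t}$ is the desired isometric isomorphism.

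Next I would read off the equivalence of the boundedness of $a_\group(\fD)$ on ${H}^s_p(\group)$ and of $a_{\R^n}(D)$ on ${H}^s_p(\R^n)$ directly from \eqref{eq:intertwine} with $b=a$: it says $a_\group(\fD)=\pull{t}\,a_{\R^n}(D)\,\pull{t}^{-1}$ as operators on ${H}^s_p(\group)$, and conjugation by the isometric isomorphism $\pull{t}$ preserves operator norms, so the two are bounded simultaneously. To close the chain I would show that boundedness of $a_{\R^n}(D)$ on ${H}^s_p(\R^n)$ is equivalent to boundedness on $L^p(\R^n)={H}^0_p(\R^n)$. Since $a_{\R^n}(D)$ and the Bessel potential are both Fourier convolution operators, the algebra property $a_{\R^n}(D)b_{\R^n}(D)=(ab)_{\R^n}(D)$ shows they commute, and as $(1-\Delta_{\R^n})^{s/2}:{H}^s_p(\R^n)\to L^p(\R^n)$ is an isometric isomorphism by the very definition \eqref{e6.1} of the norm, conjugation gives
\[
    \|a_{\R^n}(D)\|_{\mathscr{B}({H}^s_p(\R^n))}=\|(1-\Delta_{\R^n})^{s/2}a_{\R^n}(D)(1-\Delta_{\R^n})^{-s/2}\|_{\mathscr{B}(L^p(\R^n))}=\|a_{\R^n}(D)\|_{\mathscr{B}(L^p(\R^n))}.
\]
Chaining these three equalities of operator norms proves the proposition.

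The step I expect to cost the most care is justifying the operator identities \eqref{eq:intertwine} and the Bessel-potential intertwining on the full Banach spaces rather than on $\SchwartzSpaceOn{\R^n}$: the change of variables only gives the differential intertwining for integer orders, and for $p=2$ Parseval trivialises everything, but for general $1\leqslant p\leqslant\infty$ and real $s$ one must route the fractional powers $(1-\Delta)^{s/2}$ and the symbol $a$ through the Fourier-multiplier description and extend by density, interpreting the identities on tempered distributions at the endpoints $p=1,\infty$. A minor additional point is that if the normalisation of $\FTGrp$ in \eqref{eq:FTToGrp} introduces a dilation $a\mapsto a(2\,\cdot)$ in \eqref{eq:intertwine}, this is harmless, since the $L^p$-multiplier class is invariant under dilations of the symbol; once this bookkeeping is settled, the remaining conjugation argument is purely formal.
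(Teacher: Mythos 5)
Your argument is correct, and it is exactly the transference-by-conjugation strategy on which the paper's whole calculus rests. One thing worth knowing: the paper never writes out a proof of Proposition \ref{p6.1} at all --- it only records that the case $n=1$ is proved in \cite{Du22} and that the proof for arbitrary $n$ is similar --- so your proposal supplies, self-containedly, precisely the argument the paper delegates to that reference. Each of your three reductions corresponds to a tool the paper already sets up: the isometric isomorphism $t_{*}:H^{s}_{p}(\R^n)\to H^{s}_{p}(\group)$ is the paper's \eqref{e6.6}, and your derivation of it from the $L^{p}$-isometry of $t_{*}$ together with the intertwining $(1-\Delta_{\group})^{s/2}t_{*}=t_{*}(1-\Delta_{\R^n})^{s/2}$ (integer case from \eqref{eq:derivativeTransform1}, fractional case via the $L^{2}$-functional calculus, since $t_{*}$ is unitary and $\Delta_{\group}=t_{*}\Delta_{\R^n}(t_{*})^{-1}$) is the natural justification; your conjugation identity $a_{\group}(\fD)\,t_{*}=t_{*}\,b_{\R^n}(D)$, with $b$ a dilate of $a$, is the same commuting-diagram device used in the proofs of Theorem \ref{MainTheorem} and Theorem \ref{t6.6}; and the independence of the multiplier property of $s$, obtained by commuting the Fourier multiplier with the Bessel potential, is exactly the content of the remark the paper places immediately after the proposition.

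Two bookkeeping points, neither of which threatens the statement. First, because of the factor-$2$ dilation hidden in the normalisation of $\mathscr{F}_{\group}$ (Remark \ref{FT:Remark}), your asserted \emph{equalities} of operator norms should be weakened to equivalences, or one must also dilate the Bessel weight, using $\|a(2\,\cdot)\|_{\mathfrak{M}_p(\R^n)}=\|a\|_{\mathfrak{M}_p(\R^n)}$ as you indicate; since the proposition only claims equivalence of boundedness, this is harmless. Second, at the endpoints $p=1,\infty$ the extension-by-density step must be read against the paper's definitions \eqref{e6.1} and \eqref{e6.4} of $H^{s}_{p}$ as the closure of the Schwartz space in the Bessel norm (Schwartz functions are not dense in $L^{\infty}$, so ``boundedness'' means the norm bound on the Schwartz space); with that reading, which is the caveat you yourself flag, your chain of equivalences goes through.
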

\begin{remark}
In other words, multiplier classes for the spaces ${H}_p^s(\R^n)$ and
${H}_p^s(G)$ are independent of the parameter $s$ and both coincide with $\mathfrak{M}_p(\R^n)$.
\end{remark}
Based on formulae \eqref{e6.3} and the isomorphism
\begin{eqnarray}\label{e6.6}
t_*&:&{H}_p^s(\R^n)\longrightarrow H^s_p(G),\qquad 1\leqslant p\leqslant\infty,\quad s\in\R,
 \end{eqnarray}
the following is proved (cf. \cite{Du79,Tr95} for the case
${H}_p^s(\R^n)={W}_p^m(\R^n)$).
 %
\begin{proposition}\label{p6.2} For arbitrary $1<p<\infty$ and an integer
$m\in\N_0$ the Bessel potential space $H^m_p(G)$
and the Sobolev space ${W}^m_p(G)$ have equivalent
norms and are topologically isomorphic.
 \end{proposition}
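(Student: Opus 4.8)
The plan is to transport the whole statement to the Euclidean setting through the isomorphism $\pull{t}$ and then invoke the classical Euclidean identity $H^m_p(\R^n)=W^m_p(\R^n)$. The key observation is that $\pull{t}$ simultaneously realises both $H^m_p(\group)$ and $W^m_p(\group)$ as images of the corresponding Euclidean spaces, so that comparing the two norms on $\group$ reduces to comparing them on $\R^n$, where the equivalence is known (cf.\ \cite{Tr95}) precisely in the range $1<p<\infty$.

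First I would record the behaviour of $\pull{t}$ on the Sobolev norm. Iterating the intertwining identity \eqref{eq:derivativeTransform1} gives $\fD^\alpha_\group \pull{t} = \pull{t}\,\pDiffToOrd{t}{\alpha}$ for every multi-index $\alpha\in\N_0^n$. Combining this with the fact that $\pull{t}$ is an $L^p$-isometry (established in the construction of $L^p(\group)$), one obtains for $g\in W^m_p(\R^n)$ the chain
\begin{equation*}
\|\pull{t}g\|_{W^m_p(\group)}^p = \sum_{|\alpha|\leqslant m}\|\fD^\alpha_\group \pull{t}g\|_{L^p(\group)}^p = \sum_{|\alpha|\leqslant m}\|\pull{t}\,\pDiffToOrd{t}{\alpha}g\|_{L^p(\group)}^p = \sum_{|\alpha|\leqslant m}\|\pDiffToOrd{t}{\alpha}g\|_{L^p(\R^n)}^p = \|g\|_{W^m_p(\R^n)}^p,
\end{equation*}
so that $\pull{t}\colon W^m_p(\R^n)\to W^m_p(\group)$ is an isometric isomorphism. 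On the other hand, \eqref{e6.6} already provides that $\pull{t}\colon H^m_p(\R^n)\to H^m_p(\group)$ is a topological isomorphism.

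Next I would invoke the classical Euclidean theorem: for $1<p<\infty$ and $m\in\N_0$ one has $H^m_p(\R^n)=W^m_p(\R^n)$ with equivalent norms. Given $f\in H^m_p(\group)$, set $g:=\pull{x}f$, so that $f=\pull{t}g$ and $g\in H^m_p(\R^n)=W^m_p(\R^n)$. Chaining the three comparisons yields
\begin{equation*}
\|f\|_{H^m_p(\group)} = \|\pull{t}g\|_{H^m_p(\group)} \asymp \|g\|_{H^m_p(\R^n)} \asymp \|g\|_{W^m_p(\R^n)} = \|\pull{t}g\|_{W^m_p(\group)} = \|f\|_{W^m_p(\group)},
\end{equation*}
where the first equivalence is \eqref{e6.6}, the second is the classical Euclidean equivalence, and the two outer equalities are the isometries just described. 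Since $\pull{t}$ carries the common dense core $\SchwartzSpaceOn{\R^n}$ onto $\SchwartzSpaceOn{\group}$ in both scales, this shows that $H^m_p(\group)$ and $W^m_p(\group)$ coincide as sets with equivalent norms, i.e.\ they are topologically isomorphic.

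The genuinely new content is confined to the second step, and it is routine: everything rests on the intertwining relation \eqref{eq:derivativeTransform1} together with the isometry property of $\pull{t}$. The only nontrivial analytic input is the classical equivalence $H^m_p(\R^n)=W^m_p(\R^n)$, and this is exactly where the hypothesis $1<p<\infty$ is indispensable (it fails at the endpoints $p=1,\infty$). I expect the main, though minor, obstacle to be purely bookkeeping: verifying that the transfer map $\pull{t}$ respects the completions defining the two spaces, so that the norm equivalence genuinely upgrades to an identification of the underlying spaces of distributions on $\group$, and not merely of a common dense subspace.
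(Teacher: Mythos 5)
Your proposal is correct and follows essentially the same route as the paper: the paper proves this proposition precisely by transporting the classical Euclidean identity $H^m_p(\R^n)=W^m_p(\R^n)$ (cf.\ \cite{Du79,Tr95}) through the isomorphism \eqref{e6.6} and the fact that $\fD^\alpha_\group$ and $\partial^\alpha_t$ correspond under $\pull{t}$. Your explicit verification that $\pull{t}$ is an isometry on the Sobolev scale and your remark about the dense core $\SchwartzSpaceOn{\R^n}\to\SchwartzSpaceOn{\group}$ merely fill in details the paper leaves implicit.
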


By applying the isomorpism \eqref{e6.5} we can also justify the following
propositions, proved in \cite[\S\,2.4.2]{Tr95} for the Bessel potential
space on the Euclidean space $\R^n$.
 %
\begin{proposition}\label{p6.3}
Let $s_0,s_1,r_0,r_1\in\R, \quad 1\leqslant p_0,p_1,q_0,q_1<\infty,$
$0<\ t<1$ and
\[
\begin{array}{c}
\dst\frac1p= \frac{1-\theta  }{p_0}+ \frac{\theta}{p_1},  \quad
     \dst\frac1q=\frac{1-\theta }{q_0}+ \frac{\theta}{q_1},\quad
     s=(1-\theta)s_0+\theta s_1\, ,\quad r=(1-\theta)r_0+\theta r_1\, .
\end{array}
\]

If $A\in\mathscr{B}_j:=\mathscr{B}(H^{s_j}_{p_j}(G),
H^{r_j}_{q_j}(G))$, $j=0,1$, then $A$ is bounded
between the interpolated spaces $A\in\mathscr{B}:=\mathscr{B}(H^s_p(G),H^r_q(G))$ and the norm is
estimated as follows
$$
 \|A\|_{\mathscr{B}}\leqslant\|A\|_{\mathscr{B}_0}^{1-\theta}\|A\|_{ \mathscr{B}_1}^\theta.
$$
 \end{proposition}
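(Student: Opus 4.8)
The plan is to reduce the statement to the corresponding complex interpolation theorem on $\R^n$ by transferring the operator $A$ through the canonical map $\pull{t}$, and then to invoke the Euclidean result of \cite[\S\,2.4.2]{Tr95}. The first step is to upgrade the isomorphism \eqref{e6.6} to an \emph{isometric} isomorphism at every relevant index. The intertwining relation \eqref{eq:derivativeTransform1} gives $\pull{t}\,\partial_{t_j}=\pDiffGrpTo{x_j}\,\pull{t}$, so that $\pull{t}\,\Delta_{\mathbb{R}^n}=\Delta_G\,\pull{t}$, and by the functional calculus of the self-adjoint extensions this promotes to $\pull{t}\,(1-\Delta_{\mathbb{R}^n})^{s/2}=(1-\Delta_G)^{s/2}\,\pull{t}$ for every $s\in\R$. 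Combined with the fact that $\pull{t}$ is an $L^p$-isometry (the change of variables producing the Haar measure $\mathrm{d}\HaarMeas$), this yields, for all $f\in\SchwartzSpaceOn{\R^n}$, $1\le p\le\infty$ and $s\in\R$,
\[
\|\pull{t}f\|_{H^s_p(\group)}=\|(1-\Delta_G)^{s/2}\pull{t}f\|_{L^p(\group)}=\|\pull{t}(1-\Delta_{\mathbb{R}^n})^{s/2}f\|_{L^p(\group)}=\|f\|_{H^s_p(\R^n)}.
\]
Hence $\pull{t}:H^s_p(\R^n)\to H^s_p(\group)$ is an isometric isomorphism with inverse $\pull{x}$ for each index pair $(s,p)$ occurring in the statement.

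Next I would conjugate the operator. Setting $\widetilde A:=\pull{x}\circ A\circ\pull{t}$, one has $A=\pull{t}\circ\widetilde A\circ\pull{x}$ because $\pull{t}$ and $\pull{x}$ are inverse bijections. By the isometry just established, for $j=0,1$ the map $A:H^{s_j}_{p_j}(\group)\to H^{r_j}_{q_j}(\group)$ is bounded if and only if $\widetilde A:H^{s_j}_{p_j}(\R^n)\to H^{r_j}_{q_j}(\R^n)$ is bounded, with identical operator norms $\|A\|_{\mathscr{B}_j}=\|\widetilde A\|_{\mathscr{B}(H^{s_j}_{p_j}(\R^n),H^{r_j}_{q_j}(\R^n))}$. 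I then apply the complex interpolation theorem on $\R^n$ from \cite[\S\,2.4.2]{Tr95}, which identifies $[H^{s_0}_{p_0}(\R^n),H^{s_1}_{p_1}(\R^n)]_\theta=H^s_p(\R^n)$ and analogously for the target couple, where the restrictions $1\le p_j,q_j<\infty$ guarantee admissibility of the couples for the complex method. This gives $\widetilde A:H^s_p(\R^n)\to H^r_q(\R^n)$ bounded with $\|\widetilde A\|\le\|\widetilde A\|_{\mathscr{B}_0}^{1-\theta}\|\widetilde A\|_{\mathscr{B}_1}^{\theta}$. Transferring back through $\pull{t}$ (that is, via $A=\pull{t}\circ\widetilde A\circ\pull{x}$ together with the isometries) reproduces exactly the claimed estimate $\|A\|_{\mathscr{B}}\le\|A\|_{\mathscr{B}_0}^{1-\theta}\|A\|_{\mathscr{B}_1}^{\theta}$.

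The one point demanding care, and which I would isolate as the main obstacle, is the compatibility of the interpolation couples under this transfer. One must verify that the single map $\pull{t}$ acts as an isometry \emph{simultaneously} at the endpoint indices $(s_0,p_0),(s_1,p_1)$ and at the interpolated index $(s,p)$, so that the exact interpolation functor $[\cdot,\cdot]_\theta$ carries the Euclidean identification over to $\group$. Concretely, $\pull{t}$ maps the ambient Hausdorff space $\mathcal{S}'(\R^n)$, in which the $\R^n$-couple lives, isomorphically onto the space $\pull{t}(\mathcal{S}'(\R^n))$ of distributions on $\group$, whence
\[
[H^{s_0}_{p_0}(\group),H^{s_1}_{p_1}(\group)]_\theta=\pull{t}\bigl([H^{s_0}_{p_0}(\R^n),H^{s_1}_{p_1}(\R^n)]_\theta\bigr)=H^s_p(\group),
\]
and similarly for the target couple. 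Once this functorial identification is recorded, the norm inequality is immediate from the endpoint norm equalities and the standard operator bound for the complex interpolation method, and the proof is complete.
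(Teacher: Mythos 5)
Your proposal is correct and takes essentially the same approach as the paper: the paper proves this proposition in a single line, invoking the isomorphism $\pull{t}\colon H^s_p(\R^n)\to H^s_p(G)$ of \eqref{e6.6} (miscited there as \eqref{e6.5}) to transfer the complex interpolation theorem of \cite[\S\,2.4.2]{Tr95} from $\R^n$ to $G$. Your write-up simply makes explicit what the paper leaves implicit — the intertwining $\pull{t}\,(1-\Delta_{\mathbb{R}^n})^{s/2}=(1-\Delta_G)^{s/2}\,\pull{t}$, the resulting isometry of $\pull{t}$ on the Bessel potential spaces (which is exactly what lets the constant-free multiplicative norm bound survive the transfer), and the conjugation $\widetilde A=\pull{x}\circ A\circ\pull{t}$.
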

 %
\begin{proposition}[see \cite{Du79}]\label{p6.4} Let $s,r\in\R$,
$1\leqslant p\leqslant\infty$. The Bessel potential operator
 \begin{eqnarray}\label{e6.7}
(1-\Delta_G)^{\frac{r}{2}}
     \;:\;\bH_p^s(G)\rightarrow
     \bH_p^{s-r}(G).
 \end{eqnarray}
 \end{proposition}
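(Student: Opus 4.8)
The plan is to transfer the statement to its classical Euclidean counterpart through the isometric isomorphism $\pull{t}$ (with inverse $\pull{x}$), exploiting that $\pull{t}$ intertwines the two Laplacians. The whole argument rests on the fact, recorded in \eqref{e6.6}, that $\pull{t}$ and $\pull{x}$ are isomorphisms of the corresponding Sobolev spaces.

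First I would establish the intertwining relation at the level of the Laplacians. Iterating the operator identity \eqref{eq:derivativeTransform1}, namely $\pull{t}\,\partial_{t_j} = \fD_{x_j}\pull{t}$, gives $\pull{t}\,\partial_{t_j}^2 = \fD_{x_j}^2\,\pull{t}$ for each $j$, and summing over $j$ yields
\begin{equation*}
\pull{t}\,\Delta_{\R^n} = \Delta_G\,\pull{t}, \qquad \textnormal{equivalently} \qquad \pull{t}\,(1-\Delta_{\R^n}) = (1-\Delta_G)\,\pull{t}.
\end{equation*}

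Next I would upgrade this to the fractional powers. Since $\fD_{x_j}$ is the convolution operator on $\group$ with symbol $-i\xi_j$, the operator $\Delta_G = -\sum_{j} \fD_{x_j}^2$ is the convolution operator $a_G(\fD)$ with symbol $|\xi|^2$, exactly as $\Delta_{\R^n} = a_{\R^n}(D)$ carries the same symbol. Hence $(1-\Delta_G)^{r/2}$ and $(1-\Delta_{\R^n})^{r/2}$ are the convolution operators with the common symbol $(1+|\xi|^2)^{r/2}$. Using the transference identity \eqref{eq:FTToGrp} relating $\FTGrp$ and $\FTEucl$, which shows that $\pull{t}$ conjugates Euclidean Fourier multipliers to the corresponding group multipliers, I would conclude on the Schwartz class, and then by density, the intertwining
\begin{equation*}
(1-\Delta_G)^{r/2} = \pull{t}\,(1-\Delta_{\R^n})^{r/2}\,\pull{x}.
\end{equation*}
For $p=2$ this identity is immediate from the spectral theorem, since $\pull{t}$ is an $L^2(\R^n)\to L^2(\group)$ isometry and $\Delta_{\R^n}$ is self-adjoint.

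Finally I would factor the operator as the composition
\begin{equation*}
\bH_p^s(\group) \xrightarrow{\pull{x}} \bH_p^s(\R^n) \xrightarrow{(1-\Delta_{\R^n})^{r/2}} \bH_p^{s-r}(\R^n) \xrightarrow{\pull{t}} \bH_p^{s-r}(\group).
\end{equation*}
The outer arrows are the isomorphisms \eqref{e6.6}, and the middle arrow is a bounded isomorphism by the classical Euclidean Bessel-potential theory (cf. \cite{Tr95}); the composition is therefore a bounded isomorphism, which is the claim. The main obstacle is the rigorous justification, for $p \neq 2$ and for negative or non-integer $r$, of the functional-calculus identity in the middle step: the spectral theorem only covers $p=2$, so in general one must \emph{define} $(1-\Delta_G)^{r/2}$ via the group Fourier transform as the operator $a_G(\fD)$ with symbol $(1+|\xi|^2)^{r/2}$ and verify the intertwining directly from \eqref{eq:FTToGrp} on a dense subspace before extending by continuity.
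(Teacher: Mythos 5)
Your proposal is correct and is essentially the paper's own argument: the paper justifies Proposition \ref{p6.4} precisely by transferring the Euclidean result of \cite{Tr95,Du79} through the isomorphism \eqref{e6.6}, which is exactly your factorisation $(1-\Delta_G)^{r/2}=\pull{t}\,(1-\Delta_{\R^n})^{r/2}\,\pull{x}$. The only difference is that the paper leaves this transference implicit (a one-line citation), whereas you spell out the intertwining of the Laplacians, the common multiplier symbol $(1+|\xi|^2)^{r/2}$, and the density/extension step needed for $p\neq 2$ — all of which is consistent with the paper's conventions (cf.\ Proposition \ref{p6.1} and the remark that $\partial^\alpha$ and $\fD^\alpha_\group$ share the same symbol).
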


\section{\bf Pseudo-differential operators on \texorpdfstring{$\group = (-1,1)^n$}{group}}\label{HClassesG}

\subsection{The global H\"ormander classes}

Let \(0 \leqslant \rho, \delta \leqslant 1\) and \(m \in \R\). Our goal is to give a useful definition of (global) symbol classes on \(G\). To this end we start from a symbol \( a=a(t,\xi) \in \symbClassOn{m}{\rho}{\delta}{\R^n \times \R^n}\).
\begin{remark}[H\"ormander classes on $\R^n$ v.s. H\"ormander classes on $G$]
In the same spirit of \cref{subsection:funcSpaces} we consider \(\symbToGrp{a}(x,\xi) := a(t(x),\xi) \in \CinftyOn{\group \times \R^n}\). This function satisfies for every \(\alpha, \beta \in \N_{0}^{n}\) and every \((x,\xi) \in \group \times \R^n\) the symbolic estimates
\begin{equation}\label{ineq:symbIneqGrp}
    \abs{\pDiffGrpToOrd{x}{\beta} \pDiffToOrd{\xi}{\alpha} \symbToGrp{a}(x,\xi)}
    = \abs{(\pDiffToOrd{t}{\beta} \pDiffToOrd{\xi}{\alpha}a)(t(x),\xi)}
    \leqslant C_{\alpha,\beta} (1+\abs{\xi})^{m-\rho\abs{\alpha}+\delta\abs{\beta}}.
\end{equation}
Conversely, if a smooth function \(a \in \CinftyOn{\group \times \R^{n}}\) satisfies the symbolic estimates \eqref{ineq:symbIneqGrp}, then we obtain for \(\symbToRn{a}(t,\xi) := a(x(t),\xi) \in \CinftyOn{\R^{n} \times \R^{n}}\) the inequalities
\[\abs{\pDiffToOrd{t}{\beta} \pDiffToOrd{\xi}{\alpha} \symbToRn{a}(t,\xi)} = \abs{(\pDiffGrpToOrd{x}{\beta} \pDiffToOrd{\xi}{\alpha} a)(x(t),\xi)} \leq C_{\alpha,\beta} (1+\abs{\xi})^{m-\rho\abs{\alpha}+\delta\abs{\beta}}.\]
Hence, we define the symbol class \(\symbClassOn{m}{\rho}{\delta}{\group \times \R^{n}}\) as the space of smooth functions \(a \in \CinftyOn{\group \times \R^{n}}\) that satisfy for all \((x,\xi) \in \group \times \R^{n}\) and all \(\alpha, \beta \in \N_{0}^{n}\) the symbolic estimates \[\abs{\pDiffToOrd{\xi}{\alpha} \pDiffGrpToOrd{x}{\beta} \sigma(x,\xi)} \leqslant C_{\alpha,\beta}(1+\abs{\xi})^{m-\rho\abs{\alpha}+\delta\abs{\beta}},\]
where \(C_{\alpha,\beta} > 0\) are some positive constants.

Moreover, it can easily be checked that \(\symbToRn{\left(\symbToGrp{a}\right)} = a\) for \(a \in \symbClassOn{m}{\rho}{\delta}{\R^n \times \R^n}\), and similarly \(\symbToGrp{\left(\symbToRn{a}\right)} = a\) for \(a \in \symbClassOn{m}{\rho}{\delta}{\group \times \R^n}\). Thus, there is a canonical bijection between \(\symbClassOn{m}{\rho}{\delta}{\R^n \times \R^n}\) and \(\symbClassOn{m}{\rho}{\delta}{\group \times \R^n}\).
\end{remark}

\subsection{The quantisation formula}

Let \(\sigma \in \symbClassOn{m}{\rho}{\delta}{\R^{n} \times \R^{n}}\) for some \(m \in \R\) and \(0 \leq \rho, \delta \leq 1\). The symbol \(\sigma\) determines a pseudo-differential operator expressed as
\[\opRnOf{\sigma}f(t) = \int\limits_{\R^{n}}{e^{2 \pi i t \cdot \xi} \, \sigma(t,\xi) \, \FTEucl{f}(\xi)}{d\xi}, \qquad f \in \SchwartzSpaceOn{\R^{n}}.\]
We look for an expression of pseudo-differential operators on \(\group\) now. So, let \(f \in \SchwartzSpaceOn{\group}\). Then \(\pull{x}f \in \SchwartzSpaceOn{\R^n}\) so that \(\pull{t} \opRnOf{\sigma} \pull{x} f \in \SchwartzSpaceOn{\group}\). We compute an expression for this function for \(y \in \group\) with the help of \eqref{eq:FTToGrp}:
\begin{align}
    \pull{t} \left[\opRnOf{\sigma} \left(\pull{x} f\right)\right](y) &= \int\limits_{\R^{n}}{e^{2 \pi i t(y) \cdot \xi} \, \sigma(t(y),\xi) \, \FTEucl[\pull{x}f](\xi)}{d\xi} \nonumber \\
    &= \int\limits_{\R^{n}}{\left(\frac{1-y}{1+y}\right)^{i \pi \xi} \symbToGrp{\sigma}(y,\xi) \, \FTGrp{f}(\xi/2)}{d\xi}.\label{eq:OpToGrp}
\end{align}
Hence, we define the pseudo-differential operator \(\sigma(x,\mathfrak{D})\) associated with the symbol \(\sigma \in \symbClassOn{m}{\rho}{\delta}{\group \times \R^n}\) by
\begin{equation}\label{e2.3}
    \sigma(x,\mathfrak{D})f(x) = \int\limits_{\R^n}{\left(\frac{1-x}{1+x}\right)^{i \pi \xi} \sigma(x,\xi) \, \FTGrp f\left(\frac\xi2\right)}{d\xi}, \qquad f \in \SchwartzSpaceOn{\group}.
\end{equation} Note that the argument $\xi/2$ in \eqref{e2.3} comes from the Fourier inversion formula \eqref{FIF}. On the other hand, we denote by
\begin{equation}
    \PsiDOHor{m}{\rho}{\delta}{\group \times \R^n} = \{\sigma(x,\mathfrak{D}) : \sigma \in \symbClassOn{0}{\rho}{\delta}{\group \times \R^n}\}
\end{equation}
the family of (global) pseudo-differential operators on $\group$ with order $m$ and of $(\rho,\delta)$-type.
Moreover, it follows from \eqref{eq:OpToGrp} that there is a bijective correspondence between pseudo-differential operators on \(\R^n\) and on \(\group\) given by
\begin{equation}\label{eq:PsiDOCorrespondence}
    \pull{t} \opRnOf{\sigma} \pull{x} = \sigma^G(x,\mathfrak{D}).
\end{equation}
Note that this is not really surprising as this is the change of variables formula for pseudo-differential operators. 

We summarise the  interplay between the H\"ormander classes on $\R^n$ with the global ones on $G$ in the following theorem.
\begin{corollary}
    Let $m\in \R$ and $0\leqslant \delta,\rho\leqslant 1$. Then, $A : \SchwartzSpaceOn{G} \to \SchwartzSpaceOn{G}$ is a pseudo-differential operator in the class $\PsiDOHor{m}{\rho}{\delta}{\group \times \R^n}$ if and only if there exists a pseudo-differential operator
      $\tilde{A} : \SchwartzSpaceOn{\R^n} \to \SchwartzSpaceOn{\R^n}$ in the class $\PsiDOHor{m}{\rho}{\delta}{\R^n \times \R^n}$ such that the following diagram
    \begin{eqnarray}
        \begin{tikzpicture}[every node/.style={midway}]
      \matrix[column sep={10em,between origins}, row sep={4em}] at (0,0) {
        \node(R) {$\SchwartzSpaceOn{\R^n}$}  ; & \node(S) {$\SchwartzSpaceOn{\R^n}$}; \\
        \node(R/I) {$\SchwartzSpaceOn{\group}$}; & \node (T) {$\SchwartzSpaceOn{\group}$};\\
      };
      \draw[<-] (R/I) -- (R) node[anchor=east]  {$\pull{t}$};
      \draw[->] (R) -- (S) node[anchor=south] {$\tilde{A}$};
      \draw[->] (S) -- (T) node[anchor=west] {$\pull{t}$};
      \draw[->] (R/I) -- (T) node[anchor=north] {$A$};
    \end{tikzpicture}
    \end{eqnarray}commutes. It means, for any $f=f(t) \in \SchwartzSpaceOn{\R^n}$, $\pull{t} \circ \tilde{A} f = A\pull{t}f$. Moreover, if $\tilde{a}$ is the symbol of $\tilde{A}$, then the symbol of $a$ is given by $$a(x,\xi) \equiv \symbToGrp{\tilde{a}}(x,\xi)=\tilde{a}(t(x),\xi),$$ for all $(x,\xi)\in \group \times \R^n$.
\end{corollary}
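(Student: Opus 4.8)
The plan is to obtain this statement as a formal consequence of three facts already established above: the change-of-variables identity \eqref{eq:PsiDOCorrespondence}, namely $\pull{t} \circ \opRnOf{\sigma} \circ \pull{x} = \symbToGrp{\sigma}(x,\mathfrak{D})$; the canonical symbol-class bijection $a \mapsto \symbToGrp{a}$, $a \mapsto \symbToRn{a}$ together with the inversion identities $\symbToGrp{(\symbToRn{a})} = a$ and $\symbToRn{(\symbToGrp{a})} = a$; and the elementary observation that $\pull{t}$ and $\pull{x}$ are mutually inverse bijections on the relevant Schwartz spaces, so that $\pull{x} \circ \pull{t} = \mathrm{id}$ and $\pull{t} \circ \pull{x} = \mathrm{id}$. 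No new analytic estimate is required; the content is purely a rewriting of the conjugation relation into the commuting-square form.

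For the forward implication I would assume $A = a(x,\mathfrak{D})$ for some $a \in \symbClassOn{m}{\rho}{\delta}{\group \times \R^n}$, and set $\tilde{a} := \symbToRn{a} \in \symbClassOn{m}{\rho}{\delta}{\R^n \times \R^n}$, which lies in the correct class by the symbol bijection. Letting $\tilde{A} := \opRnOf{\tilde{a}} \in \PsiDOHor{m}{\rho}{\delta}{\R^n \times \R^n}$, identity \eqref{eq:PsiDOCorrespondence} combined with $\symbToGrp{(\symbToRn{a})} = a$ gives $\pull{t} \circ \tilde{A} \circ \pull{x} = \symbToGrp{\tilde{a}}(x,\mathfrak{D}) = a(x,\mathfrak{D}) = A$. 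Composing this on the right with $\pull{t}$ and using $\pull{x} \circ \pull{t} = \mathrm{id}$ yields $\pull{t} \circ \tilde{A} = A \circ \pull{t}$, which is exactly the asserted commutativity. The symbol relation $a(x,\xi) = \symbToGrp{\tilde{a}}(x,\xi) = \tilde{a}(t(x),\xi)$ then holds by construction.

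For the converse I would start from $\tilde{A} = \opRnOf{\tilde{a}} \in \PsiDOHor{m}{\rho}{\delta}{\R^n \times \R^n}$ satisfying $\pull{t} \circ \tilde{A} = A \circ \pull{t}$. Composing on the right with $\pull{x}$ and using $\pull{t} \circ \pull{x} = \mathrm{id}$ gives $\pull{t} \circ \tilde{A} \circ \pull{x} = A$; by \eqref{eq:PsiDOCorrespondence} the left-hand side equals $\symbToGrp{\tilde{a}}(x,\mathfrak{D})$, so $A = \symbToGrp{\tilde{a}}(x,\mathfrak{D})$. Since the symbol bijection sends $\tilde{a} \in \symbClassOn{m}{\rho}{\delta}{\R^n \times \R^n}$ to $\symbToGrp{\tilde{a}} \in \symbClassOn{m}{\rho}{\delta}{\group \times \R^n}$, we conclude $A \in \PsiDOHor{m}{\rho}{\delta}{\group \times \R^n}$ with symbol $a = \symbToGrp{\tilde{a}}$, as claimed.

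The only place demanding any care --- and the nearest thing to an obstacle --- is bookkeeping the order of compositions when passing between the conjugation relation $\pull{t}\,\tilde{A}\,\pull{x} = \symbToGrp{\tilde{a}}(x,\mathfrak{D})$ and the commuting square $\pull{t} \circ \tilde{A} = A \circ \pull{t}$. Both directions reduce to cancelling $\pull{x}$ against $\pull{t}$ (or vice versa) using that they are inverse bijections, so once this is set up correctly the equivalence is immediate and the symbol formula drops out automatically.
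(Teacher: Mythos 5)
Your proposal is correct and follows essentially the same route as the paper's proof: both rest entirely on the conjugation identity \eqref{eq:PsiDOCorrespondence}, the canonical symbol bijection $a \leftrightarrow \symbToRn{a}$, and the fact that $\pull{t}$ and $\pull{x}$ are mutually inverse, with the commuting square obtained by cancelling one pull-back against the other. The only difference is that you spell out the two directions of the equivalence explicitly, whereas the paper compresses them into a single appeal to the bijective correspondence.
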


\begin{proof}
    The bijective correspondence \eqref{eq:PsiDOCorrespondence} between \(\PsiDOHor{m}{\rho}{\delta}{\R^n \times \R^n}\) and \(\PsiDOHor{m}{\rho}{\delta}{\group \times \R^n}\) yields \(\pull{t} \tilde{A} \pull{x} = A\), which is equivalent to \(\pull{t} \tilde{A} = A \pull{t}\).
    This means that \(\pull{t} \tilde{A} f = A \pull{t} f\) for all \(f \in \SchwartzSpaceOn{\R^n}\). This bijective correspondence is indeed such that the symbols are related by
    \(a(x,\xi) = \symbToGrp{\tilde{a}}(x,\xi)\).
\end{proof}

\subsection{Composition of pseudo-differential operators}
Here we study the composition of pseudo-differential operators. We show that the classes $ \symbClassOn{m}{\rho}{\delta}{\group \times \R^n}$ are stable  under the composition of operators.
\begin{theorem}
    Let \(0 \leqslant \delta < \rho \leqslant 1\). Let \(a \in \symbClassOn{m_1}{\rho}{\delta}{\group \times \R^n}\) and \(b \in \symbClassOn{m_2}{\rho}{\delta}{\group \times \R^n}\). Then there exists a symbol \(c \in \symbClassOn{m_1 + m_2}{\rho}{\delta}{\group \times \R^n}\) such that \(c(x,\mathfrak{D}) = a(x,\mathfrak{D}) \circ b(x,\mathfrak{D})\). Moreover, we have the asymptotic formula
    \[\forall (x,\xi)\in \group \times \R^n:  c(x,\xi) \sim \sum_{\alpha} \frac{(2 \pi i)^{-\abs{\alpha}}}{\alpha!} (\pDiffToOrd{\xi}{\alpha} a(x,\xi)) (\pDiffGrpToOrd{x}{\alpha} b(x,\xi)),\]
    in the sense that for any $N \in \N,$
\[(x,\xi) \mapsto c(x,\xi) - \sum_{\abs{\alpha}<N} \frac{(2 \pi i)^{-\abs{\alpha}}}{\alpha!} (\pDiffToOrd{\xi}{\alpha} a(x,\xi)) (\pDiffGrpToOrd{x}{\alpha} b(x,\xi)) \in \symbClassOn{m-(\rho-\delta)N}{\rho}{\delta}{\group \times \R^n}.\]
\end{theorem}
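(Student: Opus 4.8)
The plan is to reduce the statement to the classical composition theorem for H\"ormander classes on $\R^n$, using the conjugation identity \eqref{eq:PsiDOCorrespondence}, and then to transport the resulting Euclidean asymptotic expansion back to $\group$ by means of the derivative transform \eqref{eq:derivativeTransform2}. Write $\tilde a := \symbToRn{a}$ and $\tilde b := \symbToRn{b}$; by the symbol-class bijection established earlier these belong to $\symbClassOn{m_1}{\rho}{\delta}{\R^n \times \R^n}$ and $\symbClassOn{m_2}{\rho}{\delta}{\R^n \times \R^n}$ respectively. Identity \eqref{eq:PsiDOCorrespondence} gives $a(x,\mathfrak{D}) = \pull{t}\,\opRnOf{\tilde a}\,\pull{x}$ and $b(x,\mathfrak{D}) = \pull{t}\,\opRnOf{\tilde b}\,\pull{x}$. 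Since $x$ and $t$ are mutually inverse bijections we have $\pull{x}\pull{t} = \mathrm{id}$ on $\SchwartzSpaceOn{\R^n}$, so the two middle factors cancel and
\[a(x,\mathfrak{D}) \circ b(x,\mathfrak{D}) = \pull{t}\,\big(\opRnOf{\tilde a}\,\opRnOf{\tilde b}\big)\,\pull{x}.\]
Thus the composition on $\group$ is conjugate to the honest composition of two Euclidean pseudo-differential operators.

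Second, since $0 \leqslant \delta < \rho \leqslant 1$, the classical composition theorem on $\R^n$ applies: there is $\tilde c \in \symbClassOn{m_1+m_2}{\rho}{\delta}{\R^n \times \R^n}$ with $\opRnOf{\tilde a}\,\opRnOf{\tilde b} = \opRnOf{\tilde c}$ and
\[\tilde c(t,\xi) \sim \sum_{\alpha} \frac{(2\pi i)^{-\abs{\alpha}}}{\alpha!}\,(\pDiffToOrd{\xi}{\alpha}\tilde a)(t,\xi)\,(\pDiffToOrd{t}{\alpha}\tilde b)(t,\xi),\]
the remainder after summing over $\abs{\alpha} < N$ lying in $\symbClassOn{m_1+m_2-(\rho-\delta)N}{\rho}{\delta}{\R^n \times \R^n}$. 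Setting $c := \symbToGrp{\tilde c}$, the symbol-class bijection places $c$ in $\symbClassOn{m_1+m_2}{\rho}{\delta}{\group \times \R^n}$, and applying \eqref{eq:PsiDOCorrespondence} once more yields $c(x,\mathfrak{D}) = \pull{t}\,\opRnOf{\tilde c}\,\pull{x} = a(x,\mathfrak{D}) \circ b(x,\mathfrak{D})$, as required.

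Finally, to recover the stated form of the expansion I would rewrite the Euclidean derivatives in group terms. Iterating \eqref{eq:derivativeTransform2} component by component gives the operator identity $\pull{x}\,\pDiffGrpToOrd{x}{\alpha} = \pDiffToOrd{t}{\alpha}\,\pull{x}$, so that $(\pDiffToOrd{t}{\alpha}\tilde b)(t,\xi) = (\pDiffGrpToOrd{x}{\alpha} b)(x(t),\xi)$, while the $\xi$-derivatives are unaffected by the change of variables, $(\pDiffToOrd{\xi}{\alpha}\tilde a)(t,\xi) = (\pDiffToOrd{\xi}{\alpha} a)(x(t),\xi)$. Evaluating $c(x,\xi) = \tilde c(t(x),\xi)$ then converts the displayed Euclidean series termwise into
\[c(x,\xi) \sim \sum_{\alpha} \frac{(2\pi i)^{-\abs{\alpha}}}{\alpha!}\,(\pDiffToOrd{\xi}{\alpha} a(x,\xi))\,(\pDiffGrpToOrd{x}{\alpha} b(x,\xi)),\]
and the same bijection sends each Euclidean remainder into $\symbClassOn{m_1+m_2-(\rho-\delta)N}{\rho}{\delta}{\group \times \R^n}$, giving the asymptotic statement. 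The substantive analytic content is thus entirely inherited from the Euclidean theorem; the only point requiring care is the bookkeeping that the symbol-class bijection preserves orders and that the iterated derivative transform correctly matches the multi-index conventions of $\pDiffGrpToOrd{x}{\alpha}$ and $\pDiffToOrd{t}{\alpha}$ --- this is where the main (though routine) work lies.
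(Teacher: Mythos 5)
Your proposal is correct and follows essentially the same route as the paper's own proof: transfer the symbols to $\R^n$ via the bijection $a \mapsto \symbToRn{a}$, invoke the classical Euclidean composition theorem, use the conjugation identity \eqref{eq:PsiDOCorrespondence} together with $\pull{x}\pull{t} = \mathrm{id}$ to identify the composed operator, and pull the asymptotic expansion back to $\group$ through the symbol-class bijection. Your explicit rewriting of $\pDiffToOrd{t}{\alpha}\tilde b$ as $\pDiffGrpToOrd{x}{\alpha} b$ via \eqref{eq:derivativeTransform2} is just a more detailed account of the step the paper performs implicitly when it substitutes $\symbToGrp{\sigma}(x,\xi) = \sigma(t(x),\xi)$ into the Euclidean expansion.
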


\begin{proof}
    Suppose \(a \in \symbClassOn{m_1}{\rho}{\delta}{\group \times \R^n}\) and \(b \in \symbClassOn{m_2}{\rho}{\delta}{\group \times \R^n}\). Then \(\symbToRn{a} \in \symbClassOn{m_1}{\rho}{\delta}{\R^n \times \R^n}\) and \(\symbToRn{b} \in \symbClassOn{m_2}{\rho}{\delta}{\R^n \times \R^n}\). By the composition formula in \(\R^n\) 
    there exists a symbol \(c \in \symbClassOn{m_{1}+m_{2}}{\rho}{\delta}{\R^n \times \R^n}\) such that \(\opRnOf{c} = \opRnOf{\symbToRn{a}} \circ \opRnOf{\symbToRn{b}}\), and we have the asymptotic expansion
    \begin{equation}\label{eq:asympC}
        c(t,\xi) \sim \sum_{\alpha} \frac{(2 \pi i)^{-\abs{\alpha}}}{\alpha!} (\pDiffToOrd{\xi}{\alpha} \symbToRn{a}(t,\xi)) (\pDiffToOrd{t}{\alpha} \symbToRn{b}(t,\xi)).
    \end{equation}
    With the help of \eqref{eq:PsiDOCorrespondence} we obtain
    \[c(x,\mathfrak{D})= \pull{t} \opRnOf{c} \pull{x} = \pull{t} \opRnOf{\symbToRn{a}} \pull{x} \pull{t} \opRnOf{\symbToRn{b}} \pull{x} = a(x,\mathfrak{D}) \circ b(x,\mathfrak{D}),\]
    where we have applied that \(\pull{x}\) and \(\pull{t}\) are each other's inverse.

    Let \(N \in \N\). Using the formula \(\symbToGrp{\sigma}(x,\xi) = \sigma(t(x),\xi)\) for any \(\sigma \in \symbClassOn{m}{\rho}{\delta}{\R^n \times \R^n}\), we deduce from \eqref{eq:asympC} that
    \[\symbToGrp{c}(x,\xi) - \sum_{\abs{\alpha}<N} \frac{(2 \pi i)^{-\abs{\alpha}}}{\alpha!} (\pDiffToOrd{\xi}{\alpha} a(x,\xi)) (\pDiffGrpToOrd{x}{\alpha} b(x,\xi)) \in \symbClassOn{m-(\rho-\delta)N}{\rho}{\delta}{\group \times \R^n}.\]
    This completes the proof.
\end{proof}

\subsection{The adjoint of a pseudo-differential operator} Here we study the adjoints of the pseudo-differential operators in the classes $ \symbClassOn{m}{\rho}{\delta}{\group \times \R^n}$. We show that theses classes  are stable  under adjoints.
\begin{theorem}
    Let \(0 \leqslant \delta < \rho \leqslant 1\). Let \(a \in \symbClassOn{m}{\rho}{\delta}{\group \times \R^n}\). Then there exists a symbol \(\adj{a} \in \symbClassOn{m}{\rho}{\delta}{\group \times \R^n}\) such that \(a^\ast(x,\mathfrak{D}) = a(x,\mathfrak{D})^\ast\). Moreover, we have the asymptotic formula
    \[\forall (x,\xi)\in \group \times \R^n:  \adj{a}(x,\xi) \sim \sum_{\alpha} \frac{(2 \pi i)^{-\abs{\alpha}}}{\alpha!} \pDiffToOrd{\xi}{\alpha} \pDiffGrpToOrd{x}{\alpha} \conj{a(x,\xi)},\]
    in the sense that for any $N \in \N$,
    \[(x,\xi) \mapsto \adj{a}(x,\xi) - \sum_{\abs{\alpha}<N} \frac{(2 \pi i)^{-\abs{\alpha}}}{\alpha!} \pDiffToOrd{\xi}{\alpha} \pDiffGrpToOrd{x}{\alpha} \conj{a(x,\xi)} \in \symbClassOn{m-(\rho-\delta)N}{\rho}{\delta}{\group \times \R^n}.\]
\end{theorem}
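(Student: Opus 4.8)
The plan is to reduce the statement entirely to its Euclidean counterpart using the bijective correspondence \eqref{eq:PsiDOCorrespondence}, exactly as was done for the composition theorem. This is the natural strategy because all the hard analytic work---the existence of an adjoint symbol and its asymptotic expansion---is already available for the H\"ormander classes $\symbClassOn{m}{\rho}{\delta}{\R^n \times \R^n}$ on Euclidean space, and the isomorphisms $\pull{t}$ and $\pull{x}$ transport everything faithfully between $\R^n$ and $\group$.

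First I would observe that the adjoint in question is taken with respect to the inner product on $L^2(G)$, so I must check that $\pull{t}$ is compatible with adjoints. By \eqref{eq:innerProducts} the map $\pull{t} : L^2(\R^n) \to L^2(\group)$ is a unitary isometry, hence $\pull{t}^{-1} = \pull{x}$ equals the Hilbert-space adjoint of $\pull{t}$. Starting from $a \in \symbClassOn{m}{\rho}{\delta}{\group \times \R^n}$ I pass to $\symbToRn{a} \in \symbClassOn{m}{\rho}{\delta}{\R^n \times \R^n}$ and write $\tilde{A} = \opRnOf{\symbToRn{a}}$, so that $a(x,\mathfrak{D}) = \pull{t}\, \tilde{A}\, \pull{x}$ by \eqref{eq:PsiDOCorrespondence}. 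Taking Hilbert-space adjoints and using that $\pull{t}$ is unitary gives
\[
a(x,\mathfrak{D})^{*} = (\pull{t}\, \tilde{A}\, \pull{x})^{*} = \pull{x}^{*}\, \tilde{A}^{*}\, \pull{t}^{*} = \pull{t}\, \tilde{A}^{*}\, \pull{x}.
\]
Now the Euclidean adjoint theorem supplies a symbol $c \in \symbClassOn{m}{\rho}{\delta}{\R^n \times \R^n}$ with $\tilde{A}^{*} = \opRnOf{c}$ and the asymptotic expansion
\[
c(t,\xi) \sim \sum_{\alpha} \frac{(2\pi i)^{-\abs{\alpha}}}{\alpha!} \pDiffToOrd{\xi}{\alpha} \pDiffToOrd{t}{\alpha} \conj{\symbToRn{a}(t,\xi)}.
\]
Applying \eqref{eq:PsiDOCorrespondence} once more, $\pull{t}\,\opRnOf{c}\,\pull{x} = c^{G}(x,\mathfrak{D})$, so setting $\adj{a} := \symbToGrp{c}$ yields $\adj{a}(x,\mathfrak{D}) = a(x,\mathfrak{D})^{*}$, with $\adj{a} \in \symbClassOn{m}{\rho}{\delta}{\group \times \R^n}$ by the canonical bijection between the Euclidean and group symbol classes.

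It remains to translate the asymptotic expansion through $\symbToGrp{(\cdot)}$. Using $\symbToGrp{\sigma}(x,\xi) = \sigma(t(x),\xi)$ together with the operator identity $\pull{t}\,\partial_{t_j} = \pDiffGrpTo{x_j}\,\pull{t}$ from \eqref{eq:derivativeTransform1}, each factor $\pDiffToOrd{t}{\alpha}$ acting on the Euclidean side becomes $\pDiffGrpToOrd{x}{\alpha}$ on the group side, and $\conj{\symbToRn{a}}$ becomes $\conj{a}$; the $\xi$-derivatives are unaffected since the change of variables only touches the base point. This gives precisely the claimed expansion, with the remainder of order $m - (\rho-\delta)N$ lying in $\symbClassOn{m-(\rho-\delta)N}{\rho}{\delta}{\group \times \R^n}$ by the same class bijection applied to the Euclidean remainder. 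The one step requiring genuine care---and the place where I would be most careful---is the unitarity of $\pull{t}$ and the resulting claim $\pull{t}^{*} = \pull{x}$, since this is what guarantees that the \emph{analytic} adjoint and the \emph{symbolic} adjoint match; once this compatibility is pinned down via \eqref{eq:innerProducts}, the rest is a formal transport of the Euclidean result, and no new analytic estimates are needed.
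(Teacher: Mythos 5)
Your proposal is correct and follows essentially the same route as the paper: both reduce to the Euclidean adjoint theorem via the correspondence \eqref{eq:PsiDOCorrespondence}, use the unitarity of \(\pull{t}\) coming from \eqref{eq:innerProducts} to identify the \(L^2(\group)\)-adjoint with the transported Euclidean adjoint, and transfer the asymptotic expansion through the canonical symbol-class bijection. The only cosmetic difference is that you phrase the key step as the operator identity \((\pull{t}\,\tilde{A}\,\pull{x})^{*} = \pull{t}\,\tilde{A}^{*}\,\pull{x}\), whereas the paper verifies the same fact by pairing against \(f,g \in \SchwartzSpaceOn{\R^n}\) and using that \(\pull{t}\) is a bijection onto \(\SchwartzSpaceOn{\group}\).
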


\begin{proof} Let \(a \in \symbClassOn{m}{\rho}{\delta}{\group \times \R^n}\). Then \(\symbToRn{a} \in \symbClassOn{m}{\rho}{\delta}{\R^n \times \R^n}\) so that the adjoint formula in \(\R^n\) 
\begin{equation}\label{eq:asympAdjA}
        \adj{\left(\symbToRn{a}\right)}(t,\xi) \sim \sum_{\alpha} \frac{(2 \pi i)^{-\abs{\alpha}}}{\alpha!} \pDiffToOrd{\xi}{\alpha} \pDiffToOrd{t}{\alpha} \conj{\symbToRn{a}(t,\xi)}.
\end{equation}
{Due to \eqref{eq:PsiDOCorrespondence} it follows that}
\[
\opRnOf{\symbToRn{a}} = \pull{x} a(x,\mathfrak{D}) \pull{t}.
\]
Let \(\adj{a} := \symbToGrp{\left[\adj{\left(\symbToRn{a}\right)}\right]}\). Using \eqref{eq:innerProducts} we find on the one hand for every \(f,g \in \SchwartzSpaceOn{\R^n}\) that
\begin{align*}
        \inProdIn{\adj{\opRnOf{\symbToRn{a}}}f}{g}{L^{2}(\R^n)}
        &= \inProdIn{f}{\opRnOf{\symbToRn{a}}g}{L^{2}(\R^n)}
        = \inProdIn{f}{\pull{x}a(x,\mathfrak{D})\pull{t}g}{L^{2}(\R^n)} \\
        &= \inProdIn{\pull{t}f}{a(x,\mathfrak{D})\pull{t} g}{L^{2}(\group)}
        = \inProdIn{a(x,\mathfrak{D})^\ast\pull{t} f}{\pull{t} g}{L^{2}(\group)},
\end{align*}
while on the other
    \begin{align*}
        \inProdIn{\adj{\opRnOf{\symbToRn{a}}}f}{g}{L^{2}(\R^n)}
        = \inProdIn{\opRnOf{\adj{\left(\symbToRn{a}\right)}} f}{g}{L^{2}(\R^n)}
        &= \inProdIn{\pull{t} \opRnOf{\adj{\left(\symbToRn{a}\right)}} \pull{x} \pull{t} f}{\pull{t} g}{L^{2}(\group)} \\
        &= \inProdIn{a^\ast(x,\mathfrak{D}\pull{t} f}{\pull{t} g}{L^{2}(\group)}.
    \end{align*}
    Since \(\pull{t}\) is a bijection from \(\SchwartzSpaceOn{\R^n}\) to \(\SchwartzSpaceOn{\group}\), it follows that \(a^\ast(x,\mathfrak{D})=a(x,\mathfrak{D})^\ast\).

    Applying the bijective correspondence between \(\symbClassOn{m}{\rho}{\delta}{\R^n \times \R^n}\) and \(\symbClassOn{m}{\rho}{\delta}{\group \times \R^n}\) to \eqref{eq:asympAdjA}, we get
    \[\adj{a}(x,\xi) - \sum_{\abs{\alpha}<N} \frac{(2 \pi i)^{-\abs{\alpha}}}{\alpha!} \pDiffToOrd{\xi}{\alpha} \pDiffGrpToOrd{x}{\alpha} \conj{a(x,\xi)} \in \symbClassOn{m-(\rho-\delta)N}{\rho}{\delta}{\group \times \R^n}\]
    for any \(N \in \N\), which proves the theorem.
\end{proof}
\subsection{Construction of parametrices} Now we will prove that the classes $S^{m}_{\rho,\delta}(G\times \R^n)$ admit the construction of parametrices. For this, the condition of ellipticity for the symbol is required. A symbol $a\in {S}^{m}_{\rho,\delta}(G\times \R^n)$ is elliptic of order $m,$ if there exists $R>0,$ such that the inequality condition in \eqref{Iesparametrix} holds.  For the corresponding operator $A=a(x,D)$ we compute the parametrix in the next theorem.

\begin{theorem}
 \label{IesTParametrix} Let $m\in \R,$ and let $0\leqslant \delta<\rho\leqslant 1.$  Let  $a=a(x,\xi)\in {S}^{m}_{\rho,\delta}(G\times \R^n).$  Assume also that $a(x,\xi)\neq 0$  for $|\xi|\geq R,$ for some $R>0,$ and it satisfies
\begin{equation}\label{Iesparametrix}
   \inf_{(x,\xi)\in G\times \{\xi\in \R^n:|\xi|\geq R\}} \vert a(x,\xi)\vert\geq C(1+|\xi|)^{m}.
\end{equation}Then, there exists $B\in \Psi^{-m}_{\rho,\delta}(G\times \R^n),$ such that $$ AB-I,BA-I\in \Psi^{-\infty}(G\times \R^n):=\bigcap_{s\in \R} \Psi^{m}(G\times \R^n).$$ Moreover, the symbol $\tau(x,\xi)$ of $B$ satisfies the following asymptotic expansion
\begin{equation}\label{AE}
    \tau(x,\xi)\sim \sum_{N=0}^\infty\tau_{N}(x,\xi),\,\,\,(x,\xi)\in G\times \R^n,
\end{equation}where $\tau_{N}\in {S}^{-m-(\rho-\delta)N}_{\rho,\delta}(G\times \R^n)$ obeys to the recursive  formula
\begin{equation}\label{conditionelip}
    \tau_{N}(x,\xi)=-a(x,\xi)^{-1}\left(\sum_{k=0}^{N-1}\sum_{|\gamma|=N-k}(\partial_{\xi}^\gamma a(x,\xi))(\mathfrak{D}_{x}^{\gamma}\tau_{k}(x,\xi))\right),\,\,N\geqslant 1,
\end{equation}with $ \tau_{0}(x,\xi)=a(x,\xi)^{-1}.$
\end{theorem}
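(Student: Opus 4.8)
The plan is to reduce the entire construction to the Euclidean parametrix theorem through the correspondence \eqref{eq:PsiDOCorrespondence}, in exactly the same spirit as the preceding composition and adjoint theorems. First I would pass to the Euclidean symbol $\symbToRn{a}(t,\xi) = a(x(t),\xi) \in \symbClassOn{m}{\rho}{\delta}{\R^n \times \R^n}$. Since $x : \R^n \to G$ is a bijection and the ellipticity hypothesis \eqref{Iesparametrix} is uniform in $x \in G$, it transfers verbatim: for all $t \in \R^n$ and $\abs{\xi} \geqslant R$ one has $\abs{\symbToRn{a}(t,\xi)} = \abs{a(x(t),\xi)} \geqslant C(1+\abs{\xi})^m$. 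Thus $\symbToRn{a}$ is an elliptic H\"ormander symbol of order $m$ on $\R^n$, and the classical Euclidean parametrix construction applies to $\opRnOf{\symbToRn{a}}$.

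This yields a symbol $b \in \symbClassOn{-m}{\rho}{\delta}{\R^n \times \R^n}$ with $\opRnOf{\symbToRn{a}}\,b(t,D) - I$ and $b(t,D)\,\opRnOf{\symbToRn{a}} - I$ in $\Psi^{-\infty}(\R^n \times \R^n)$, whose symbol admits the expansion $b \sim \sum_N b_N$ with $b_0 = (\symbToRn{a})^{-1}$ and the standard recursion $b_N = -(\symbToRn{a})^{-1}\sum_{k<N}\sum_{\abs{\gamma}=N-k}(\pDiffToOrd{\xi}{\gamma}\symbToRn{a})(\pDiffToOrd{t}{\gamma}b_k)$. I would then transfer back by setting $\tau := \symbToGrp{b}$ and $B := \tau(x,\mathfrak{D})$, so that $B = \pull{t}\,b(t,D)\,\pull{x}$ by \eqref{eq:PsiDOCorrespondence}. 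Using $\pull{x}\pull{t} = \mathrm{id}$ and the correspondence once more,
\begin{align*}
AB &= \pull{t}\,\opRnOf{\symbToRn{a}}\,\pull{x}\,\pull{t}\,b(t,D)\,\pull{x} = \pull{t}\bigl[\opRnOf{\symbToRn{a}}\,b(t,D)\bigr]\pull{x} = I + \pull{t}\,r\,\pull{x},
\end{align*}
where $r \in \Psi^{-\infty}(\R^n \times \R^n)$, and symmetrically for $BA$. Because the bijection $\sigma \mapsto \symbToGrp{\sigma}$ between H\"ormander classes preserves all orders at once, it maps $\Psi^{-\infty}(\R^n \times \R^n)$ into $\Psi^{-\infty}(\group \times \R^n)$, giving $AB - I, BA - I \in \Psi^{-\infty}(\group \times \R^n)$.

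For the asymptotic expansion \eqref{AE}--\eqref{conditionelip} I would put $\tau_N := \symbToGrp{b_N}$. Iterating \eqref{eq:derivativeTransform1} gives $\pDiffGrpToOrd{x}{\gamma}\symbToGrp{f} = \symbToGrp{(\pDiffToOrd{t}{\gamma}f)}$, while $\pDiffToOrd{\xi}{\gamma}$ acts identically on both sides (since $t(x)$ is $\xi$-independent) and inversion commutes with $\symbToGrp{(\cdot)}$, so that $\symbToGrp{((\symbToRn{a})^{-1})} = a^{-1}$. Applying $\symbToGrp{(\cdot)}$ to the Euclidean recursion therefore produces precisely \eqref{conditionelip} with $\tau_0 = a^{-1}$, and the order count $\tau_N \in \symbClassOn{-m-(\rho-\delta)N}{\rho}{\delta}{\group \times \R^n}$ follows from the Euclidean statement together with the order-preserving property of $\symbToGrp{(\cdot)}$.

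The main obstacle is purely organisational rather than analytic: one must verify that $\symbToGrp{(\cdot)}$ intertwines the Euclidean recursion with \eqref{conditionelip} --- that $\pDiffGrpToOrd{x}{\gamma}$ is the correct image of $\pDiffToOrd{t}{\gamma}$ on symbols, which is exactly \eqref{eq:derivativeTransform1}, and that inversion and $\xi$-differentiation commute with the transfer --- and that $\Psi^{-\infty}$ is stable under the correspondence. All of these are immediate from the machinery already in place, so no genuinely new difficulty arises beyond the classical Euclidean parametrix theorem, which may be cited directly.
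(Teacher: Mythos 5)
Your proposal is correct, but it follows a genuinely different route from the paper's own proof of this theorem. You reduce the whole problem to the Euclidean case: transfer the elliptic symbol to $\symbToRn{a} \in \symbClassOn{m}{\rho}{\delta}{\R^n \times \R^n}$ (noting that the uniform ellipticity \eqref{Iesparametrix} transfers verbatim since $x$ is a bijection), invoke the classical Euclidean parametrix theorem as a black box, and pull the resulting symbol $b$ and its recursion back to $\group$ via $\tau = \symbToGrp{b}$, checking that the correspondence intertwines $\pDiffToOrd{t}{\gamma} \leftrightarrow \pDiffGrpToOrd{x}{\gamma}$, commutes with $\pDiffToOrd{\xi}{\gamma}$, with pointwise inversion and products, and preserves all orders simultaneously (hence maps $\Psi^{-\infty}(\R^n \times \R^n)$ onto $\Psi^{-\infty}(\group \times \R^n)$). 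The paper instead carries out the recursive construction \emph{directly on $\group$}: it uses the composition expansion $\widehat{\mathcal{I}}(x,\xi) \sim \sum_{\alpha}(\pDiffToOrd{\xi}{\alpha}a)(\pDiffGrpToOrd{x}{\alpha}\tau)$ for $\mathcal{I}=AB$ (available from the composition theorem proved earlier), defines $\tau_N$ by \eqref{conditionelip}, verifies the orders $\tau_N \in \symbClassOn{-m-(\rho-\delta)N}{\rho}{\delta}{\group\times\R^n}$ by induction, and shows that the partial sums telescope to $1$ modulo $\symbClassOn{-(\rho-\delta)N}{\rho}{\delta}{\group\times\R^n}$ for every $N$. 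Your approach is shorter, and is actually more consistent with the pattern of the paper's other proofs (composition, adjoint, $L^p$-boundedness, G\r{a}rding, Gohberg, index), all of which proceed by transfer; what it outsources is precisely the combinatorial content that the paper's proof makes explicit, namely \emph{why} the recursion \eqref{conditionelip} is forced by requiring cancellation in the composition expansion. One shared loose end, present in both arguments and standard in this subject: $a^{-1}$ (respectively $(\symbToRn{a})^{-1}$) is only defined for $\abs{\xi} \geqslant R$, so strictly speaking $\tau_0$ should carry an excision function in $\xi$; since this affects neither the asymptotics nor the smoothing property, it is harmless, but worth a sentence if you write the argument up in full.
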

\begin{proof}
The idea is to find a symbol $\tau$ such that if $\mathcal{I}=AB,$ then $\mathcal{I}-I$ is a smoothing operator, where
\begin{align*}
  \widehat{\mathcal{I}}(x,\xi)  \sim \sum_{|\alpha|= 0}^\infty( \partial_{\xi}^{\alpha} a(x,\xi))(\partial_{X}^{(\alpha)} \tau(x,\xi)).
\end{align*} Here $\widehat{\mathcal{I}}(x,\xi)$ denotes the symbol of $\mathcal{I}.$ The asymptotic expansion means that for every $N\in \mathbb{N},$
\begin{align*}
    &\partial_{\xi}^{\alpha_\ell}\mathfrak{D}_{x}^{\alpha_\ell}\left(\widehat{\mathcal{I}}(x,\xi)-\sum_{|\alpha|\leqslant N}  ( \partial_{\xi}^{\alpha} a(x,\xi))(\mathfrak{D}_{x}^{\alpha} \tau(x,\xi))  \right)\\
    &\hspace{3cm}\in {S}^{-(\rho-\delta)(N+1)-\rho\ell+\delta|\beta|}_{\rho,\delta}(G\times \R^n),
\end{align*}for every $\alpha_\ell\in \mathbb{N}_0$ of order $\ell\in\mathbb{N}_0,$ where $\tau$ is requested to satisfy the asymptotic expansion  \eqref{AE}. So, formally we can write
\begin{align*}
  \widehat{\mathcal{I}}(x,\xi) &\sim \sum_{|\alpha|= 0}^\infty( \partial_{\xi}^{\alpha} a(x,\xi))(\mathfrak{D}_{x}^{\alpha} \tau(x,\xi))=\sum_{|\alpha|= 0}^\infty\sum_{N=0}^{\infty}( \partial_{\xi}^{\alpha} a(x,\xi))(\mathfrak{D}_{x}^{\alpha} \tau_N(x,\xi)).
\end{align*}
  Observe {the fact that} $\tau_{0}\in {S}^{-m}_{\rho,\delta}(G\times \R^n) $    follows from the hypothesis. Now, one can check easily that $\tau_{N}\in {S}^{-m-(\rho-\delta)N}_{\rho,\delta}(G\times \R^n),$ for all $N\geqslant 1$ by using induction. Consequently,
  \begin{align*}
      \tau(x,\xi)- \sum_{j=0}^{N-1}\tau_{j}(x,\xi)\in {S}^{-m-(\rho-\delta)N}_{\rho,\delta}(G\times \R^n).
  \end{align*}This analysis allows us to deduce that
  \begin{align*}
    \widehat{\mathcal{I}}(x,\xi)-\sum_{k=0}^{N-1}\sum_{|\gamma|<N}(\partial_{\xi}^\gamma a(x,\xi))(\mathfrak{D}_{x}^{\gamma}\tau_{k}(x,\xi))\in {S}^{-(\rho-\delta)N}_{\rho,\delta}(G\times \R^n).
  \end{align*}On the other hand, observe that
  \begin{align*}
   &\sum_{k=0}^{N-1}\sum_{|\gamma|< N}(\partial_{\xi}^{\gamma} a(x,\xi))(\mathfrak{D}_{x}^{\gamma}\tau_{k}(x,\xi))\\
  & =1+\sum_{k=1}^{N-1}\left(a(x,\xi)\tau_{k}(x,\xi)+\sum_{ |\gamma|\leqslant N,\,|\gamma|\geqslant 1}(\partial_{\xi}^{\gamma} a(x,\xi))(\mathfrak{D}_{x}^{\gamma}\tau_{k}(x,\xi))\right)\\
  & =1+\sum_{k=1}^{N-1}\left(a(x,\xi)\tau_{k}(x,\xi)+\sum_{ |\gamma|= N-j,\,j<k}(\partial_{\xi}^{\gamma} a(x,\xi))(\mathfrak{D}_{x}^{\gamma}\tau_{j}(x,\xi))\right)\\
  &\hspace{2cm}+\sum_{ |\gamma|+j\geqslant  N,\,|\gamma|<N,\,j<N}(\partial_{\xi}^{\gamma} a(x,\xi))(\mathfrak{D}_{x}^{\gamma}\tau_{k}(x,\xi))\\
   &=1+\sum_{ |\gamma|+j\geqslant  N,\,|\gamma|<N,\,j<N}(\partial_{\xi}^{\gamma} a(x,\xi))(\mathfrak{D}_{x}^{\gamma}\tau_{j}(x,\xi)),
  \end{align*}where we have used that
  $$ a(x,\xi)\tau_{k}(x,\xi)+\sum_{ |\gamma|= k-j,\,j<k}(\partial_{\xi}^{\gamma} a(x,\xi))(\mathfrak{D}_{x}^{\gamma}\tau_{j}(x,\xi))\equiv 0,  $$
  in view of \eqref{conditionelip}. Because, for $|\gamma|+j\geqslant  N,$ $(\partial_{\xi}^{\gamma} a(x,\xi))(\mathfrak{D}_{x}^{\gamma}\tau_{k}(x,\xi))\in {S}^{-(\rho-\delta)N}_{\rho,\delta}(G\times \R^n), $ it follows that
  \begin{align*}
      \sum_{k=0}^{N-1}\sum_{|\gamma|< N}(\partial_{\xi}^{\gamma} a(x,\xi))(\mathfrak{D}_{x}^{\gamma}\tau_{k}(x,\xi))-1\in  {S}^{-(\rho-\delta)N}_{\rho,\delta}(G\times \R^n),
  \end{align*}and consequently, $\widehat{\mathcal{I}}(x,\xi)-1\in  {S}^{-(\rho-\delta)N}_{\rho,\delta}(G\times \R^n),$ for every $N\in \mathbb{N}.$
 So, we have proved that  $AB-I\in {S}^{-\infty}(G\times \R^n). $ A similar analysis can be used to prove that $BA-I\in {S}^{-\infty}(G\times \R^n).$
\end{proof}

\section{\bf Boundedness properties for the H\"ormander classes $\Psi^{m}_{\rho,\delta}(G\times \R^n) $ }\label{mapping:properties}
In this section we discuss the mapping properties of pseudo-differential operators even in the border line case $0\leq\rho\leq \delta\leq 1,$ $\delta\neq 1.$
\subsection{The \texorpdfstring{$L^p$}{Lp}-theory: $L^p$-Fefferman theorem and $L^2$-Calder\'on-Vaillancourt theorem}

The following theorem provides sharp conditions for the $L^p$-boundedness of the pseudo-differential operators in the H\"ormander classes $\Psi^{m}_{\rho,\delta}(G\times \R^n).$

\begin{theorem}\label{MainTheorem}
 Let $A : \SchwartzSpaceOn{\group} \to \SchwartzSpaceOn{\group}$ be a pseudo-differential operator with symbol $\sigma \in \symbClassOn{-m}{\rho}{\delta}{\group \times \R^n}$, $0 \leqslant \delta \leqslant \rho \leqslant 1$, $\delta \neq 1$. Then, if $$m \geqslant m_{p}:= n(1-\rho)\abs{\frac{1}{p}-\frac{1}{2}},$$ where $1 < p < \infty$, then $A$ extends to a bounded operator from $L^p(\group)$ into $ L^p(\group)$.
\end{theorem}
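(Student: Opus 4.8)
The plan is to prove the theorem by transference, reducing the $L^p$-boundedness on $\group$ to the classical $L^p$-boundedness theorem of Fefferman on the Euclidean space $\R^n$. The two structural ingredients are already in place: first, the identification \eqref{eq:PsiDOCorrespondence}, which writes $A = \sigma(x,\mathfrak{D}) = \pull{t} \circ \opRnOf{\symbToRn{\sigma}} \circ \pull{x}$ as a conjugate of a Euclidean pseudo-differential operator; and second, the fact recorded in \cref{subsection:funcSpaces} that $\pull{t} : L^p(\R^n) \to L^p(\group)$ is an $L^p$-isometry for every $1 \leqslant p \leqslant \infty$, with isometric inverse $\pull{x} = (\pull{t})^{-1} : L^p(\group) \to L^p(\R^n)$.

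First I would use the canonical bijection between the symbol classes $\symbClassOn{-m}{\rho}{\delta}{\group \times \R^n}$ and $\symbClassOn{-m}{\rho}{\delta}{\R^n \times \R^n}$: since $\sigma \in \symbClassOn{-m}{\rho}{\delta}{\group \times \R^n}$ by hypothesis, its Euclidean counterpart $\symbToRn{\sigma}(t,\xi) = \sigma(x(t),\xi)$ lies in $\symbClassOn{-m}{\rho}{\delta}{\R^n \times \R^n}$, so that $\tilde{A} := \opRnOf{\symbToRn{\sigma}} \in \PsiDOHor{-m}{\rho}{\delta}{\R^n \times \R^n}$. Next I would invoke the classical Fefferman type theorem on $\R^n$, valid under the same conditions $0 \leqslant \delta \leqslant \rho \leqslant 1$ with $\delta \neq 1$: any pseudo-differential operator whose symbol belongs to $\symbClassOn{-m}{\rho}{\delta}{\R^n \times \R^n}$ with $m \geqslant n(1-\rho)\abs{\frac1p - \frac12}$ extends to a bounded operator on $L^p(\R^n)$ for every $1 < p < \infty$. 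Applied to $\tilde{A}$, this yields a constant $C > 0$ with $\LpnormOver{p}{\R^n}{\tilde{A} g} \leqslant C \LpnormOver{p}{\R^n}{g}$ for all $g \in \SchwartzSpaceOn{\R^n}$.

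The last step transfers this estimate back to $\group$ using the isometry property. For $f \in \SchwartzSpaceOn{\group}$ I would compute
\[\LpnormOver{p}{\group}{Af} = \LpnormOver{p}{\group}{\pull{t} \tilde{A} \pull{x} f} = \LpnormOver{p}{\R^n}{\tilde{A} \pull{x} f} \leqslant C \LpnormOver{p}{\R^n}{\pull{x} f} = C \LpnormOver{p}{\group}{f},\]
where the second and last equalities use that $\pull{t}$ and $\pull{x}$ preserve the $L^p$-norm. Since $\SchwartzSpaceOn{\group}$ is dense in $L^p(\group)$ for $1 \leqslant p < \infty$, the operator $A$ then extends uniquely to a bounded operator on $L^p(\group)$. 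I expect no genuine obstacle on the group side: the entire analytic difficulty is packaged into the classical Euclidean Fefferman theorem, while the transference via $\pull{t}$ and $\pull{x}$ is immediate once the isometry property and the symbol-class correspondence are in hand.
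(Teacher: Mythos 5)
Your proposal is correct and follows essentially the same route as the paper's own proof: both reduce to the Euclidean case via the correspondence $A = \pull{t}\,\opRnOf{\symbToRn{\sigma}}\,\pull{x}$, invoke Fefferman's $L^p$-theorem for the class $\symbClassOn{-m}{\rho}{\delta}{\R^n \times \R^n}$, and transfer the bound back to $\group$ using the $L^p$-isometry $\pull{t}$ together with density of $\SchwartzSpaceOn{\group}$ in $L^p(\group)$. Your explicit norm chain makes the transference step slightly more transparent than the paper's diagrammatic formulation, but the argument is the same.
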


\begin{proof}
Let $\tilde{A} : \SchwartzSpaceOn{\R^n} \to \SchwartzSpaceOn{\R^n}$ be the continuous linear operator such that the following diagram
\begin{eqnarray}
    \begin{tikzpicture}[every node/.style={midway}]
  \matrix[column sep={10em,between origins}, row sep={4em}] at (0,0) {
    \node(R) {$\SchwartzSpaceOn{\R^n}$}  ; & \node(S) {$\SchwartzSpaceOn{\R^n}$}; \\
    \node(R/I) {$\SchwartzSpaceOn{\group}$}; & \node (T) {$\SchwartzSpaceOn{\group}$};\\
  };
  \draw[<-] (R/I) -- (R) node[anchor=east]  {$\pull{t}$};
  \draw[->] (R) -- (S) node[anchor=south] {$\tilde{A}$};
  \draw[->] (S) -- (T) node[anchor=west] {$\pull{t}$};
  \draw[->] (R/I) -- (T) node[anchor=north] {$A$};
\end{tikzpicture}
\end{eqnarray}commutes. Then, $\tilde{A}$ is a pseudo-differential operator with symbol in the H\"ormander class $\symbClassOn{-m}{\rho}{\delta}{\R^n \times \R^n}$ and in view of Fefferman $L^p$-theorem (see \cite{Fefferman1973}),  the order condition $m \geqslant m_{p}:= n(1-\rho)\abs{\frac{1}{p}-\frac{1}{2}}$, implies that, for $1 < p < \infty$,
$$\tilde{A} : \SchwartzSpaceOn{\R^n} \subset L^p(\R^n) \to L^p(\R^n)$$ extends to a bounded operator. In view of the topological density of $\SchwartzSpaceOn{\R^n}$ into $L^p(\R^n),$ such an extension is unique and we can also denote it by $\tilde{A} : L^p(\R^n) \to L^p(\R^n)$. Now, there exists a unique operator $\dot{A} : L^p(\group) \to L^p(\group)$ such that the following diagram
\begin{eqnarray}
    \begin{tikzpicture}[every node/.style={midway}]
  \matrix[column sep={10em,between origins}, row sep={4em}] at (0,0) {
    \node(R) {$L^p(\R^n)$}  ; & \node(S) {$L^p(\R^n)$}; \\
    \node(R/I) {$L^p(\group)$}; & \node (T) {$L^p(\group)$};\\
  };
  \draw[<-] (R/I) -- (R) node[anchor=east]  {$\pull{t}$};
  \draw[->] (R) -- (S) node[anchor=south] {$\tilde{A}$};
  \draw[->] (S) -- (T) node[anchor=west] {$\pull{t}$};
  \draw[->] (R/I) -- (T) node[anchor=north] {$\dot{A}$};
\end{tikzpicture}
\end{eqnarray}commutes. It is clear that $\dot{A}|_{\SchwartzSpaceOn{\group}} = A.$ So, we have proven that $A$ admits a bounded extension on $L^p(\group)$.
\end{proof}
 With $p=2,$ $m=0$ and $0\leq \delta\leq \rho\leq 1,$ {$\delta\neq 1,$} in Theorem \ref{MainTheorem} we obtain the following analogue  of the Calder\'on-Vaillancourt theorem.
 \begin{corollary}\label{Calderon-Vaillancourth}
 Let $A : \SchwartzSpaceOn{\group} \to \SchwartzSpaceOn{\group}$ be a pseudo-differential operator with symbol $\sigma \in \symbClassOn{0}{\rho}{\delta}{\group \times \R^n}$, $0 \leqslant \delta \leqslant \rho \leqslant 1$, $\delta \neq 1$. Then, $A$ extends to a bounded operator from $L^2(\group)$ into $ L^2(\group)$.
\end{corollary}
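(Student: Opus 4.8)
The plan is to observe that Corollary \ref{Calderon-Vaillancourth} is nothing more than the special case $p=2$, $m=0$ of the $L^p$-Fefferman estimate already established in Theorem \ref{MainTheorem}; accordingly, the proof reduces to checking that the hypotheses of that theorem are met and that its order threshold degenerates to the trivial one at $p=2$.

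First I would match the parameters. The given symbol $\sigma$ lies in $\symbClassOn{0}{\rho}{\delta}{\group \times \R^n}$, which is precisely $\symbClassOn{-m}{\rho}{\delta}{\group \times \R^n}$ with $m=0$, and the index constraint $0 \leqslant \delta \leqslant \rho \leqslant 1$, $\delta \neq 1$ is identical to the one assumed in Theorem \ref{MainTheorem}. Next I would evaluate the Fefferman order threshold at $p=2$: since
\[
m_{2} = n(1-\rho)\abs{\frac{1}{2}-\frac{1}{2}} = 0,
\]
the requirement $m \geqslant m_{p}$ of Theorem \ref{MainTheorem} becomes $0 \geqslant 0$, which holds. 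As $2 \in (1,\infty)$, every hypothesis of Theorem \ref{MainTheorem} is satisfied.

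Applying Theorem \ref{MainTheorem} with these choices then yields immediately that $A$ extends to a bounded operator from $L^2(\group)$ into $L^2(\group)$, which is the assertion. There is no genuine obstacle at this level: the entire content resides in Theorem \ref{MainTheorem} (which itself rests on the pullback intertwining \eqref{eq:PsiDOCorrespondence} that transports the question to $\R^n$, together with the classical Fefferman $L^p$-theorem), whereas the corollary is a direct specialization. The only conceptually noteworthy point is that the Fefferman loss $n(1-\rho)\abs{\frac{1}{p}-\frac{1}{2}}$ vanishes exactly at $p=2$; this is precisely what permits the zero-order class $\symbClassOn{0}{\rho}{\delta}{\group \times \R^n}$ to act boundedly on $L^2(\group)$ with no gain in regularity required.
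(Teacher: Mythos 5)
Your proposal is correct and coincides with the paper's own treatment: the paper derives Corollary \ref{Calderon-Vaillancourth} precisely by specializing Theorem \ref{MainTheorem} to $p=2$, $m=0$, exactly as you do, with the threshold $m_{2}=n(1-\rho)\abs{\tfrac12-\tfrac12}=0$ making the order condition trivially satisfied. Nothing further is needed.
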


\subsection{The sharp G\r{a}rding inequality and the Fefferman-Phong inequality} In this section we discuss the lower bounds for operators.

\begin{theorem}\label{S:Garding:Inequality}
 Let $A : \SchwartzSpaceOn{\group} \to \SchwartzSpaceOn{\group}$ be a pseudo-differential operator with symbol $a\in \symbClassOn{m}{\rho}{\delta}{\group \times \R^n}$ such that {$a(x,\xi)\geq 0$ for all $(x,\xi)\in G\times\R^n.$}
 Then the following sharp G\r{a}rding inequality
 \begin{equation}
     \forall u\in \mathcal{S}(G),\,\,\textnormal{Re}(a(x,D)u,u)_{L^2}\geq -C\Vert u\Vert_{H^{\frac{m-(\rho-\delta)}{2}}(G)}
 \end{equation}holds. Moreover, if $\sigma \in \symbClassOn{2}{1}{0}{\group \times \R^n}$ and $\sigma(x,\xi)\geq 0$ for all $(x,\xi)\in G\times\R^n ,$ then the Fefferman-Phong inequality
 \begin{equation}
     \forall u\in \mathcal{S}(G),\,\,\textnormal{Re}(a(x,D)u,u)_{L^2}\geq -C\Vert u\Vert_{L^2(G)}
 \end{equation} remains valid.
\end{theorem}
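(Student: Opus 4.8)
The plan is to reduce both inequalities to their classical Euclidean counterparts by means of the pull-back $\pull{t}$, exactly in the spirit of the proof of \cref{MainTheorem}. Writing $A = a(x,\mathfrak{D})$ and $\symbToRn{a}(t,\xi) := a(x(t),\xi)$, the correspondence \eqref{eq:PsiDOCorrespondence} gives $\pull{t}\, \opRnOf{\symbToRn{a}} = A\, \pull{t}$, so that $\tilde{A} := \opRnOf{\symbToRn{a}}$ is a pseudo-differential operator on $\R^n$ with symbol $\symbToRn{a} \in \symbClassOn{m}{\rho}{\delta}{\R^n \times \R^n}$. The essential point is that the hypothesis $a(x,\xi) \geqslant 0$ on $\group \times \R^n$ transfers verbatim to $\symbToRn{a}(t,\xi) = a(x(t),\xi) \geqslant 0$ on $\R^n \times \R^n$, because $x(t) \in \group$ for every $t \in \R^n$. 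Thus $\tilde{A}$ is a nonnegative-symbol operator on $\R^n$ of the same $(\rho,\delta)$-type and order.

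First I would record that the transference is an isometry at the level of inner products and Sobolev norms. By \eqref{eq:innerProducts} the map $\pull{t}$ is an $L^2$-isometry, and since \eqref{eq:derivativeTransform1} gives $\pull{t}\, \partial_{t_j} = \pDiffGrpTo{x_j}\, \pull{t}$, the Laplacians intertwine as $\pull{t}\,(1 - \Delta_{\R^n})^{s/2} = (1 - \Delta_G)^{s/2}\, \pull{t}$. Combining these two facts upgrades the isomorphism \eqref{e6.6} (at $p=2$) to an exact isometry $\pull{t} : H^s(\R^n) \to H^s(\group)$ for every $s \in \R$. Consequently, for $u \in \SchwartzSpaceOn{\group}$ and $v := \pull{x} u \in \SchwartzSpaceOn{\R^n}$ we have $u = \pull{t} v$ and
\[
(Au, u)_{L^2(\group)} = (\pull{t} \tilde{A} v,\, \pull{t} v)_{L^2(\group)} = (\tilde{A} v, v)_{L^2(\R^n)}, \qquad \|u\|_{H^s(\group)} = \|v\|_{H^s(\R^n)}.
\]

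Next I would invoke the classical lower bounds on $\R^n$. For the first assertion, the Euclidean sharp G\r{a}rding inequality (valid for $0 \leqslant \delta < \rho \leqslant 1$ and a nonnegative symbol in $\symbClassOn{m}{\rho}{\delta}{\R^n \times \R^n}$) yields $\textnormal{Re}(\tilde{A} v, v)_{L^2(\R^n)} \geqslant -C \|v\|^2_{H^{(m-(\rho-\delta))/2}(\R^n)}$; transporting both sides through the isometries above gives the claimed inequality on $\group$. For the second assertion, the identical scheme applies with the Euclidean Fefferman-Phong inequality for nonnegative symbols in $\symbClassOn{2}{1}{0}{\R^n \times \R^n}$, whose $L^2$-remainder transfers as $-C\|v\|^2_{L^2(\R^n)} = -C\|u\|^2_{L^2(\group)}$.

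The proof is in essence a bookkeeping of the transference, so there is no deep obstacle; the only points requiring care are (i) verifying that $\pull{t}$ is an \emph{exact} isometry on $H^s(\group)$ — which comes from the intertwining of the Laplacians rather than from the bare isomorphism \eqref{e6.6} — and (ii) ensuring the parameter ranges match the hypotheses of the classical inequalities (namely $\delta < \rho$, so that the gain $\rho-\delta$ is genuine, for sharp G\r{a}rding, and the single class $\symbClassOn{2}{1}{0}{}$ for Fefferman-Phong). With these in place the Euclidean theorems transfer directly, which completes the argument.
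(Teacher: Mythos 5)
Your proposal is correct and follows essentially the same route as the paper's own proof: transfer $A$ to the Euclidean operator $\tilde{A}$ via the correspondence \eqref{eq:PsiDOCorrespondence}, invoke H\"ormander's sharp G\r{a}rding inequality and the Fefferman--Phong inequality on $\R^n$, and carry the conclusion back using that $\pull{t}$ is an isometry for the $L^2$ and Sobolev norms. Your explicit justification of the Sobolev isometry through the intertwining of the Laplacians, and your remark that nonnegativity of the symbol transfers, merely fill in details the paper asserts without comment, so the two arguments coincide in substance.
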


\begin{proof}
Let $\tilde{A} : \SchwartzSpaceOn{\R^n} \to \SchwartzSpaceOn{\R^n}$ be the continuous linear operator such that the following diagram
\begin{eqnarray}
    \begin{tikzpicture}[every node/.style={midway}]
  \matrix[column sep={10em,between origins}, row sep={4em}] at (0,0) {
    \node(R) {$\SchwartzSpaceOn{\R^n}$}  ; & \node(S) {$\SchwartzSpaceOn{\R^n}$}; \\
    \node(R/I) {$\SchwartzSpaceOn{\group}$}; & \node (T) {$\SchwartzSpaceOn{\group}$};\\
  };
  \draw[<-] (R/I) -- (R) node[anchor=east]  {$\pull{t}$};
  \draw[->] (R) -- (S) node[anchor=south] {$\tilde{A}$};
  \draw[->] (S) -- (T) node[anchor=west] {$\pull{t}$};
  \draw[->] (R/I) -- (T) node[anchor=north] {$A$};
\end{tikzpicture}
\end{eqnarray}commutes. Then, $\tilde{A}$ is a pseudo-differential operator with {the} symbol in the H\"ormander class $\symbClassOn{-m}{\rho}{\delta}{\R^n \times \R^n}$ and in view of the sharp G\r{a}rding inequality  for the classes $S^m_{\rho,\delta}(\R^n\times \R^n )$ (see H\"ormander \cite{Hormander1966}) {it has} the lower bound
\begin{equation}
     \forall \tilde{u}\in \mathcal{S}(\R^n),\,\,\textnormal{Re}(\tilde{A}\tilde{u},\tilde{u})_{L^2(\R^n)}\geq -C\Vert \tilde{u}\Vert_{H^{\frac{m-(\rho-\delta)}{2}}(\R^n)}.
 \end{equation}
Using that any $u\in \mathcal{S}(G)$ can be written in a unique way as $\tilde{u}=(t_{*})^{-1}u,$ and that $$t_{*}:L^2(\R^n)\rightarrow L^2(G),\,\, (t_{*})^{-1}:L^2(G)\rightarrow L^2(\R^n)$$ are isometries of Hilbert spaces, we have for $u\in \mathcal{S}(G)$ the identity of inner products
\begin{align*}
   \textnormal{Re}(\tilde{A}(t_{*})^{-1}u,(t_{*})^{-1}u)_{L^2(\R^n)} &= \textnormal{Re}( ( t_{*})^{-1} t_{*}\tilde{A}(t_{*})^{-1}u,(t_{*})^{-1}u)_{L^2(\R^n)}\\
   &=\textnormal{Re}( ( t_{*})^{-1} Au,(t_{*})^{-1}u)_{L^2(\R^n)}\\
   &=\textnormal{Re}(  Au,u)_{L^2(G)}.
\end{align*}
Since
\begin{equation*}
    \Vert u\Vert_{H^{\frac{m-(\rho-\delta)}{2}}(G)}=\Vert \tilde{u}\Vert_{H^{\frac{m-(\rho-\delta)}{2}}(\R^n)},
\end{equation*}we conclude that
\begin{align*}
    \textnormal{Re}(  Au,u)_{L^2(G)}\geq -C\Vert u\Vert_{H^{\frac{m-(\rho-\delta)}{2}}(G)}
\end{align*}as desired. On the other hand, if $m=2,$ and $(\rho,\delta)=(1,0),$ the Fefferman-Phong inequality (see \cite{FeffermanPhong78}) gives the lower bound
\begin{equation}
     \forall \tilde{u}\in \mathcal{S}(\R^n),\,\,\textnormal{Re}(\tilde{A}\tilde{u},\tilde{u})_{L^2(\R^n)}\geq -C\Vert \tilde{u}\Vert_{L^2(\R^n)}.
 \end{equation}By following the analysis in the first part of the proof, and the fact that $\Vert u\Vert_{L^2(G)}=\Vert \tilde{u}\Vert_{L^2(\R^n)}$ implies the following  Fefferman-Phong inequality
\begin{equation}
     \forall {u}\in \mathcal{S}(G),\,\,\textnormal{Re}(Au,u)_{L^2(G)}\geq -C\Vert u\Vert_{L^2(G)}.
 \end{equation}
The proof is complete.
\end{proof}

\section{\bf Sharp spectral properties for the classes \texorpdfstring{$\symbClassOn{m}{\rho}{\delta}{\group \times \R^n}$}{SymbolClass}}\label{Spectral:1}

In this section we discuss the spectral properties of operators. We investigate the compactness and the  Fredholmness of the operators.
\subsection{The Gohberg lemma and  compactness on $L^2(G)$} In the following theorem we characterise the $L^2(G)$-compactness of a sub-class of operators in $\Psi^{0}_{1,0}(G\times \R^n).$ To do so, we compute the distance of a pseudo-differential operator $A$ to the set {$\mathscr{C}(L^2(G)$) of all} compact operator on $L^2(G)$ and we prove that this distance inequality is sharp.

\begin{theorem}\label{Gohberg:Lemma}
 Let $A:\mathcal{S}(G)\rightarrow \mathcal{S}(G)$ be a pseudo-differential operator with symbol $\sigma:G\times \R^n\rightarrow\mathbb{C}$ satisfying the symbol inequalities
 \begin{equation}\label{ineq:symbIneqGrp::22}
    \abs{\pDiffGrpToOrd{x}{\beta} \pDiffToOrd{\xi}{\alpha} \sigma(x,\xi)}
      \leqslant C_{\alpha,\beta} (1+\abs{\xi})^{-\abs{\alpha}}(1+|\ln(1-x^{2})|)^{-|\beta|}.
\end{equation}
Then $A\in \Psi^{0}(G\times \R^n)$ {and its essential norm has the following estimate from below
\begin{equation}
    |\!|\!| A|\!|\!|_{\mathscr{B}(L^2(G))}:=\inf_{K\in\mathscr{C}(L^2(G))}\|A-K \|\geq d:=\limsup_{|\xi|\rightarrow\infty,\,|x|\rightarrow 1 }|\sigma(x,\xi)|.
\end{equation}}
Moreover, $A$ is {compact, $A\in\mathscr{C}(L^2(G))$,} if and only if $d=0.$
\end{theorem}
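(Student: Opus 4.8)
The first assertion is immediate: the base $1+\abs{\ln(1-x^2)}$ in \eqref{ineq:symbIneqGrp::22} is everywhere $\geqslant 1$, so the factor $(1+\abs{\ln(1-x^2)})^{-\abs{\beta}}$ is bounded by $1$ and \eqref{ineq:symbIneqGrp::22} already yields $\abs{\pDiffGrpToOrd{x}{\beta}\pDiffToOrd{\xi}{\alpha}\sigma(x,\xi)}\leqslant C_{\alpha,\beta}(1+\abs{\xi})^{-\abs{\alpha}}$; hence $\sigma\in\symbClassOn{0}{1}{0}{\group\times\R^n}$ and $A\in\Psi^{0}(\group\times\R^n)$, which is $L^2(\group)$-bounded by \cref{Calderon-Vaillancourth}, so its essential norm is well defined. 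To analyse the essential norm I pass to the Euclidean picture. Put $\tilde a:=\symbToRn{\sigma}$, so that $\tilde a(t,\xi)=\sigma(x(t),\xi)$, and $\tilde A:=\opRnOf{\tilde a}$. By \eqref{eq:PsiDOCorrespondence} one has $A=\pull{t}\,\tilde A\,\pull{x}$, and since \eqref{eq:innerProducts} shows that $\pull{t}\colon L^2(\R^n)\to L^2(\group)$ is a unitary isomorphism with inverse $\pull{x}$, the operators $A$ and $\tilde A$ are unitarily equivalent. Conjugation by $\pull{t}$ maps $\mathscr{C}(L^2(\R^n))$ bijectively onto $\mathscr{C}(L^2(\group))$, so $|\!|\!| A|\!|\!|_{\mathscr{B}(L^2(\group))}=|\!|\!|\tilde A|\!|\!|_{\mathscr{B}(L^2(\R^n))}$ and $A$ is compact if and only if $\tilde A$ is. It therefore suffices to work on $\R^n$.

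\textbf{The transferred symbol.}
Iterating \eqref{eq:derivativeTransform2} gives $\pull{x}\,\pDiffGrpToOrd{x}{\beta}=\pDiffToOrd{t}{\beta}\,\pull{x}$, so evaluating at $x(t)=-\tanh t$ yields $\abs{\pDiffToOrd{t}{\beta}\pDiffToOrd{\xi}{\alpha}\tilde a(t,\xi)}=\abs{(\pDiffGrpToOrd{x}{\beta}\pDiffToOrd{\xi}{\alpha}\sigma)(x(t),\xi)}\leqslant C_{\alpha,\beta}(1+\abs{\xi})^{-\abs{\alpha}}(1+\abs{\ln(1-x(t)^2)})^{-\abs{\beta}}$. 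Because $1-x_j(t)^2=\cosh^{-2}t_j$ we have $\ln(1-x_j(t)^2)=-2\ln\cosh t_j$, from which the elementary equivalence $1+\abs{\ln(1-x(t)^2)}\asymp 1+\abs{t}$ holds uniformly on $\R^n$ (both sides are comparable to $1$ near the origin and to $\abs{t}$ as $\abs{t}\to\infty$). Hence $\tilde a$ obeys the product-type estimates $\abs{\pDiffToOrd{t}{\beta}\pDiffToOrd{\xi}{\alpha}\tilde a(t,\xi)}\leqslant C'_{\alpha,\beta}(1+\abs{\xi})^{-\abs{\alpha}}(1+\abs{t})^{-\abs{\beta}}$; in particular $\tilde a$ is bounded while all its derivatives decay. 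Moreover $(x,\xi)$ leaves every compact subset of $\group\times\R^n$ precisely when $\abs{(t,\xi)}\to\infty$ (since $x(t)\to\partial\group$ iff $\abs{t}\to\infty$), so the constant $d$ is correctly read as $d=\limsup_{\abs{(t,\xi)}\to\infty}\abs{\tilde a(t,\xi)}$. It remains to prove the Euclidean Gohberg lemma for such symbols. (Note that the joint limit is essential: the pure multiplier $\langle\xi\rangle^{-1}$ satisfies the hypotheses, is not compact, and indeed has $d=1$, attained along $\abs{t}\to\infty$ with $\xi\to 0$.)

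\textbf{The Euclidean Gohberg lemma.}
For the lower bound $|\!|\!|\tilde A|\!|\!|\geqslant d$ I would use approximate eigenfunctions. Choose $(t_k,\xi_k)$ with $\abs{(t_k,\xi_k)}\to\infty$ and $\abs{\tilde a(t_k,\xi_k)}\to d$, fix $\psi\in\CinftyCSOn{\R^n}$ with $\LpnormOver{2}{\R^n}{\psi}=1$, and set $\phi_k(t):=L_k^{-n/2}e^{2\pi i t\cdot\xi_k}\psi((t-t_k)/L_k)$, where the scale $L_k$ is chosen to balance the spatial and frequency oscillation of $\tilde a$ across the packet, so that $\max(L_k(1+\abs{t_k})^{-1},L_k^{-1}(1+\abs{\xi_k})^{-1})\to 0$ — possible exactly because $\abs{(t_k,\xi_k)}\to\infty$. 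Then $\LpnormOver{2}{\R^n}{\phi_k}=1$ and $\phi_k\rightharpoonup 0$ (the packet escapes in position or oscillates in frequency), so $K\phi_k\to 0$ for every compact $K$; and the slow variation of $\tilde a$ forces $\LpnormOver{2}{\R^n}{\tilde A\phi_k-\tilde a(t_k,\xi_k)\phi_k}\to 0$. Thus $\|\tilde A-K\|\geqslant\limsup_k\LpnormOver{2}{\R^n}{(\tilde A-K)\phi_k}\geqslant\limsup_k\abs{\tilde a(t_k,\xi_k)}=d$ for every compact $K$. For the converse, assume $d=0$, i.e. $\tilde a\to 0$ at phase-space infinity. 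Take a smooth cutoff $\chi_R$ equal to $1$ on $\{\abs{t},\abs{\xi}\leqslant R/2\}$, supported in $\{\abs{t},\abs{\xi}\leqslant R\}$, with derivatives of size $O(R^{-1})$. The operator with symbol $\chi_R\tilde a$ is compact (indeed Hilbert--Schmidt, its symbol having compact support), while the tail $(1-\chi_R)\tilde a$ has all Calder\'on--Vaillancourt seminorms tending to $0$ as $R\to\infty$ (the supremum because $d=0$; the derivatives because those landing on $\tilde a$ are small by the decay and those landing on $\chi_R$ gain factors $O(R^{-1})$). By the Calder\'on--Vaillancourt theorem $\|(1-\chi_R)\tilde a\,(t,D)\|\to 0$, so $\tilde A$ is a norm-limit of compact operators and is compact. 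Combined with the lower bound, $\tilde A$ — hence $A$ — is compact if and only if $d=0$.

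\textbf{Main obstacle.}
The delicate point is the lower bound, namely making the estimate $\LpnormOver{2}{\R^n}{\tilde A\phi_k-\tilde a(t_k,\xi_k)\phi_k}\to 0$ rigorous. This requires a Taylor expansion of $\tilde a$ about $(t_k,\xi_k)$ over the phase-space support of the wave packet together with the correct calibration of the scale $L_k$ against the separate decay rates of $\tilde a$ in $t$ and in $\xi$; the key observation is that the resulting error is controlled by $[(1+\abs{t_k})(1+\abs{\xi_k})]^{-1/2}$, which vanishes exactly under the hypothesis $\abs{(t_k,\xi_k)}\to\infty$. Everything else is routine: the unitary transfer justified by \eqref{eq:PsiDOCorrespondence}, \eqref{eq:innerProducts} and \eqref{eq:derivativeTransform2}, and the standard consequences of the Calder\'on--Vaillancourt bound.
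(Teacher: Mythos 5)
Your reduction to $\R^n$ coincides exactly with the paper's: the same unitary transfer $A=\pull{t}\,\tilde A\,\pull{x}$ justified by \eqref{eq:PsiDOCorrespondence} and \eqref{eq:innerProducts}, the same translation of \eqref{ineq:symbIneqGrp::22} into the SG-type estimates \eqref{SG:zero} (your verification that $1+\abs{\ln(1-x(t)^2)}\asymp 1+\abs{t}$ makes explicit what the paper only asserts), and the same remark that conjugation by $\pull{t}$ preserves compactness. Where you genuinely diverge is the Euclidean core: the paper stops at this point and \emph{cites} Grushin \cite{Grushin} for the essential-norm lower bound and Molahajloo \cite{Molahajloo} for the equivalence ``compact $\Leftrightarrow d=0$'', whereas you attempt a self-contained proof. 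Your wave-packet argument for the lower bound is the standard mechanism behind Gohberg-type lemmas, and the calibration $L_k\asymp[(1+\abs{t_k})/(1+\abs{\xi_k})]^{1/2}$, producing an error $O\bigl([(1+\abs{t_k})(1+\abs{\xi_k})]^{-1/2}\bigr)$, is the correct one. You also deserve credit for insisting that $d$ be read as $\limsup_{\abs{(t,\xi)}\to\infty}\abs{\tilde a(t,\xi)}$, i.e.\ over $(x,\xi)$ leaving every compact subset of $\group\times\R^n$, rather than as the simultaneous limit ``$\abs{t}\to\infty$ and $\abs{\xi}\to\infty$'' that the paper's notation literally suggests: your example $(1+\abs{\xi}^2)^{-1/2}$, a nonzero Fourier multiplier (hence never compact) whose simultaneous-limit $d$ equals $0$, shows that the compactness claim is false under the literal reading, so the phase-space reading is the only tenable one.

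There is, however, a genuine gap in your proof of the implication $d=0\Rightarrow \tilde A$ compact --- precisely the step you dismissed as routine. You claim that all Calder\'on--Vaillancourt seminorms of the tail symbol $b_R:=(1-\chi_R)\tilde a$ tend to $0$ as $R\to\infty$, justifying the derivative seminorms ``by the decay''. That justification fails: the estimates \eqref{SG:zero} give only $\abs{\partial_{\xi_j}\tilde a(t,\xi)}\leqslant C(1+\abs{\xi})^{-1}$, which is $O(1)$, not $o(1)$, on the portion of the support of $1-\chi_R$ where $\abs{t}\geqslant R/2$ but $\xi$ ranges over a bounded set; symmetrically, $\partial_t$-derivatives are only $O(1)$ where $\abs{\xi}$ is large and $t$ stays bounded. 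Hence the Leibniz terms $(1-\chi_R)\partial^\gamma\tilde a$ need not become small, and Calder\'on--Vaillancourt applied to $b_R$ does not yield $\|\textnormal{Op}(b_R)\|_{\mathscr{B}(L^2(\R^n))}\to 0$ as written. The step is repairable: since $d=0$ means $\tilde a\to 0$ at phase-space infinity while \emph{all} derivatives of $\tilde a$ are uniformly bounded, Landau--Kolmogorov/interpolation inequalities on unit balls (if $\abs{f}\leqslant\varepsilon$ and $\abs{\nabla^2 f}\leqslant M$ on $B(z,1)$, then $\abs{\nabla f(z)}\lesssim \varepsilon+\sqrt{\varepsilon M}$) show inductively that every derivative $\partial^\gamma \tilde a$ also vanishes at phase-space infinity; with that supplement, all seminorms of $b_R$ do tend to zero and your norm-limit-of-Hilbert--Schmidt-operators argument closes. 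Alternatively, you could do what the paper does and quote Molahajloo's theorem for this implication. As submitted, though, this one step is incorrect.
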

\begin{proof}
That $A\in \Psi^{0}_{1,0}(G\times \R^n),$ can be easily proved. Indeed, it is consequence of the inequality
\begin{equation}
  \abs{\pDiffGrpToOrd{x}{\beta} \pDiffToOrd{\xi}{\alpha} \sigma(x,\xi)}
     \leq    C_{\alpha,\beta} (1+\abs{\xi})^{-\abs{\alpha}}(1+|\ln(1-x^{2})|)^{-|\beta|}\lesssim (1+\abs{\xi})^{-\abs{\alpha}}.
\end{equation} Let $\tilde{A} : \SchwartzSpaceOn{\R^n} \to \SchwartzSpaceOn{\R^n}$ be the continuous linear operator such that the following diagram
\begin{eqnarray}
    \begin{tikzpicture}[every node/.style={midway}]
  \matrix[column sep={10em,between origins}, row sep={4em}] at (0,0) {
    \node(R) {$\SchwartzSpaceOn{\R^n}$}  ; & \node(S) {$\SchwartzSpaceOn{\R^n}$}; \\
    \node(R/I) {$\SchwartzSpaceOn{\group}$}; & \node (T) {$\SchwartzSpaceOn{\group}$};\\
  };
  \draw[<-] (R/I) -- (R) node[anchor=east]  {$\pull{t}$};
  \draw[->] (R) -- (S) node[anchor=south] {$\tilde{A}$};
  \draw[->] (S) -- (T) node[anchor=west] {$\pull{t}$};
  \draw[->] (R/I) -- (T) node[anchor=north] {$A$};
\end{tikzpicture}
\end{eqnarray}
commutes. Denote by $\tilde{a}(t,\xi)$ the symbol of $\tilde{A}.$ Then, in terms of $\tilde{a},$ \eqref{ineq:symbIneqGrp::22} is equivalent to the following symbol estimates
\begin{equation}\label{SG:zero}
  \abs{ \partial_{t}^\beta \pDiffToOrd{\xi}{\alpha} \tilde{\sigma}(t,\xi)}
     \leq    C_{\alpha,\beta} (1+\abs{\xi})^{-\abs{\alpha}}(1+|t|)^{-|\beta|} .
\end{equation}
Then, in view of the Gohberg lemma proved by Grushin \cite{Grushin}, for any compact operator {$\tilde{K}\in\mathscr{C}(L^2(\R^n))$},
\begin{equation}
    \Vert \tilde{A}-\tilde{K}  \Vert_{\mathscr{B}(L^2(\R^n))}\geq d:=\limsup_{|\xi|\rightarrow\infty,\,|t|\rightarrow \infty }|\tilde{\sigma}(t,\xi)|.
\end{equation}
Now, any compact operator {$K\in\mathscr{C}(L^2(G))$} is the pull-back by {$t_*:L^2(\R^n)\rightarrow L^2(G)$ }of compact operator {$\tilde{K}\in\mathscr{C}(L^2(\R^n))$}, that is $K=t_{*}(\tilde{K}),$ {and} then 
\begin{equation}
\Vert A-K  \Vert_{\mathscr{B}(L^2(G))}=    \Vert \tilde{A}-\tilde{K}  \Vert_{\mathscr{B}(L^2(\R^n))}\geq d:=\limsup_{|\xi|\rightarrow\infty,\,|t|\rightarrow \infty }|\tilde{\sigma}(t,\xi)|=\limsup_{|\xi|\rightarrow\infty,\,|x|\rightarrow 1 }|{\sigma}(x,t)|.
\end{equation}
The previous inequality proves the first part of Theorem \ref{Gohberg:Lemma} {by taking the infimum over all compact operators $K\in\mathscr{C}(L^2(G))$}. On the other hand, it was proved by Molahajloo in Theorem 1.4 of \cite{Molahajloo}, that $\tilde{A}$ is compact on $L^2(G),$ if and only if $d:=\limsup_{|\xi|\rightarrow\infty,\,|t|\rightarrow \infty }|\tilde{\sigma}(t,\xi)|=0.$ Since $A$ is compact on $L^2(G)$ if and only if $\tilde{A}$ is compact on $L^2(\R^n),$ then we have that $A$ is compact on $L^2(G)$ if and only if $d=0.$ The proof of Theorem \ref{Gohberg:Lemma} is complete.
\end{proof}

\subsection{The Atiyah-Singer-Fedosov index theorem on $G$ } In this section we compute the index  for the elliptic pseudo-differential operators with symbols in the sub-class
\begin{equation}\label{order:zero:shubin:class}
 \Sigma^{0}_{1,0}(G\times \R^n;\mathbb{C}^{\nu\times \nu})
\end{equation}
 of $S^{0}_{1,0}(G\times \R^n;\mathbb{C}^{\nu\times \nu})$ defined as follows.

For $\nu=1,$ $a^{G}\in \Sigma^{0}_{1,0}(G\times \R^n)= \Sigma^{0}_{1,0}(G\times \R^n;\mathbb{C}),$ if it  satisfies for every \(\alpha, \beta \in \N_{0}^{n}\) and every \((x,\xi) \in \group \times \R^n\) the symbolic estimates
\begin{equation}\label{ineq:symbIneqGrp:2}
    \abs{\pDiffGrpToOrd{x}{\beta} \pDiffToOrd{\xi}{\alpha} \symbToGrp{a}(x,\xi)}
    = \abs{(\pDiffToOrd{t}{\beta} \pDiffToOrd{\xi}{\alpha}a)(t(x),\xi)}
    \leqslant C_{\alpha,\beta} (1+|\ln(1-x^{2})|+\abs{\xi})^{-\rho\abs{\alpha}+\delta\abs{\beta}}.
\end{equation}
Conversely, if a smooth function \(a \in \CinftyOn{\group \times \R^{n}}\) satisfies the symbolic estimates \eqref{ineq:symbIneqGrp:2}, then we obtain for \(\symbToRn{a}(t,\xi) := a(x(t),\xi) \in \CinftyOn{\R^{n} \times \R^{n}}\) the inequalities
\begin{equation}\label{ineq:symbIneqGrp:2:2}
\abs{\pDiffToOrd{t}{\beta} \pDiffToOrd{\xi}{\alpha} \symbToRn{a}(t,\xi)} = \abs{(\pDiffGrpToOrd{x}{\beta} \pDiffToOrd{\xi}{\alpha} a)(x(t),\xi)} \leq C_{\alpha,\beta} (1+|t|+\abs{\xi})^{-\rho\abs{\alpha}+\delta\abs{\beta}}.
\end{equation}
So the class $\Sigma^{0}_{1,0}(\R^n\times \R^n)=\{a^{\R^n} \textnormal{ satisfying }\eqref{ineq:symbIneqGrp:2:2}\}$ is the Shubin class of order zero (see e.g. \cite[Page 459]{FischerRuzhanskyBook}). So, by following this nomenclature, we call to \eqref{order:zero:shubin:class} the Shubin class of order zero on $G.$

For $\nu \in \mathbb{N}$ with $\nu\geq 2,$ the class of matrix-symbols $S^{0}_{1,0}(G\times \R^n;\mathbb{C}^{\nu\times \nu})$ is the class of functions $a\in C^{\infty}(G\times \R^n,\mathbb{C}^{\nu\times \nu} ) $ whose entries are symbols in the class $S^{0}_{1,0}(G\times \R^n).$ So, to a matrix-symbol $a=(a_{ij})_{i,j=1}^{\nu}\in S^{0}_{1,0}(G\times \R^n;\mathbb{C}^{\nu\times \nu}) $ we associate the matrix-valued operator
$$ A\equiv \textnormal{Op}(a):=(\textnormal{Op}(a_{ij}))_{i,j=1}^{\nu}.  $$

We denote by 
\begin{equation}
    \PsiDOHor{0}{\rho}{\delta}{\group \times \R^n;\mathbb{C}^{\nu\times \nu}} = \{\sigma(x,\mathfrak{D}) : \sigma \in \symbClassOn{m}{\rho}{\delta}{\group \times \R^n;\mathbb{C}^{\nu\times \nu}}\}
\end{equation} the family of (global) matrix pseudo-differential operators on $\group$ with order $m$ and of $(\rho,\delta)$-type. It is clear that if
 $$ \Sigma^{0}_{1,0}(G\times \R^n;\mathbb{C}^{\nu\times \nu})=\{a=(a_{ij})_{ij=1}^{\nu}:a_{ij}\in \Sigma^{0}_{1,0}(G\times \R^n) \},$$
 then
\begin{equation}
    \Xi^{0}_{1,0}(G\times \R^n;\mathbb{C}^{\nu\times \nu})=\{\textnormal{Op}(a) : a=(a_{ij})_{ij=1}^{\nu}\in \Sigma^{0}_{1,0}(G\times \R^n;\mathbb{C}^{\nu\times\nu}) \}
\end{equation} is a subset of the operator class $\Psi^{0}_{1,0}(G\times \R^n;\mathbb{C}^{\nu\times \nu}).$

In general the class of operators $\Xi^{0}_{1,0}(G\times \R^n;\mathbb{C}^{\nu\times \nu})$ is a good sub-class of $\Psi^{0}_{1,0}(G\times \R^n;\mathbb{C}^{\nu\times \nu})$ if one is thinking in generalising several of the results that are available for pseudo-differential operators on compact manifolds. In particular, if as in this section one wants to compute an  Atiyah-Singer-Fedosov type formula. These formulas give topological data in terms of the symbols of the operators. Indeed, in 1974 B. Fedosov suggested in \cite{Fe74} the analytic index formula for an elliptic pseudo-differential operator, representing the index by the ``winding'' number of the symbol.  The following result is of Atiyah-Singer-Fedosov type. We prove, in particular, that the analytical index of an elliptic operator of the class $\Psi^0_{\rho,\delta}(G\times \R^n;\mathbb{C}^{\nu\times \nu})$ agrees with the ``winding'' number of its global symbol.
\begin{theorem}\label{S:Garding:Inequality}
 Let $A : \SchwartzSpaceOn{\group,\mathbb{C}^\nu} \to \SchwartzSpaceOn{\group,\mathbb{C}^\nu}$ be a pseudo-differential operator with symbol $a \in \Sigma^{0}_{1,0}(G\times \R^n;\mathbb{C}^{\nu\times\nu})$ such that for any $(x,\xi)$ outside of a ball $\B$ in $ G\times\R^n,$ $a(x,\xi)\in \textnormal{End}[\mathbb{C}^\nu]$ is invertible. Then $A$ is a Fredholm operator on $L^2(G,\mathbb{C}^{\nu})$
with index
 \begin{equation}\label{Index:A}
     \textnormal{ind}[A]=-\frac{(n-1)!}{(-2\pi i)^{n}(2n-1)!}\int\limits_{\partial{B}}\textnormal{Tr}[a^{-1}(x,\xi)da(x,\xi)]^{2n-1}
 \end{equation}where $G\times \R^n$ is positively oriented by the volume form $dx_1\wedge d\xi_1\wedge \cdots \wedge dx_n\wedge d\xi_n.$
\end{theorem}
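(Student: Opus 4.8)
The plan is to transport the problem to $\R^n$ through the isomorphism $\pull{t}$ and then invoke the Fedosov index formula already available for the zero-order Shubin calculus on $\R^n$. Introduce the diffeomorphism $\Phi : \group \times \R^n \to \R^n \times \R^n$, $\Phi(x,\xi) := (t(x),\xi)$, and set $\tilde{A} := \pull{x} A \pull{t}$. By the correspondence \eqref{eq:PsiDOCorrespondence}, $\tilde{A} = \opRnOf{\symbToRn{a}}$ is the pseudo-differential operator on $\R^n$ with matrix symbol $\symbToRn{a}(t,\xi) = a(x(t),\xi)$, and by \eqref{ineq:symbIneqGrp:2}--\eqref{ineq:symbIneqGrp:2:2} this symbol lies in the Shubin class $\Sigma^{0}_{1,0}(\R^n\times\R^n;\C^{\nu\times\nu})$. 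Since $\Phi$ is a bijection carrying the ball $\B$ onto $\tilde{\B} := \Phi(\B)$, the invertibility of $a(x,\xi)$ off $\B$ is equivalent to the invertibility of $\symbToRn{a}(t,\xi)$ off $\tilde{\B}$; thus $\tilde{A}$ is elliptic in the zero-order Shubin class on $\R^n$.

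First I would settle the Fredholm property. An elliptic symbol in $\Sigma^{0}_{1,0}(\R^n\times\R^n;\C^{\nu\times\nu})$ admits a parametrix modulo $\Psi^{-\infty}$ (this is classical Shubin theory; the construction runs parallel to \cref{IesTParametrix}), so $\tilde{A}$ is a Fredholm operator on $L^2(\R^n,\C^\nu)$. By \eqref{eq:innerProducts} the pull-back $\pull{t} : L^2(\R^n) \to L^2(\group)$ is a unitary isomorphism with inverse $\pull{x}$, and $A = \pull{t}\,\tilde{A}\,\pull{x}$. Conjugation by a unitary preserves the Fredholm property and the index, so $A$ is Fredholm on $L^2(\group,\C^\nu)$ with $\textnormal{ind}[A] = \textnormal{ind}[\tilde{A}]$.

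Next I would apply the Atiyah-Singer-Fedosov formula for elliptic zero-order Shubin operators on $\R^n$ (see \cite{Fe74}), giving
\[
\textnormal{ind}[\tilde{A}] = -\frac{(n-1)!}{(-2\pi i)^{n}(2n-1)!}\int\limits_{\partial\tilde{\B}} \textnormal{Tr}\big[(\symbToRn{a})^{-1}\,d\,\symbToRn{a}\big]^{2n-1}.
\]
Because $a = \Phi^{\ast}\symbToRn{a}$ and pullback commutes with the exterior derivative, pointwise matrix inversion, the trace and the wedge product, the closed $(2n-1)$-form transforms as $\Phi^{\ast}\,\textnormal{Tr}[(\symbToRn{a})^{-1}d\,\symbToRn{a}]^{2n-1} = \textnormal{Tr}[a^{-1}da]^{2n-1}$. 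Applying the change-of-variables formula over the hypersurface $\partial\B = \Phi^{-1}(\partial\tilde{\B})$ converts the integral over $\partial\tilde{\B}$ into the integral over $\partial\B$ appearing in the statement.

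The one genuinely delicate point, and the main obstacle, is the orientation bookkeeping. Since each $t_{j}(x) = \tfrac{1}{2}\ln\tfrac{1-x_{j}}{1+x_{j}}$ is strictly decreasing, a direct computation gives
\[
\Phi^{\ast}\big(dt_{1}\wedge d\xi_{1}\wedge\cdots\wedge dt_{n}\wedge d\xi_{n}\big) = (-1)^{n}\Big(\prod_{j=1}^{n}\frac{1}{1-x_{j}^{2}}\Big)\,dx_{1}\wedge d\xi_{1}\wedge\cdots\wedge dx_{n}\wedge d\xi_{n},
\]
so $\Phi$ preserves or reverses the ambient orientation according to the parity of $n$. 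Prescribing $\group\times\R^n$ to be positively oriented by $dx_{1}\wedge d\xi_{1}\wedge\cdots\wedge dx_{n}\wedge d\xi_{n}$ serves precisely to fix the orientation of $\partial\B$, and hence the sign of the winding-number integral, so that the transported formula reproduces the displayed right-hand side; I would track this Jacobian sign against the orientation convention underlying \cite{Fe74} to confirm that the two are compatible. Finally, since the integrand is a closed form and the integral computes the winding number of the symbol on the sphere at infinity, it is independent of the radius of $\B$ and of the particular elliptic representative, which closes the argument.
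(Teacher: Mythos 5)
Your proposal follows essentially the same route as the paper: transport $A$ to $\tilde{A} = \pull{x}A\pull{t}$ on $\R^n$, invoke the Euclidean Fedosov--H\"ormander index theorem for the elliptic zero-order Shubin class, use unitary invariance of the Fredholm property and the index under $\pull{t}$, and transfer the winding-number integrand back --- your pullback-naturality argument for $\textnormal{Tr}[a^{-1}da]^{2n-1}$ is exactly the paper's explicit chain-rule computation showing $\tilde{a}^{-1}d\tilde{a} = a^{-1}da$. If anything, you are more careful than the paper about the orientation bookkeeping: the Jacobian sign $(-1)^n$ of $(x,\xi)\mapsto(t(x),\xi)$, which you flag as the delicate point, is passed over in silence in the paper's proof when it equates the integrals over $\partial\tilde{B}$ and $\partial B$.
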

\begin{proof} We start by observing that $A$ is bounded on $L^2(G)$ in view of the Calder\'on-Vaillancourt theorem in the form of Corollary \ref{Calderon-Vaillancourth}. Now, let us prove that $A$ is Fredholm on $L^2(G)$ and let us compute its index. Let $\tilde{A} : \SchwartzSpaceOn{\R^n} \to \SchwartzSpaceOn{\R^n}$ be the continuous linear operator such that the following diagram
\begin{eqnarray}
    \begin{tikzpicture}[every node/.style={midway}]
  \matrix[column sep={10em,between origins}, row sep={4em}] at (0,0) {
    \node(R) {$\SchwartzSpaceOn{\R^n}$}  ; & \node(S) {$\SchwartzSpaceOn{\R^n}$}; \\
    \node(R/I) {$\SchwartzSpaceOn{\group}$}; & \node (T) {$\SchwartzSpaceOn{\group}$};\\
  };
  \draw[<-] (R/I) -- (R) node[anchor=east]  {$\pull{t}$};
  \draw[->] (R) -- (S) node[anchor=south] {$\tilde{A}$};
  \draw[->] (S) -- (T) node[anchor=west] {$\pull{t}$};
  \draw[->] (R/I) -- (T) node[anchor=north] {$A$};
\end{tikzpicture}
\end{eqnarray}commutes. Denote by $\tilde{a}(t,\xi)$ the symbol of $\tilde{A}.$ In view of the Index theorem in $\R^n$ by H\"ormander (see \cite[Page 215]{Hormander1985III}), since $A : \SchwartzSpaceOn{\R^n,\mathbb{C}^\nu} \to \SchwartzSpaceOn{\R^n,\mathbb{C}^\nu}$ is a pseudo-differential operator  such that for any $(t,\xi)$ outside of a ball  $\tilde{B}$ of $ \R^n\times\R^n,$ $\tilde{a}(x,\xi)\in \textnormal{End}[\mathbb{C}^\nu]$ is invertible, then $\tilde{A}$ is a Fredholm operator on $L^2(\R^n,\mathbb{C}^{\nu})$
with {the} index
 \begin{equation}
     \textnormal{ind}[\tilde{A}]=-\frac{(n-1)!}{(-2\pi i)^{n}(2n-1)!}\int\limits_{\partial{\tilde{B}}}\textnormal{Tr}[\tilde{a}^{-1}(t,\xi)d\tilde{a}(t,\xi)]^{2n-1},
 \end{equation} 
 where $\R^n\times \R^n$ is oriented by the volume form $dt_1\wedge d\xi_1\wedge \cdots \wedge dt_n\wedge d\xi_n.$ The topology in $G$ induced by the mapping $t:G\rightarrow \R^n,$ implies that
 $$ B=t^{-1}[\tilde{B}],   $$ is a ball with respect to the distance $d_G(x,y)=|t(x)-t(y)|,$ and then
 $$ \partial B=t^{-1}[\partial\tilde{B}] . $$
 Because the index is a spectral invariant  under unitary transformations, we have that
 $$ \textnormal{ind}[\tilde{A}]=\textnormal{ind}[A].  $$
 Now, let us compute $ \textnormal{ind}[{A}]$ in terms of $a$. For this, note that
 \begin{align*}
     \tilde{a}^{-1}(t,\xi)d\tilde{a}(t,\xi) &=\tilde{a}^{-1}(t,\xi)\sum_{j=1}^n\frac{\partial \tilde{a}(t,\xi)}{\partial t_j}dt_j+\tilde{a}^{-1}(t,\xi)\sum_{j=1}^{n}\frac{\partial \tilde{a}(t,\xi)}{\partial \xi_j}d\xi_j\\
     &=a(x,\xi)^{-1}\sum_{j=1}^n(-(1-x_j^2))\frac{\partial a(x,\xi)}{\partial x_j}dt_j+{a}^{-1}(x,\xi)\sum_{j=1}^{n}\frac{\partial a(x,\xi)}{\partial \xi_j}d\xi_j\\
      &=a(x,\xi)^{-1}\sum_{j=1}^n(-(1-x_j^2))\frac{\partial a(x,\xi)}{\partial x_j}\times \frac{dx_j}{(-(1-x_j^2))}\\
      &\hspace{2cm}+{a}^{-1}(x,\xi)\sum_{j=1}^{n}\frac{\partial a(x,\xi)}{\partial \xi_j}d\xi_j\\
      &=a(x,\xi)^{-1}\sum_{j=1}^n\frac{\partial a(x,\xi)}{\partial x_j} {dx_j}+{a}^{-1}(x,\xi)\sum_{j=1}^{n}\frac{\partial a(x,\xi)}{\partial \xi_j}d\xi_j\\
      &=a(x,\xi)^{-1}da(x,\xi).
 \end{align*}In consequence, with $\tilde{C}=\partial \tilde{B}$ and $C=\partial B$ we have,
 \begin{equation*}
    \int\limits_{\tilde{C}}\textnormal{Tr}[\tilde{a}^{-1}(t,\xi)da(t,\xi)]^{2n-1}=\int\limits_{C}\textnormal{Tr}[a^{-1}(x,\xi)da(x,\xi)]^{2n-1}.
 \end{equation*} The proof is complete.
\end{proof}
\begin{remark}Note that in \eqref{Index:A}, $\omega=a^{-1}da$ is a  one form on $G\times \R^n$ with coefficients taking values in $\nu\times \nu$ matrices. If $n=\nu=1,$ the right-hand side of the equality \eqref{Index:A} is the winding number of $a$ considered as a mapping from $\partial B$ into $\mathbb{C}\setminus \{0\}.$ Then, in this case the index formula is
\begin{equation*}
    \textnormal{ind}(A)={-\frac{1}{2\pi i}\int\limits_{\partial B}\frac{da}a,}
\end{equation*}and we {re-obtain the index} formula by F. Noether \cite{NF}.
\end{remark}

\section{\bf Fredholm properties and $L^p$-boundedness  of $\Psi$DOs with non-classical symbols on \texorpdfstring{$\group = (-1,1)^n$}{group}}\label{Spectral:2}
In this section we analyse the Fredholmness and the $L^p$-boundeness of pseudo-differential operators by using result of the same type for Fourier multipliers on the group.
\subsection{Boundedness of pseudo-differential operators in the Bessel potential spaces}

In the present subsection we will expose conditions on a symbol $a(x,\xi)$ which ensure the boundedness of the corresponding pseudo-differential operator $a(x,\fD)$ in \eqref{e2.3} in the Bessel potential spaces
 \begin{eqnarray}\label{e6.8}
a(x,\fD)\;:\;\bH_p^s(G)\rightarrow\bH_p^{s-r}
     (G)
 \end{eqnarray}

We start with convolution operators $a_{\R^n}(D)$ and $a_\group(\fD)$, which are pseudo-differential, but independent of the variable $x$.

\begin{definition}[Polytopes]
Consider polytope domain $\Omega$ in $\R^n$ which represents an open domain, intersection of finite number of affine transformed half spaces
 \[
\bigcap_{j=1}^N\left\{\xi\in\R^n\;:\;\langle{\bf V}^j, \xi-\eta^j\rangle>0,\;\; N\geqslant n+1\right\}\not=\emptyset,\quad {\bf V}^1,\ldots,{\bf V}^N,\eta^1,\ldots\eta^n\in\R^n.
 \]
Polytope domains include cones open to infinity and having empty intersection with a sufficiently large polytope domain, containing 0. Examples of polytopes in $\mathbb{R}^3$ are given in Figure \ref{Pol}.
\end{definition}
\begin{definition}[Piecewise constant functions]
We call a function $a(\xi)$, $\xi\in\R^n$, piecewise-constant if $\R^n$ is divided into a finite number of polytops and $a(\xi)$ is constant on each of these polytops. ${\bf P}{\bf C}(\R^n)$ denotes the algebra of piecewise-constant functions.
\end{definition}
\begin{figure}[h]
\includegraphics[width=12cm]{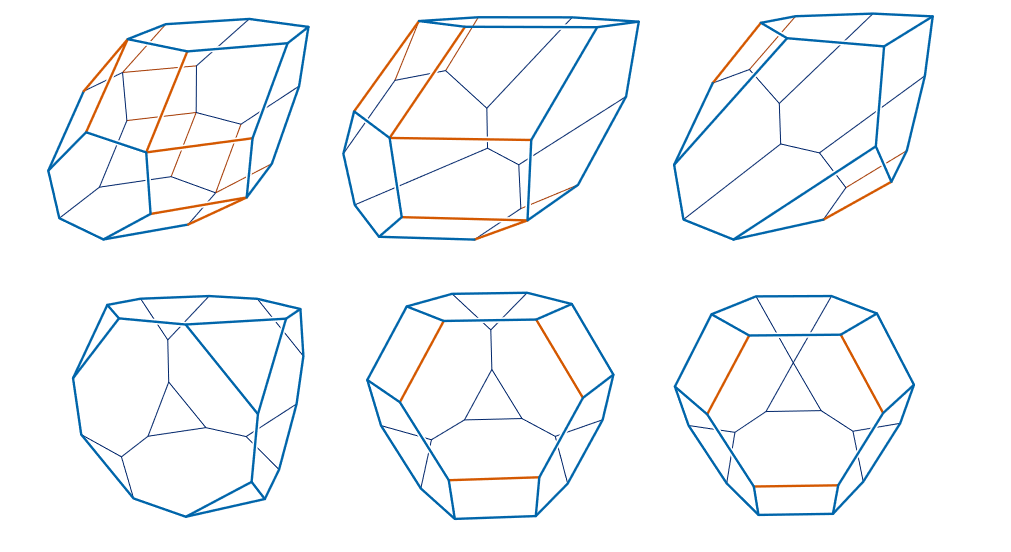}\\
\caption{Examples of polytopes in $\mathbb{R}^3,$ see \cite{Pol}.}
 \label{Pol}
\centering
\end{figure} 

 %
 \begin{lemma}\label{l6.5} Any piecewise constant function  belongs to the multiplier class\linebreak ${\bf P}{\bf C}(\R^n)\subset\widetilde{\fM}_p(\R^n) \subset\fM_p(\R^n)$ for all $1<p<\infty$.
 \end{lemma}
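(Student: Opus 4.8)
Here is how I would approach the proof of \cref{l6.5}.

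The plan is to reduce everything to a single model multiplier, the characteristic function of a half-space, and then to exploit that $\fM_p(\R^n)$ is a Banach algebra together with the fact that the defining polytopes are \emph{finite} intersections of half-spaces while a piecewise-constant function takes only \emph{finitely} many values. Thus it suffices to show that for every ${\bf V}\in\R^n\setminus\{0\}$ and every $\eta\in\R^n$ the characteristic function
\[
  h_{{\bf V},\eta}(\xi):=\chi_{\{\xi\in\R^n\,:\,\langle{\bf V},\xi-\eta\rangle>0\}}(\xi)
\]
belongs to $\fM_p(\R^n)$ (and in fact to the smaller class $\widetilde{\fM}_p(\R^n)$) for all $1<p<\infty$, with a multiplier norm uniform in the parameters.

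First I would treat the homogeneous half-space $\{\langle{\bf V},\xi\rangle>0\}$. Writing
\[
  \chi_{\{\langle{\bf V},\xi\rangle>0\}}(\xi)=\tfrac12\bigl(1+\operatorname{sgn}\langle{\bf V},\xi\rangle\bigr),
\]
one sees that the only nontrivial ingredient is the symbol $\operatorname{sgn}\langle{\bf V},\xi\rangle$, which depends on $\xi$ only through the single linear functional $\langle{\bf V},\xi\rangle$. For a symbol of this special ``one-dimensional'' form the $n$-dimensional multiplier norm equals the one-dimensional multiplier norm of $\operatorname{sgn}$: an orthogonal change of variables sends ${\bf V}/|{\bf V}|$ to $e_1$ and a dilation absorbs $|{\bf V}|$ (both operations preserving multiplier norms), after which the symbol depends only on $\xi_1$ and Fubini in the remaining variables yields equality of the norms. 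Since $\operatorname{sgn}$ is, up to a constant, precisely the symbol of the classical Hilbert transform, which is bounded on $L^p(\R)$ for every $1<p<\infty$, it follows that $\operatorname{sgn}\langle{\bf V},\xi\rangle\in\fM_p(\R^n)$, and hence so does $\chi_{\{\langle{\bf V},\xi\rangle>0\}}$.

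The affine shift is then handled by modulation: the offset $\eta$ merely replaces the threshold by the constant $c:=\langle{\bf V},\eta\rangle$, and translating the symbol in the frequency variable corresponds to multiplying by a unimodular exponential $e^{2\pi i\,x\cdot\zeta}$ in the physical variable (for any $\zeta$ with $\langle{\bf V},\zeta\rangle=c$). Such a modulation is an isometry of $L^p(\R^n)$ and leaves the multiplier norm unchanged, so $h_{{\bf V},\eta}\in\fM_p(\R^n)$ with the same norm as the homogeneous case. Now, if $\Omega=\bigcap_{j=1}^N\{\langle{\bf V}^j,\xi-\eta^j\rangle>0\}$ is a polytope, then $\chi_\Omega=\prod_{j=1}^N h_{{\bf V}^j,\eta^j}$, and the Banach-algebra property $a_{\R^n}(D)\,b_{\R^n}(D)=(ab)_{\R^n}(D)$ recorded above gives $\chi_\Omega\in\fM_p(\R^n)$. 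Finally, a piecewise-constant $a=\sum_k c_k\chi_{\Omega_k}$ is a finite linear combination of such characteristic functions, hence lies in $\fM_p(\R^n)$; keeping track of the uniform bounds at each stage places it in the intermediate class $\widetilde{\fM}_p(\R^n)$. The main obstacle is the half-space step: it is exactly the flatness of the bounding hyperplane that lets one reduce to the one-dimensional Hilbert transform and thereby avoid the failure of the curved (ball) multiplier; all remaining steps are soft consequences of the algebra structure and modulation invariance.
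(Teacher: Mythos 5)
Your proposal is correct and takes essentially the same route as the paper's proof: both reduce the characteristic function of a half-space to the $\mathrm{sign}$ symbol of the Hilbert transform (the paper via the partial Hilbert transforms with symbols $-\mathrm{sign}\,\xi_k$, you via rotation, dilation and tensorization of the one-dimensional Hilbert transform), both handle general half-spaces by orthogonal changes of variables and modulations that preserve multiplier norms, and both conclude via the Banach-algebra property of $\fM_p(\R^n)$ applied to finite products (polytopes) and finite linear combinations (piecewise-constant functions). The only difference is cosmetic: the paper additionally records the group-$G$ counterparts $\mathbf{S}_{G,k}$ of the singular integral operators, which play no role in the $\R^n$-multiplier assertion itself.
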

 \noindent
\begin{proof}
For the proof we consider the following steps:
\begin{itemize}
    \item Step 1: We define a polysingular integral operator ${\bf S}_{G,k}$. We recall some identities between the operator ${\bf S}_{G,k} $ and the partial Hilbert transforms.
    \item Step 2. We define the class of affine transformations on $\mathbb{R}^n.$ These transformations are applied to the operator and an identity involving the symbol $a(\xi)$ with the symbol of partial Hilbert transforms allow us to conclude the proof. 
\end{itemize}
So we continue the proof under this strategy. 
\begin{itemize}
    \item Step 1:  The Cauchy polysingular integral operators
 \begin{eqnarray}\label{e6.9}
{\bf S}_{G,k}\varphi(x):=\frac1{\pi}\int\limits_{-1}^1
     \frac{\varphi(x_1,\ldots,x_{k-1},y_k,x_{k+1},\ldots,x_n)}{
     \dst\frac12\ln\dst\frac{1+x_k\circ(-y_k)}{1-x_k\circ (-y_k)}}\frac{dy_k}{1 -y^2_k},\\ \quad x=(x_1,\ldots,,x_n)^\top\in G,\nonumber
 \end{eqnarray}
are bounded in $L^p(G)$ for all $1<p<\infty$ and all $k\in \mathbb{N}$ because their counterpart on the Euclidean space $\R^n$
 \begin{eqnarray*}
{\bf S}_{\R,k}\psi(t):=x_*{\bf S}_{G,k}t_*\psi(t)
     =\frac1{\pi}\int\limits_{-\infty}^\infty\frac{\psi(t_1,\ldots,t_{k-1},\tau_k,t_{k+1},
     \ldots,t_n)d\tau_k}{\tau_k-t_k},\\ t=(t_1,\ldots,,t_n)^\top\in\R,
 \end{eqnarray*}
are partial Hilbert transforms (which are polysingular operators) and are bounded in $L^p(\R^n)$ (see, e.g. \cite{Du76}). These operators are, obviously, of  convolution type 
 \[
{\bf S}_{\R,k}=W^0_{\R,\cS_k},\qquad {\bf S}_{G,k}
     =W^0_{G,\cS_k},\qquad k=1,\ldots,n,
 \]
and the symbol of both of them is (cf. \cite[Lemma 1.35, p.23]{Du79} $\cS_k(\xi)=-{\rm sign}\,\xi_k$, $\xi=(\xi_1,\ldots,\xi_n)^\top\in\R^n$.

Therefore, the operators
 \[
\frac12\left[I\mp{\bf S}_{{G,k}}\right]
     =W^0_{G,\cH^k_\pm},\qquad \cH^k_\pm(\xi):=\frac12(1\pm{\rm sign}\,\xi_k)\quad k=1,\ldots,n
 \]
are bounded in the space $L^p(G)$  and $\cH^k_\pm\in\fM_p(\R^n)$ are multipliers for all $1<p<\infty$ and all $k=1,\ldots,n$. Note, that,  $\cH_k(\xi)$ are the characteristic functions of the coordinate half spaces $\Pi_\pm^k:=\{\xi\in\R^n\;:\;\pm\xi_k>0\}$, $k=1,\ldots,n$.

\item Step 2. 
On the other hand, for any vector $\eta\in\R^n$, any affine transformation
 \[
\sigma\in{\bf A}{\bf T}(\R^n):=\{\sigma\;:\;\R^n\to\R^n\;:\; \sigma^\top
     =\sigma^{-1}\}
 \]
and any multiplier $a\in\fM_p(\R^n)$  we have the inclusion $\sigma_*a, V_\eta a\in\fM_p(\R^n)$, where $\sigma_*u(\xi):=u(\sigma\xi)$ and $V_\eta a(\xi)=a(\xi-\eta)$.

Indeed, note first that
 \[
\sigma_\ast\;:\;{H}_p^s(\R^n)\to{H}_p^s(\R^n), \qquad
     e^{\pm ict}I\;:\;{H}_p^s(\R^n)\to{H}_p^s(\R^n)
 \]
are isometric isomorphisms of the spaces. Then the claimed inclusions $\sigma_*a, V_\eta a\in\fM_p(\R^n)$ follow from the equalities:
 \begin{eqnarray*}
\sigma_\ast a(D)\sigma^\top_\ast\varphi(t)&\hskip-3mm=&\hskip-3mm
     \frac1{(2\pi)^n}\int\limits_{\R^n}e^{-i\sigma t\cdot\xi}a(\xi)d\xi\int\limits_{\R^n}
     e^{i\xi\cdot y}\sigma_\ast^\top\varphi(y)dy=(\sigma_\ast a)(D)\varphi(t),\\
e^{-i\eta\cdot\xi}a_{\R^n}(D)e^{i\eta\cdot y}\varphi(x)&\hskip-3mm=&\hskip-3mm
     \int\limits_{\R^n}e^{-it\cdot(\xi+\eta)}a(\xi)d\xi\int\limits_{\R^n}e^{i(\xi-\eta)\cdot y}
     \varphi(y)dy=a_{\R^n}(D-\eta I).
 \end{eqnarray*}

Therefore, the characteristic function $\cH_{\Pi(\sigma,\eta)}$ of any half space $\Pi(\sigma,\eta)$, situated from one side of any hyperplane $\sigma\Pi_+^1-\eta$, $\sigma\in {\bf A}{\bf T}(\R^n)$, $\eta\in\R^n$, is a multiplier $\cH_{\Pi(\sigma,\eta)}\in\fM_p(\R^n)$.

Any piecewise constant function $a\in {\bf P}{\bf C}(\R^n)$ is represented in the form
 \[
a(\xi)=\sum_{k=1}^Nd_k\prod_{j=1}^M \cH^1_+(\sigma_{jk}\xi-\eta_{jk}),\qquad d_k\in\mathbb{C}, \quad \eta_{jk}\in\R^n,\quad j,k=1,\ldots,n,
 \]
where $\sigma_{jk}\in{\bf A}{\bf T}(\R^n)$,  $j=1,\ldots,M$, $k=1,\ldots,N$. Then $a\in\fM_p(\R^n)$, because  the multiplier class $\fM_p(\R^n)$ is an algebra $W^0_{G,g}W^0_{G,h} =W^0_{G,gh}$.
\end{itemize} The proof of Lemma \eqref{l6.5} is complete.
\end{proof}
 %
 
Now we analyse the boundedness of pseudo-differential operators using the norm of $L^p$-Fourier multipliers. 
 \begin{theorem}\label{t6.6}
 Let $1<p<\infty$ and let $m\in \mathbb{N},$ and  assume that a symbol $a(x,\xi)$ satisfies the following
 hypothesis:
 \[
 {\bf A.}\hskip30mm a_{(\beta+\gamma)}\in C(\R^n,\fM_p(\R^n)), \quad
      \beta,\gamma\in\N_0^n,\quad |\beta|\leqslant1,\quad|\gamma|\leq m;
 \]
for any permutation $\kappa$ of the variables $y_1,\ldots, y_n$ and for any $k$, $0<k\leqslant n$;
\begin{equation}\label{e6.11}
{\bf B.}\quad M_{\kappa,\gamma}:=\int\limits_{\R^k}\left\|\frac{
    \partial^k a^\kappa_{(\gamma)}(y_1,\ldots,y_k,0\ldots,0,\cdot)}{\partial
    y_1\ldots\partial y_k}\right\|_{ \fM_p(\R^n) }dy_1 \ldots dy_k<\infty,
\end{equation}
where $$ a^\kappa_{(\gamma)}(y,\xi):=a_{(\gamma)}(\kappa(y),\xi)$$ and $|\gamma|\leq m$.  Then the pseudo-differential operators
 \begin{eqnarray}\label{e6.12}
a_{\R^n}(x,D)\;:\;{H}_p^s(\R^n)\to {H}_p^s(\R^n),\\
\label{e6.13}
a_\group(x,\fD)\;:\;{H}_p^s(G)\to{H}_p^s(G)
 \end{eqnarray}
are bounded provided that
$$ -m-1+\frac{1}{p}<s<m+\frac{1}{p},$$ and the corresponding operators norms satisfy the estimate
 \begin{eqnarray}\label{e6.14}
\|a_{\R^n}(x,D)\|_{ \mathscr{B}({H}_p^s(\R^n)) }=\|a_\group(x,D)
    \|_{ \mathscr{B}({H}_p^s(G)) }\leqslant C_{s,p} \sup_\kappa\sum\limits_{\gamma|\leqslant m}M_{\kappa,\gamma},
 \end{eqnarray}
where the constant $C_{s,p}<\infty$ are depending  only on the parameters $s$, $p$ and $\mathscr{B}(\fB)$ denotes the algebra of bounded operators in the Banach space $\fB$.
\end{theorem}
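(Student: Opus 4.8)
The plan is to reduce the whole statement to its Euclidean half \eqref{e6.12} and then transport it to $\group$ by conjugation through the pull-back $\pull{t}$, exactly as in the earlier sections. First I would record the transfer principle in the form needed here. By \eqref{eq:derivativeTransform1} the map $\pull{t}$ intertwines $\partial_{t_j}$ with $\pDiffGrpTo{x_j}$, hence $\pull{t}\,\partial_{t_j}^2=\pDiffGrpTo{x_j}^2\,\pull{t}$ and therefore $\pull{t}\,\Delta_{\R^n}=\Delta_G\,\pull{t}$; consequently $\pull{t}\,(1-\Delta_{\R^n})^{s/2}=(1-\Delta_G)^{s/2}\,\pull{t}$. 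Combined with the $L^p$-isometry property of $\pull{t}$, this shows that $\pull{t}:H_p^s(\R^n)\to H_p^s(\group)$ is an \emph{isometric} isomorphism for every $s\in\R$ and every $1<p<\infty$, which is the content of \eqref{e6.6}. By the operator correspondence \eqref{eq:PsiDOCorrespondence} one has $a_\group(x,\fD)=\pull{t}\,a_{\R^n}(x,D)\,\pull{x}$ with $\pull{x}=(\pull{t})^{-1}$, so $a_\group(x,\fD)$ is conjugate to $a_{\R^n}(x,D)$ by an isometric isomorphism. Hence the boundedness \eqref{e6.13} and the \emph{equality} of operator norms in \eqref{e6.14} follow immediately from the Euclidean bound \eqref{e6.12}, and the entire analytic content of the theorem is the Euclidean estimate.

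For the Euclidean estimate I would decompose the variable-coefficient operator into a superposition of Fourier multipliers composed with multiplication operators by half-space characteristic functions, precisely the operators supplied by \cref{l6.5}. Fixing the reference point $0$ and applying the fundamental theorem of calculus successively in the spatial variables $x_1,\dots,x_n$, one obtains, for each permutation $\kappa$ and each $0\leqslant k\leqslant n$, a representation
\[
a(x,\xi)=\sum\nolimits_{\ell}(\text{lower-order corner terms})+\int\limits_{\R^k}\Big(\prod_{i=1}^{k}\mathbf{1}_{\{y_i<x_{\kappa(i)}\}}\Big)\,\frac{\partial^{k}a^{\kappa}(y_1,\dots,y_k,0,\dots,0,\xi)}{\partial y_1\cdots\partial y_k}\,dy_1\cdots dy_k .
\]
Upon quantising, each term of the iterated integral becomes an operator $M_{\chi}\circ b_y(D)$, where $M_\chi$ is multiplication by an orthant indicator (a product of coordinate Heaviside functions) and $b_y(D)$ is the Fourier multiplier with symbol $\partial_{y_1}\cdots\partial_{y_k}a^{\kappa}(y,0,\dots,0,\xi)$. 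Assumption \textbf{A} guarantees that this multiplier symbol and its relevant $\xi$-derivatives lie in $\fM_p(\R^n)$, so the factor $b_y(D)$ is estimated in $\mathscr{B}(H_p^s)$ by its $\fM_p$-norm, while $M_\chi$ is controlled by the half-space multiplier theory of \cref{l6.5}. Integrating these bounds in $y$ and summing over $\kappa$ and $k$ reproduces exactly the quantities $M_{\kappa,\gamma}$ of \eqref{e6.11}, giving \eqref{e6.14}.

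The main obstacle is the sharp Sobolev range $-m-1+\tfrac1p<s<m+\tfrac1p$, which is governed by the $H_p^s$-behaviour of the orthant-multiplication operators $M_\chi$. A single coordinate Heaviside function is a pointwise multiplier on $H_p^s(\R)$ only in the base range $\tfrac1p-1<s<\tfrac1p$ of width one, which is where the parameter $1/p$ enters; \cref{l6.5} gives these operators at the level $s=0$ (i.e.\ on $L^p=H_p^0$), but for $s\neq0$ the bare indicator destroys regularity once $|s|$ is too large. The role of the frequency smoothness of order $m$ encoded in \textbf{A}--\textbf{B} is to absorb the Bessel weight $(1-\Delta)^{s/2}$: having $m$ $\xi$-derivatives of the multiplier symbols available (with summable $\fM_p$-norms via \eqref{e6.11}) permits one to commute $\langle D\rangle^{s}$ past the rough factors $M_\chi$ at the cost of lower-order terms, thereby extending the admissible interval by $m$ units on each side to total width $2m+1$, namely $(\tfrac1p-1-m,\ \tfrac1p+m)$. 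Carrying out this weight-absorption uniformly over all permutations $\kappa$, and organising the corner and remainder terms so that every appearing multiplier norm is dominated by some $M_{\kappa,\gamma}$, is the technical heart of the proof; the passage to $\group$ is then the purely formal conjugation described in the first paragraph.
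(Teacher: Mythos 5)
Your first paragraph, the transfer from $\R^n$ to $\group$, is exactly what the paper does: the paper proves \eqref{e6.13} and the norm identity in \eqref{e6.14} ``similarly to Theorem \ref{MainTheorem}'', i.e.\ by conjugating with the isometric isomorphism $\pull{t}:H_p^s(\R^n)\to H_p^s(G)$ of \eqref{e6.6} via the correspondence \eqref{eq:PsiDOCorrespondence}. That part of your argument is correct and complete. The divergence is in how the Euclidean bound \eqref{e6.12} is obtained: the paper does not prove it at all, it cites Shargorodsky \cite[Theorem 5.1]{Sh97}, whereas you attempt to prove it from scratch.

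That attempt contains a genuine gap, and it sits precisely where all the analytic difficulty of the theorem lives. Your decomposition of $a(x,\xi)$ by the fundamental theorem of calculus into orthant indicators times Fourier multipliers, combined with \cref{l6.5} and the fact that a Fourier multiplier has the same norm on $H_p^s$ as on $L^p$, only yields boundedness in the base window $\tfrac1p-1<s<\tfrac1p$, because that is the exact range in which a coordinate Heaviside function is a pointwise multiplier of $H_p^s$. To reach the asserted range $-m-1+\tfrac1p<s<m+\tfrac1p$ you invoke a ``weight-absorption'' step --- commuting $(1-\Delta_{\R^n})^{s/2}$ past the rough factors $M_\chi$ using the $m$ available $\xi$-derivatives from hypotheses \textbf{A}--\textbf{B} --- but you never carry it out: there is no commutator identity, no control of the error terms by the quantities $M_{\kappa,\gamma}$ of \eqref{e6.11}, and no induction that actually widens the interval by $m$ units on each side. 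You yourself label this step ``the technical heart of the proof,'' which is accurate; as written, it is a plan rather than an argument, and executing it would amount to reproving Shargorodsky's theorem. Either supply that commutation argument in full, or do what the paper does and quote \cite[Theorem 5.1]{Sh97} for \eqref{e6.12}, after which your first paragraph finishes the proof.
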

\begin{proof} The continuity of the operator $a_{\R^n}(x,D)$ in \eqref{e6.12} is proved by E. Shargorodsky in \cite[Theorem 5.1]{Sh97}.

The continuity of the operator $a_{\group}(x,\fD)$ in \eqref{e6.13} is proved similarly to Theorem \ref{MainTheorem} and Theorem \ref{S:Garding:Inequality} above, based on the continuity of the operator $a_{\R^n}(x,D)$ in \eqref{e6.12}.
\end{proof}
\begin{definition}[The class  ${\bf P}{\bf C}_p(\R^n)$]
Let ${\bf P}{\bf C}_p(\R^n)$ denote the subalgebra of $\fM_p(\R^n)$, obtained by closing the algebra ${\bf P}{\bf C}(\R^n)$ of piecewise constant functions on polytope domains in the norm of the multiplier algebra $\fM_p(\R^n)$.
\end{definition}

From Lemma \ref{l6.5} and Theorem \ref{t6.6}  it  follows that we have the following property.
 %
 \begin{corollary}\label{c6.7}
 Let $1<p<\infty$, $m\in \mathbb{N}$ and assume that  $a=a(x,\xi)$ satisfies the following symbol conditions
\begin{equation}\label{e6.15}
a_{(\beta+\gamma)}\in C(\R^n,{\bf P}{\bf C}_p(\R^n)), \quad
      \beta,\gamma\in\N_0^n,\quad |\beta|\leqslant1,\quad|\gamma|\leq m;
\end{equation}
If the condition \eqref{e6.11} holds, then the pseudo-differential operator $a_{\R^n}(t,D)$ in \eqref{e6.14} and the pseudo-differential operator $a_G(x,\fD)$ in \eqref{e6.15} are bounded and the inequality \eqref{e6.14} holds.
\end{corollary}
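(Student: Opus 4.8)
The plan is to deduce this corollary directly from \cref{l6.5} and \cref{t6.6} by verifying that the symbol hypothesis \eqref{e6.15}, which is phrased in terms of the algebra ${\bf P}{\bf C}_p(\R^n)$, is nothing but a special case of hypothesis {\bf A} of \cref{t6.6}, which is phrased in terms of the larger multiplier algebra $\fM_p(\R^n)$. Once this observation is made, the proof amounts to quoting \cref{t6.6}.

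First I would establish the inclusion ${\bf P}{\bf C}_p(\R^n)\subset\fM_p(\R^n)$. By \cref{l6.5} the algebra of piecewise constant functions satisfies ${\bf P}{\bf C}(\R^n)\subset\fM_p(\R^n)$ for every $1<p<\infty$. Since $\fM_p(\R^n)$ is a Banach algebra with respect to the induced operator norm $\|\cdot\|_{\mathscr{B}(L^p(\R^n))}$ (as recalled just after the definition of $\fM_p(\R^n)$), it is in particular complete, so the $\fM_p$-closure of any subset of $\fM_p(\R^n)$ remains inside $\fM_p(\R^n)$. As ${\bf P}{\bf C}_p(\R^n)$ is by definition precisely the closure of ${\bf P}{\bf C}(\R^n)$ in the $\fM_p$-norm, this yields the contractive inclusion ${\bf P}{\bf C}_p(\R^n)\subset\fM_p(\R^n)$.

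With this inclusion in hand, the symbol condition \eqref{e6.15}, namely $a_{(\beta+\gamma)}\in C(\R^n,{\bf P}{\bf C}_p(\R^n))$ for $|\beta|\leqslant1$ and $|\gamma|\leqslant m$, immediately gives $a_{(\beta+\gamma)}\in C(\R^n,\fM_p(\R^n))$ for the same range of multi-indices, which is exactly hypothesis {\bf A} of \cref{t6.6}. Hypothesis {\bf B}, that is, the finiteness of the quantities $M_{\kappa,\gamma}$ in \eqref{e6.11}, is assumed outright in the statement. Thus both hypotheses of \cref{t6.6} are met, and applying that theorem produces the boundedness of $a_{\R^n}(x,D)$ on ${H}_p^s(\R^n)$ and of $a_\group(x,\fD)$ on ${H}_p^s(G)$ in the range $-m-1+\tfrac1p<s<m+\tfrac1p$, together with the operator-norm estimate \eqref{e6.14}.

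The only genuine content is the soft inclusion ${\bf P}{\bf C}_p(\R^n)\subset\fM_p(\R^n)$, so there is no real obstacle here; the single point requiring care is that one is entitled to take the $\fM_p$-closure \emph{within} $\fM_p(\R^n)$, which is guaranteed by the completeness of the multiplier algebra established earlier. Everything else is a formal substitution of the smaller algebra into the hypotheses of \cref{t6.6}.
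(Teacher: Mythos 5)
Your proof is correct and follows essentially the same route the paper intends: the corollary is stated there as an immediate consequence of \cref{l6.5} and \cref{t6.6}, precisely because ${\bf P}{\bf C}_p(\R^n)$ is (by definition, and by the inclusion from \cref{l6.5} that makes the closure well-posed) a closed subalgebra of $\fM_p(\R^n)$, so condition \eqref{e6.15} implies hypothesis {\bf A} while hypothesis {\bf B} is assumed outright. Nothing is missing.
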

\begin{remark}
By applying the well known inequality
 \begin{eqnarray}\label{e6.15}
\|a\|_{\fM_p(\R^n) }:=\|W^0_{a,G}\|_{L^p(G)}=\|W^0_{a,\R^n}\|_{L^p(\R^n) }\nonumber\\
     \geqslant\|a\|_{\fM_2(\R^n)}=\|a\|_{L^\infty(\R^n)  }
 \end{eqnarray}
(cf. \cite{Hr60}), it can easily be checked, that any function $a(\xi)$ from ${\bf P}{\bf C}_p(\R^n)$ have radial limits at any point including the infinity $\xi\in\R^n_\bullet$ (the spherical compactification of $\R^n$ at the infinity), where limits are taken along all beams emerging from $\xi\in\R^n_\bullet$:
 \begin{eqnarray}\label{e6.16}
 a(\xi,\omega):=\lim_{\lambda\to0} a(\xi+\lambda\omega), \qquad a(\infty,\omega):=\lim_{\lambda\to\infty} a(\lambda\omega)\\
 \mbox{for all}\quad \xi\in\R^n,\quad \omega\in\B_1(0),\quad|\omega|=1.\nonumber
 \end{eqnarray}
where $\B_1(0)$ is the unit sphere centered at $0$.
\end{remark}

\subsection{Fredholm properties of $\Psi$DOs} Here we prove a theorem on Fredholm properties of $\Psi$DOs with non-classical symbols on the group $G$. First we investigate a particular case of convolution operators. We denote by $\R^n_\bullet$ the  spherical compactification of $\R^n$ at the infinity.
\begin{theorem}\label{t6.8}
Let $a(\xi)$ be a complex valued matrix symbol $a\in{\bf P}{\bf C}^{m\times m}_p(\R^n)$.
The convolution operators
 \begin{eqnarray}\label{e6.17}
a_{\R^n}(D)=W^0_{a,\R^n}\;:\;{H}_p^s(\R^n)\to {H}_p^s(\R^n),\\
\label{e6.18}
a_\group(\fD)=W^0_{a,G}\;:\;{H}_p^s(G)\to{H}_p^s(G)
 \end{eqnarray}
are Fredholm if and only if the symbol is elliptic
 \[
  \inf_{(\xi,\omega)\in\R^n_\bullet\times\B_1(0)}|a(\xi,\omega)|>0.
 \]
Moreover, if the ellipticity holds, the operators in \eqref{e6.17} and in \eqref{e6.18} are invertible for all $p\in(1,\infty)$ and all $s\in\R$ and the inverse operators read, respectively, $W^0_{a^{-1},\R^n}$ and $W^0_{a^{-1},G}$.
\end{theorem}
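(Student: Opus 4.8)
The plan is to transfer the whole problem from $G$ to $\R^n$ and then reduce both assertions to the Banach-algebra structure of ${\bf P}{\bf C}_p(\R^n)$, so that no harmonic analysis on $G$ is needed beyond the change of variables. First I would invoke the isomorphism $t_*:H^s_p(\R^n)\to H^s_p(G)$ from \eqref{e6.6} together with the intertwining $W^0_{a,G}=t_*\,W^0_{a,\R^n}\,t_*^{-1}$, which is \eqref{eq:PsiDOCorrespondence} specialised to the $x$-independent symbol $a=a(\xi)$. Since $t_*$ is a topological isomorphism of the Sobolev spaces for every $s$ and $p$, the operator $W^0_{a,G}$ is Fredholm, respectively invertible, on $H^s_p(G)$ precisely when $W^0_{a,\R^n}$ is so on $H^s_p(\R^n)$, and $t_*$ conjugates one inverse into the other. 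Thus it suffices to treat $W^0_{a,\R^n}$, after which the formula for $W^0_{a^{-1},G}$ drops out of its Euclidean counterpart.

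For sufficiency (ellipticity $\Rightarrow$ invertibility) I would work inside the commutative Banach algebra ${\bf P}{\bf C}_p(\R^n)$. The entries of $a$ lie in ${\bf P}{\bf C}_p(\R^n)$, hence so do $\det a$ and the adjugate $\mathrm{adj}(a)$, and ellipticity $\inf_{(\xi,\omega)}|a(\xi,\omega)|>0$ says exactly that $\det a$ does not vanish at any point of the radial-limit data. The decisive input is that the radial limits $(\xi,\omega)\mapsto a(\xi,\omega)$ realise the Gelfand/symbol map of ${\bf P}{\bf C}_p(\R^n)$, so a non-vanishing determinant is invertible inside the algebra and $a^{-1}=(\det a)^{-1}\,\mathrm{adj}(a)\in{\bf P}{\bf C}_p^{m\times m}(\R^n)\subset\fM_p^{m\times m}(\R^n)$. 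Once $a^{-1}$ is an $L^p$-multiplier, the homomorphism property $W^0_{g,\R^n}W^0_{h,\R^n}=W^0_{gh,\R^n}$ yields $W^0_{a,\R^n}W^0_{a^{-1},\R^n}=W^0_{I,\R^n}=I$ and symmetrically, so $W^0_{a,\R^n}$ is boundedly invertible with inverse $W^0_{a^{-1},\R^n}$; as the multiplier property does not depend on $s$ (the remark following \cref{p6.1}), this holds for all $s\in\R$ at once, and invertibility trivially gives the Fredholm property.

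For necessity I would argue by contraposition. If ellipticity fails there is a point $(\xi_0,\omega_0)\in\R^n_\bullet\times\B_1(0)$ and a unit vector $v\in\mathbb{C}^m$ with $a(\xi_0,\omega_0)v=0$. Using the local piecewise-constant structure of the symbol I would construct a singular Weyl sequence $u_j$ whose Fourier transforms concentrate along the ray through $(\xi_0,\omega_0)$, so that $W^0_{a,\R^n}$ acts on them asymptotically as multiplication by $a(\xi_0,\omega_0)$; pairing with $v$ produces $\|u_j\|_{H^s_p}\asymp 1$ with no convergent subsequence while $\|W^0_{a,\R^n}u_j\|_{H^s_p}\to 0$, which destroys the semi-Fredholm property. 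This is the classical local principle for convolution operators with piecewise-continuous symbols, for which I would cite Duduchava \cite{Du79} and Shargorodsky \cite{Sh97}. Combining the two directions closes the cycle: elliptic $\Rightarrow$ invertible $\Rightarrow$ Fredholm $\Rightarrow$ elliptic.

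The step I expect to be the real obstacle is the inverse-closedness of ${\bf P}{\bf C}_p(\R^n)$ in $\fM_p(\R^n)$, equivalently the identification of its maximal ideal space with the radial-limit space $\R^n_\bullet\times\B_1(0)$. This is where the dependence on $p$ genuinely enters---one must control the $\fM_p$-norm of $(\det a)^{-1}$ assembled from the generating multipliers $\mathcal{H}^k_\pm$ and their affine images from \cref{l6.5}---and it is the only point that is not a formal consequence of the change of variables or of the algebra homomorphism $W^0_gW^0_h=W^0_{gh}$.
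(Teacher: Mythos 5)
Your reduction from $G$ to $\R^n$ via $t_*$, and the overall cycle (elliptic $\Rightarrow$ invertible $\Rightarrow$ Fredholm $\Rightarrow$ elliptic), is sound in structure, but the sufficiency leg contains a genuine gap: you invoke, as the ``decisive input'', that the radial-limit map realises the Gelfand transform of ${\bf P}{\bf C}_p(\R^n)$, so that non-vanishing of $\det a$ on $\R^n_\bullet\times\B_1(0)$ gives $a^{-1}\in{\bf P}{\bf C}^{m\times m}_p(\R^n)$. For $p\neq 2$ this identification of the maximal ideal space is not a citable black box independent of the theorem --- up to the standard rigidity of translation-invariant operators (a Fourier multiplier is Fredholm on $L^p(\R^n)$ iff invertible, and its inverse is again a multiplier), it \emph{is} the theorem. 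The paper's proof exists precisely to supply this point: it runs the Gohberg--Krupnik localization method, covering $\R^n_\bullet\times\B_1(0)$ by localizing classes $\Delta_{(\xi_0,\omega_0)}$ of convolution operators whose symbols are characteristic functions of polytope neighbourhoods (multipliers by Lemma \ref{l6.5}), uses H\"ormander's interpolation inequality \eqref{e6.19} to prove the local equivalence $W^0_{a,G}\overset{\Delta_{(\xi_0,\omega_0)}}{\sim}a(\xi_0,\omega_0)I$, and then invokes the main theorem on localization to get Fredholm $\Leftrightarrow$ elliptic; only afterwards does one conclude $a^{-1}\in\fM_p(\R^n)$ and hence the formula $W^0_{a^{-1},G}$ for the inverse. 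Your proposal defers exactly this content to an unproven assertion, so as written it is circular: any proof of the maximal-ideal-space identification would have to contain an argument of this localization (or equivalent local-principle) type.

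Two smaller points. First, in the necessity direction your contraposition assumes an exact null vector $a(\xi_0,\omega_0)v=0$; since the radial-limit function of a piecewise-constant symbol is not continuous on $\R^n_\bullet\times\B_1(0)$ (at a polytope vertex it is only piecewise constant in $\omega$), failure of ellipticity only yields $|a(\xi_j,\omega_j)v_j|\to 0$ along a sequence, so the Weyl sequence must be built from approximate null vectors. This is fixable, and if carried out it is a legitimate alternative to the paper, which derives necessity from the same local principle rather than from a concentration argument. Second, note that even for genuinely piecewise-constant approximants $a_j\to a$ in $\fM_p(\R^n)$, ellipticity gives a bound on $\|a_j^{-1}\|_{L^\infty(\R^n)}$ but not, without further argument, on $\|a_j^{-1}\|_{\fM_p(\R^n)}$ (the bound of Lemma \ref{l6.5} depends on the polytope decomposition, which may degenerate as $j\to\infty$); this is another face of the same missing step, and it shows that a naive limiting argument cannot replace the local principle either.
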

\begin{proof} We divide the proof in the following steps.

\begin{itemize}
    \item Step 1. We apply the Gohberg-Krupnik's localization method.
    \item Step 2. We introduce a class of localising operators $\Delta_{(\xi_0,\omega_0)}.$ 
    \item We locally  approximate the symbol $a(\xi)$ with the elements of the class $\Delta_{(\xi_0,\omega_0)}$ and we conclude the proof. 
\end{itemize}
 Let us start with the  Gohberg-Krupnik's localization method.
 
\begin{itemize}
    \item Step 1.
We will apply Gohberg-Krupnik's localization method, described in \cite[Ch. 5]{GK79}, \cite[\S\,1.$7^0$]{Du79}, \cite{SC86} and \cite{BKS88}. Let us consider the set $\R^n_\bullet\times\B_1(0)$, where $\R^n_\bullet:=\R^n\cup\{\infty\}$ denotes the one point compactifications of the space $\R^n$. We cover the set $\R^n_\bullet\times\B_1(0)$ by  polytope cones, defined as follows. Neighbourhoods of a point $(\xi_0,\omega_0)$, $\xi_0\not=\infty$, $\omega_0\in\B_1(0)$, is a part of polytope cone with the vertex at $\xi_0$, inside a small polytope domain containing $\xi_0$ and containing the point $\xi_0+\varepsilon\omega_0$ for some small $\varepsilon>0$. Neighbourhoods of a point $(\infty,\omega_0)$, $\omega_0\in\B_1(0)$, is a part of polytope cone with the vertex at $0$, open to infinity, outside a big polytope neighbourhood of $0$ and containing the point $R\omega_0$ for some large $R>0$.

\item  The localizing class $\Delta_{(\xi_0,\omega_0)}$, $(\xi_0,\omega_0)\in\R^n_\bullet\times\B_1(0)$, comprizes operators $W^0_{h,G}$, with symbols $h(\xi)$ which are characteristic functions of all kind of non-empty polytope neighbourhoods of $(\xi_0,\omega_0)$. It is clear, that the system of localizing classes  $\left\{\Delta_{(\xi_0,\omega_0)}\right\}_{(\xi_0,\omega_0)\in\R^n_\bullet\times\B_1(0)}$ are covering in the algebra of linear bounded operators $$ \mathscr{B}({H}_p^s(\R^n)).$$  Indeed, from any collection of operators $\left\{W^0_{h_{(\xi_0,\omega_0)},G}\right\}_{(\xi_0,\omega_0)\in\R^n_\bullet\times\B_1(0)}$ we can select a finite number of operators $\left\{W^0_{h_k,G}\right\}_{k=1}^N$ such that the sum
\[
\sum_{k=1}^nW^0_{h_k,G}=W^0_{h_0,G},\qquad h_0(\xi):=\sum_{k=1}^nh_k(\xi)
\]
is invertible ($h_0\in{\bf P}{\bf C}^{m\times m}_p(\R^n)$ is elliptic) and the inverse reads $W^0_{h^{-1}_0,G}$, because $h^{-1}_0\in{\bf P}{\bf C}^{m\times m}_p(\R^n)$.

Obviously, operators from $\Delta_{\xi_0,\omega_0}$ commute with convolutions $W^0_{b,G}$, $b\in{\bf P}{\bf C}^{m\times m}_p(\R^n)$, because they are all convolutions.

Using the well known H\"ormander's inequality (cf. \cite{Hr60})
 \begin{eqnarray}\label{e6.19}
\|a\|_{\fM_p(\R^n)}=\|a\|_{\fM_{p'}(\R^n)}=\|W^0_{a,G}\|_{L^p(G)}
     =\|W^0_{a,\R^n}\|_{L^p(\R^n)}\nonumber\\
\leqslant\left[\sup_{\xi\in\R^n}|a(\xi)|\right]^\theta\|a\|_{\fM_r(\R^n)}^{1-\theta},\\
    p\not=2,\qquad p\in(r,r'), \qquad p'=\frac{p}{p-1},\qquad\theta=\frac2p\frac{|r-p|}{|r-2|},\nonumber
 \end{eqnarray}
 \item 
we prove the local equivalence
 \[
W^0_{a,G}\overset{\Delta_{(\xi_0,\omega_0)}}{\sim} W^0_{a(\xi_0,\omega_0),G}
     =a(\xi_0,\omega_0)I, \qquad (\xi_0,\omega_0)\in\R^n_\bullet\times\B_1(0),
 \]
(see \eqref{e6.16} for $a(\xi_0,\omega_0)$) which means that
 \begin{eqnarray*}
 \inf_{W^0_{h,G}\in\Delta_{(\xi_0,\omega_0)}}\|W^0_{h,G}[W^0_{a,G}-a(\xi_0,\omega_0)I]
      \big|L^p(G)\|\\
 =\inf_{W^0_{h,G}\in\Delta_{(\xi_0,\omega_0)}}\|W^0_{h[a-a(\xi_0,\omega_0)]}
      \big|L^p(G)\|=0.
 \end{eqnarray*}

 According to the main theorem on localization operators, $W^0_{a,G}$ is Fredholm if and only if the local representatives (which are in our case multiplication by constants $a(\xi_0,\omega_0)I$) are locally invertible for all $(\xi_0,\omega_0)\in\R^n_\bullet\times\B_1(0)$: There exists operator $R_{(\xi_0,\omega_0)}$ and an element of the localazin class $W^0_{h,G}\in\Delta_{(\xi_0,\omega_0)}$ such that the equalities hold:
 \[
W^0_{h,G}a(\xi_0,\omega_0)R_{(\xi_0,\omega_0)}=R_{(\xi_0,\omega_0)}a(\xi_0,\omega_0)W^0_{h,G}
=W^0_{h,G}\qquad \forall\; (\xi_0,\omega_0)\in\R^n_\bullet\times\B_1(0).
 \]
But the local invertibility of the multiplication by a constant $a(\xi_0,\omega_0)I$ coincides with its global invertibility and coincides with the ellipticity condition at the point $a(\xi_0,\omega_0)\not=0$. Thus, ellipticity means Fredholmness, but since for an elliptic symbol $a(\xi)$ the inverse is also a multiplier $a^{-1}\in{\bf P}{\bf C}^{m\times m}_p(\R^n)$, the inverse to $W^0_{a,G}$ is $W^0_{a^{-1},G}$. A similar property holds for the operator $W^0_{a,\R^n}$.
\end{itemize} 
\end{proof}
\begin{definition}[A class of homogeneous of order zero symbols]
For a function $a(x,\xi)$ satisfying conditions \eqref{e6.15}, we define the following limit functions
 \begin{eqnarray}\label{e6.20}
a_\infty(x,\xi):=\lim_{\lambda\to\infty}a(x,\lambda\xi)
 \end{eqnarray}
which are homogeneous of order $0$ in $\xi$: $a_\infty(x,\theta\xi)=a_\infty(x,\xi)$ for all $\theta>0$, $x\in\R^n$, $\xi\in\R^n$.
\end{definition}

Prior we formulate and prove the main theorem of the present section, we expose the result about local operators, due to V.N. Semenyuta and A.V. Kozak and exposed with the proof in \cite[Proposition 5.7]{BKS88} and in \cite[Definition 2.5, Theorem 2.5]{SC86}.

\begin{definition}[Local type operators] For any $s\in \mathbb{R},$  let $\mathscr{C}(H^s_p (G))$ be the family of compact operators on $H^s_p (G).$ An
operator $A\in\mathscr{B}({H}^s_p (G))$ is of local type if for all $v_1,v_2\in C^\infty(G)$ such that ${\rm supp}\,v_1\cap{\rm supp}\,v_2=\emptyset,$ the localisation operator $$ v_1Av_2I$$ is a compact operator on  $H^s_p (G),$ that is   $v_1Av_2I\in\mathscr{C}(H^s_p (G)).$ The class of local type operators on $\mathscr{B}(H^s_p (G)$ will be denoted by $\mathscr{B}({LH}^s_p (G)).$
\end{definition}

In the following proposition the class of local type operators is characterised by a commutator criterion.
%
\begin{proposition}\label{p6.9}
Operator $A\in\mathscr{B}({LH}^s_p(G))$ is of local type if and only if
the commutator $[A,bI]:=AbI-bA$ is compact $[A,bI]\in\mathscr{C}({LH}^s_p (G,d\mu_G))$ for all smoooth functions $b\in C^\infty(\dG)$, where $\dG$ is the one point compactifications of the Lie group $G,$ namely, $\dG=G\cup\{\infty\}$.
\end{proposition}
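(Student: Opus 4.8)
The plan is to establish the two implications separately, using throughout that the compact operators form a two-sided ideal in $\mathscr{B}(H^s_p(G))$ and that multiplication by a function in $C^{\infty}(\dG)$ is bounded on $H^s_p(G)$. For the implication from the commutator criterion to the local type property, suppose $[A,bI]\in\mathscr{C}(H^s_p(G))$ for every $b\in C^{\infty}(\dG)$, and let $v_1,v_2\in C^{\infty}(G)$ have disjoint supports. The first step is to separate the supports by a single cut-off: choose $b\in C^{\infty}(\dG)$ with $b\equiv 1$ on $\mathrm{supp}\,v_1$ and $b\equiv 0$ on $\mathrm{supp}\,v_2$, which is possible by smooth Urysohn on the compact manifold $\dG$ (the point at infinity being itself one of the localisation points). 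Then $bv_1=v_1$ and $bv_2=0$, so a direct computation gives
\begin{equation*}
 v_1[A,bI]v_2=v_1A(bv_2)-(v_1b)Av_2=-v_1Av_2,
\end{equation*}
whence $v_1Av_2I=-v_1[A,bI]v_2I$ is compact, being the composition of the compact operator $[A,bI]$ with the bounded multiplications $v_1$ and $v_2$. Thus $A$ is of local type.

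The substantial direction is from the local type property to the commutator criterion. Fix $b\in C^{\infty}(\dG)$ and $\varepsilon>0$. I would choose a finite partition of unity $\{\phi_j\}_{j=1}^{M}$ on $\dG$, of bounded multiplicity and so fine that the oscillation of $b$ on each $\mathrm{supp}\,\phi_j$ together with its neighbours is smaller than $\varepsilon$, and expand
\begin{equation*}
 [A,bI]=\sum_{j,k}\phi_j[A,bI]\phi_k .
\end{equation*}
For pairs with $\mathrm{supp}\,\phi_j\cap\mathrm{supp}\,\phi_k=\emptyset$, commuting the scalar factors gives $\phi_j[A,bI]\phi_k=\phi_jA(b\phi_k)-(b\phi_j)A\phi_k$, a sum of terms of the form $w_1Aw_2$ with $\mathrm{supp}\,w_1\cap\mathrm{supp}\,w_2=\emptyset$, hence compact by the local type hypothesis. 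For the remaining near-diagonal pairs I would subtract a constant $c_{jk}$ equal to a value of $b$ in the overlap region; since $[A,c_{jk}I]=0$ we get $\phi_j[A,bI]\phi_k=\phi_j[A,(b-c_{jk})I]\phi_k$, whose norm is controlled by the oscillation of $b$ and is therefore $O(\varepsilon\|A\|)$. Summing, $[A,bI]=K_\varepsilon+R_\varepsilon$ with $K_\varepsilon$ compact and $\|R_\varepsilon\|\lesssim\varepsilon$; letting $\varepsilon\to0$ and using that $\mathscr{C}(H^s_p(G))$ is closed in the operator norm yields $[A,bI]\in\mathscr{C}(H^s_p(G))$.

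The main obstacle is precisely this near-diagonal estimate: keeping the sum of the small terms uniformly $O(\varepsilon)$ while refining the partition requires the cover to have bounded multiplicity, so that only a fixed number of neighbours contribute to each index, together with a Cotlar--Stein / almost-orthogonality type argument to add the individual small pieces without destroying the gain in $\varepsilon$. Finally, I note that the whole statement can alternatively be reduced to its Euclidean counterpart---the classical Semenyuta--Kozak result recorded in \cite{BKS88,SC86}---by conjugating with the isometric isomorphism $t_{*}:H^s_p(\R^n)\to H^s_p(G)$, under which local type operators, compact operators, and multiplications by $C^{\infty}(\dG)$ functions all correspond, and disjointness of supports is preserved since $t$ is a homeomorphism.
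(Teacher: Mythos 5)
Both directions of your direct argument contain genuine gaps, and it is worth noting at the outset that the paper itself does not prove this proposition: it is quoted as the Semenyuta--Kozak theorem, with the proof delegated to \cite[Proposition 5.7]{BKS88} and \cite[Theorem 2.5]{SC86}. In your first implication (compact commutators $\Rightarrow$ local type), the Urysohn step is not available under the paper's literal definition of local type, which only requires $\mathrm{supp}\,v_1\cap\mathrm{supp}\,v_2=\emptyset$ \emph{in} $G$. Two such supports can both accumulate at the boundary of $(-1,1)^n$, so that their closures in $\dG$ both contain the point $\infty$; then no $b\in C^{\infty}(\dG)$ can equal $1$ on $\mathrm{supp}\,v_1$ and $0$ on $\mathrm{supp}\,v_2$, since $b(\infty)$ would have to be both values. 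This is not a removable technicality: transporting the Hilbert transform to $G$ by $t_*$ gives an operator whose commutators with every $b\in C^{\infty}(\dG)$ are compact (the classical fact underlying the Fredholm theory of singular integral operators with continuous coefficients), yet $v_1 A v_2 I$ is \emph{not} compact when $v_1,v_2$ are interlaced trains of bumps marching to infinity, with disjoint supports in $G$: testing against the translated bumps produces a weakly null sequence whose images have norms bounded below. So the equivalence is only true when ``disjoint supports'' is read in $\dG$, and your separation argument must be carried out there; as written, the step fails exactly in the case that distinguishes $G$ from a compact manifold.

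In the second implication (local type $\Rightarrow$ compact commutators) your argument is incomplete, as you yourself concede: after the partition of unity you are left with $O(N)$ near-diagonal terms, each of norm $O(\varepsilon\Vert A\Vert)$, and a triangle-inequality summation loses the gain (for a Lipschitz $b$ on an $n$-dimensional space one has $N\varepsilon\sim\delta^{1-n}$, which diverges for $n\geqslant 2$). The Cotlar--Stein device you invoke to repair this is a Hilbert-space tool and is not available on $H^s_p(G)$ for $p\neq 2$; what actually works on $L^p$ is a bounded-overlap/H\"older argument exploiting that the inputs $\phi_j u$ and the outputs (supported in the enlarged patches) have bounded multiplicity, but you do not carry this out. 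More seriously, for $s\neq 0$ even the \emph{individual} smallness claims are unjustified: the operator norm of multiplication by $(b-c_{jk})\phi_j$ on $H^s_p(G)$ is not controlled by its sup-norm but involves derivatives of the cutoffs, and these blow up as the partition is refined, so the oscillation of $b$ buys you nothing at the level of $\mathscr{B}(H^s_p(G))$. By contrast, your closing observation --- conjugate by the isometric isomorphism $t_*:H^s_p(\R^n)\to H^s_p(G)$, under which compact operators, multiplications by functions continuous on the one-point compactification, and (compactification-)disjoint supports all correspond, and then invoke the Euclidean Semenyuta--Kozak theorem --- is sound and is essentially the paper's own route; had you taken it as the primary argument rather than as a remark, the two defective paragraphs above would be unnecessary.
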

%
\begin{definition}\label{d6.10}
Let $C^0_p(G,{\bf P}{\bf C}_p(\R^n))$ denote the class of symbols of $\Psi$DOs which are closure (in the norm of $\mathscr{B}(L^p(G)$) of operators
 \begin{eqnarray}\label{e6.21}
\sum_{k=1}^na_k(x)W^0_{g_k,G},\qquad a_k\in C(\dG), \quad g_k\in{\bf P}{\bf C}_p(\R^n),
      k=1,2,\ldots,N,
 \end{eqnarray}
when $n=1,2,\ldots$ is not fixed.
\end{definition}
\begin{remark}
Note that, due to Proposition \ref{p6.9}, the commutator of the operators $a(x,\fD),\; b(x,\fD)$ with symbols from the class $C^0_p(G,{\bf P}{\bf C}_p(\R^n))$, is compact
 \begin{eqnarray}\label{e6.22}
a(x,\fD)b(x,\fD)-b(x,\fD)a(x,\fD)\in\mathscr{C}(L^p(G)).
 \end{eqnarray}
Indeed, for this we only need to check that the commutator $[aW^0_{b,G},W^0_{b,G}aI]:=aW^0_{b,G}-W^0_{b,G}aI$, $a\in\Delta_{x_0}$, $b\in{\bf P}{\bf C}_p(\R^n)$ is compact. It is easy to ascertain that $W^0_{b,G}$ are operators of local type: $v_1W^0_{b,G}v_2I\in\mathscr{C}({LH}^s_p (G,d\mu_G))$ for any functions $v_1$ and $v_2$ with disjoint supports, because its kernel $k(x,y)$ is $C^\infty$-smooth and uniformly bounded. Then, due to Proposition \ref{p6.9}, the commutator is also compact $[aW^0_{b,G},W^0_{b,G}aI]\in\mathscr{C}({LH}^s_p (G,d\mu_G))$.
\end{remark}

%
\begin{theorem}\label{t6.11}
Let $1<p<\infty$ and let a matrix symbol $a=a(x,\xi)\;:\;\dG\times\R^n_\bullet\to \C^{m\times m}$ belong to the class $C^0_p(G,{\bf P}{\bf C}_p(\R^n))$.

The $\Psi$DO
 \begin{eqnarray}\label{e6.23}
a_\group(x,\fD)\;:\; L^p (G)\to L^p (G)
 \end{eqnarray}
is Fredholm if and only if the symbol is elliptic
 \[
  \inf_{(x,\xi,\omega)\in\dG\times\R^n_\bullet\times\B_1(0)}|a(x,\xi,\omega)|>0.
 \]
\end{theorem}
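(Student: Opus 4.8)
The plan is to derive the variable-coefficient statement from the convolution case of Theorem~\ref{t6.8} by a second, \emph{spatial}, Gohberg--Krupnik localisation. The decisive structural fact is the one recorded after Definition~\ref{d6.10}: operators with symbols in $C^0_p(G,{\bf P}{\bf C}_p(\R^n))$ are of local type and commute modulo $\mathscr{C}(L^p(G))$ with every multiplication operator $b(x)I$, $b\in C(\dG)$ (Proposition~\ref{p6.9}). Hence the commutative algebra $\{b(x)I:b\in C(\dG)\}$ maps into the centre of the Calkin algebra $\mathscr{B}(L^p(G))/\mathscr{C}(L^p(G))$, which is precisely the situation in which the localisation principle can be applied over the compact base $\dG$.

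First I would fix, for each $x_0\in\dG$, a localising class $\Delta_{x_0}$ consisting of the multiplication operators $vI$ with $v\in C^\infty(\dG)$ supported in a small neighbourhood of $x_0$ and equal to $1$ near $x_0$. Because $\dG$ is compact, a partition-of-unity argument shows that $\{\Delta_{x_0}\}_{x_0\in\dG}$ is a covering system of localising classes in $\mathscr{B}(L^p(G))/\mathscr{C}(L^p(G))$.

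Next I would compute the local representatives. For a generating operator $a_k(x)W^0_{g_k,G}$ the continuity of $a_k\in C(\dG)$ together with the boundedness of $W^0_{g_k,G}$ gives
\[
\bigl\|\,v\,[a_k(x)-a_k(x_0)]\,W^0_{g_k,G}\,\bigr\|_{\mathscr{B}(L^p(G))}\longrightarrow 0
\]
as $\mathrm{supp}\,v$ shrinks to $x_0$; passing to finite sums and then to the norm closure defining the class, this yields the local equivalence
\[
a_\group(x,\fD)\;\overset{\Delta_{x_0}}{\sim}\;W^0_{a(x_0,\cdot),G},
\qquad a(x_0,\cdot)\in{\bf P}{\bf C}_p(\R^n).
\]
The local representative at $x_0$ is therefore the frozen-coefficient convolution operator, to which Theorem~\ref{t6.8} applies directly: it is locally invertible if and only if the frozen symbol is elliptic, i.e. $\inf_{(\xi,\omega)\in\R^n_\bullet\times\B_1(0)}|a(x_0,\xi,\omega)|>0$. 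By the main theorem on localisation, $a_\group(x,\fD)$ is Fredholm if and only if all local representatives are locally invertible, which by the above means that this pointwise ellipticity holds for every $x_0\in\dG$. Finally, the continuity of $a$ in $x$ together with the compactness of $\dG\times\R^n_\bullet\times\B_1(0)$ turns the pointwise condition into the uniform one $\inf_{(x,\xi,\omega)}|a(x,\xi,\omega)|>0$, and this chain of equivalences gives the theorem in both directions.

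The hard part will be the bookkeeping of the localisation framework rather than any single estimate: one must verify carefully that $\{\Delta_{x_0}\}$ is a genuine covering system and that the operators of the class are localisable with respect to it, which is exactly where the compactness of the commutators $[a_\group(x,\fD),bI]$ furnished by Proposition~\ref{p6.9} is indispensable, since it ensures that the spatial localisation does not disturb the frequency (convolution) structure exploited in Theorem~\ref{t6.8}. A secondary point that requires attention is the passage from pointwise to uniform ellipticity, where the compactness of the base guarantees that the local invertibility constants stay bounded away from zero.
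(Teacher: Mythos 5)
Your ``if'' direction is essentially sound: when the symbol is elliptic, Theorem~\ref{t6.8} makes each frozen operator $W^0_{a(x_0,\cdot),G}$ invertible, hence locally invertible, and the covering and commutation properties you invoke (which do hold, by Proposition~\ref{p6.9} and compactness of $\dG$) then give Fredholmness via the local principle. The genuine gap is in the ``only if'' direction, at the step where you claim that Theorem~\ref{t6.8} ``applies directly'' to show that the local representative $W^0_{a(x_0,\cdot),G}$ is \emph{locally} invertible with respect to the purely spatial class $\Delta_{x_0}$ if and only if the frozen symbol is elliptic. Theorem~\ref{t6.8} characterises Fredholmness (equivalently, invertibility) of convolution operators, not local invertibility relative to spatial cutoffs, and at a finite point $x_0\in G$ these two notions genuinely differ. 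Concretely, take $n=1$, $p=2$ and the frozen symbol $b(\xi)=\xi^2/(1+\xi^2)=1-(1+\xi^2)^{-1}$, which lies in ${\bf P}{\bf C}_2(\R)$ (it is a uniform limit of piecewise constant functions). The operator $W^0_{b,G}$ is \emph{not} Fredholm, since $b(0)=0$; yet for every cutoff $v\in C^\infty_c(G)$ supported near a finite $x_0$ one has
\[
vW^0_{b,G}=vI-vW^0_{(1+\xi^2)^{-1},G},
\]
and $vW^0_{(1+\xi^2)^{-1},G}$ is compact: pulled back to $\R$ it becomes $\tilde v(t)(1-\Delta_{\R})^{-1}$ with $\tilde v$ compactly supported, which maps $L^2(\R)$ boundedly into $H^2$-functions supported in a fixed compact set, hence is compact by Rellich. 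So $[vW^0_{b,G}]=[vI]$ in the Calkin algebra, and $W^0_{b,G}$ is locally invertible at $x_0$ with local quasi-inverse $R=I$, although its symbol is not elliptic. Consequently ``Fredholm $\Rightarrow$ every frozen operator is locally invertible'' does not yield ``every frozen symbol is elliptic'', and your chain of equivalences breaks.

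The underlying obstruction is the uncertainty principle: the approximate kernel produced by a zero of the symbol at a \emph{finite} frequency consists of spatially delocalised functions, which no class of compactly supported multiplication operators near a finite $x_0$ can detect; such zeros are visible only to cutoffs concentrated at $x_0=\infty$, or to frequency cutoffs. At best, your spatial localisation can recover ellipticity of $a(x_0,\xi,\omega)$ at $\xi=\infty$ for finite $x_0$, together with full ellipticity of $a(\infty,\cdot,\cdot)$ --- not the stated condition over all of $\dG\times\R^n_\bullet\times\B_1(0)$. This is precisely why the paper does not localise in $x$ alone: it runs the Gohberg--Krupnik method over the full set $\dG\times\R^n_\bullet\times\B_1(0)$, with localising operators of the mixed form $gW^0_{h,G}$ ($g$ a spatial cutoff near $x_0$, $h$ the characteristic function of a polytope neighbourhood of $(\xi_0,\omega_0)$), so that the local representative of $a_\group(x,\fD)$ is the \emph{constant} $a(x_0,\xi_0,\omega_0)I$, whose local invertibility is transparently equivalent to $a(x_0,\xi_0,\omega_0)\neq 0$. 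To repair your argument you would either have to reinstate this frequency localisation (thereby recovering the paper's proof), or prove a separate lemma identifying exactly what local invertibility of a convolution operator at spatial points means --- and, as the example shows, it is not the ellipticity of the full symbol.
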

\begin{proof} We will apply again Gohberg-Krupnik's localization method (see Theorem \ref{t6.8}). Let us define localizing class $\Delta_{(x_0,\xi_0,\omega_0)}$ at the point $(x_0,\xi_0,\omega_0)\in\dG\times\R^n_\bullet\times\B_1(0)$, consisting of operatirs $gW^0_{h,G}$, where $|g(x)|\leqslant1$ is a smooth function, equal 1 in some neighbourhood of $x_0\in\dG$ and $h(\xi)$ is described in the proof of Theorem \ref{t6.8}.

We can prove invertibility of the factor-class $[a_G(x,\fD)]$ in the Kalkin factor-algebra of linear bounded operators $\mathscr{B}(L^p(G))/\mathscr{C}(L^p(G))$, because invertibility of the class in the Kalkin algebra is equivalent to the Fredholmness of the operator in the algebra of linear bounded operators $\mathscr{B}(L^p(G))$.

Since operators from localizing classes $\Delta_{x_0,\xi_0,\omega_0}$ have the form \eqref{e6.23}, their commutators with $a_G(x,\fD)$ are compact (cf. \eqref{e6.22}), i.e. the corresponding classes commute in the Kalkin algebra.

It is clear, that the system of localizing classes  $\left\{\Delta_{(x_0,\xi_0,\omega_0)}\right\}_{(x_0,\xi_0,\omega_0)\in\dG\times\R^n_\bullet\times\B_1(0)}$
is covering in the Kalkin algebra $\mathscr{B}(L^p(G))/ \mathscr{C}(L^p(G))$. We prove easily the local equivalences:
 \begin{eqnarray*}
[a_G(x,\fD)]\overset{\Delta_{(x_0,\xi_0,\omega_0)}}{\sim} [W^0_{a(x_0,\xi_0,\omega_0),G}]
     =[a(x_0,\xi_0,\omega_0)I], \qquad (x_0,\xi_0,\omega_0)\in\dG\times\R^n_\bullet\times\B_1(0).
 \end{eqnarray*}

According the main theorem on localization operator $a_G(x,\fD)$ is Fredholm if and only if the local representatives are locally invertible for all $x_0\in\dG$. But the local incvertibility of the class of multiplication by a constant $[a(x_0,\xi_0,\omega_0)I]$ coincides with its global invertibility and coincides with the ellipticity condition at the point $a(x_0,\xi_0,\omega_0)\not=0$. Thus, ellipticity means Fredholmness.
\end{proof}
%
\begin{remark}\label{r6.12} Observe that similarly to $C^0_p(\dG,{\bf P}{\bf C}_p(\R^n)),$ one  defines the class of symbols $C^0_p(\R^n_\bullet,{\bf P}{\bf C}_p(\R^n))$ by taking coefficients $a_k\in C(\R^n_\bullet)$ in \eqref{e6.21} and taking the approximation in the space $\mathscr{B}(L^p(\R^n))$.

A  similar result  to Theorem \ref{t6.11} holds for the operator
 \begin{eqnarray}\label{e6.24}
a_{\R^n}(x,D)\;:\;L^p(\R^n)\to L^p(\R^n)
 \end{eqnarray}
with a symbol from the class $C^0_p(\R^n_\bullet,{\bf P}{\bf C}_p(\R^n))$.

For the class of symbols $C^{s,r}_p(\dG,{\bf P}{\bf C}^r_p(\R^n))$ (and $C^{s,r}_p(\R^n_\bullet,{\bf P}{\bf C}^r_p(\R^n))$) obtained by closing the set of $\Psi$DOs \eqref{e6.21} with smooth coefficients $a_k\in C^\infty$ and symbols $(1+\xi^2)^{-r}g_k\in{\bf P}{\bf C}_p(\R^n)$ in the norm of $\mathscr{B}(\bH^s_p(G,d\mu_G),\bH^{s-r}_p(G,d\mu_G))$ (in the norm of $\mathscr{B}({H}_p^s(\R^n),\bH^{s-r}_p(\R^n))$), a theorem similar to Theorem \ref{t6.11} can be proved for the operator
 \begin{eqnarray*}
a_\group(x,\fD)\;:\;{H}_p^s(G)\to\bH^{s-r}_p(G)\\
({\rm for}\quad a_{\R^n}(x,D)\;:\;{H}_p^s(\R^n)\to \bH^{s-r}_p(\R^n)).
 \end{eqnarray*}
For the proof we first lift the operators to the space setting $ L^p \to L^p $ with the help of Bessel potentials and then apply Theorem \ref{t6.11}.
\end{remark}
\bibliographystyle{amsplain}

\end{document}